%% file: ex_article.tex
\documentclass[onefignum,onetabnum]{siamonline250211}
\usepackage{amssymb,amsfonts}
\usepackage{enumerate}
\usepackage{mathtools}
\usepackage{dsfont}
\usepackage{pgfplots}
\usepackage{enumitem}
\usepackage{graphicx}
\usepackage{ulem}
\usepackage{xcolor}
\usetikzlibrary{arrows.meta}
\definecolor{darkgreen}{rgb}{0.0, 0.5, 0.0} 
\crefname{equation}{}{}
\Crefname{equation}{}{}
\crefname{figure}{Figure}{Figures}
\Crefname{figure}{Figure}{Figures}
\crefname{section}{section}{sections}
\Crefname{section}{Section}{Sections}
\crefname{assumption}{Assumption}{Assumptions}
\crefname{definition}{Definition}{Definitions}
\crefname{theorem}{Theorem}{Theorems}
\crefname{proposition}{Proposition}{Propositions}
\crefname{corollary}{Corollary}{Corollaries}
\crefname{lemma}{Lemma}{Lemmas}
\crefname{problem}{Problem Setup}{Problem Setups}
\usepackage{todonotes}
\usetikzlibrary{calc}

\input{definition}

\newtheorem{assumption}{Assumption}

\DeclareMathOperator{\sech}{sech}
\def\arrowlen{1.2}

\allowdisplaybreaks[4]

\input{ex_shared}

\ifpdf
\hypersetup{
  pdftitle={Analysis of the long-term behavior of the ``Bando--follow-the-leader'' car-following model},
  pdfauthor={Fei Cao, Xiaoqian Gong, and Alexander Keimer}
}
\fi

\begin{document}

\title{Analysis of the long-term behavior of the ``Bando--follow-the-leader'' car-following model}

\author{Fei Cao\thanks{Department of Mathematics, Amherst College, 
220 S Pleasant St, Amherst, MA 01002, USA 
(fcao@amherst.edu, xgong@amherst.edu).}
\and Xiaoqian Gong\footnotemark[1]
\and Alexander Keimer\thanks{Department of Mathematics, University of Rostock, 
Ulmenstraße 69, 18057 Rostock, Germany 
(alexander.keimer@uni-rostock.de).}}

\maketitle

\begin{abstract}
In this article, we investigate the long-term behavior of the ``Bando--follow-the-leader'' car-following model, whose well-posedness and stability with respect to delay were analyzed in a recent work \cite{gong2023well}. We first establish the collision-free property of the model with \(N+1\in\N_{\geq2}\) vehicles over an infinite time horizon, assuming that the trajectory of the first vehicle is prescribed, by demonstrating the existence of a uniform strictly positive lower bound on the space headway between adjacent vehicles. 
Furthermore, assuming that the first vehicle travels at a constant velocity and \(N\in\N_{\geq1}\) vehicles follow it according to the Bando--follow-the-leader model on a single lane, our main results state that, with certain reasonable constraints imposed on the modeling parameters, all \(N\) following vehicles will eventually (i.e., when time goes to infinity) converge to the same headway and velocity with a globally exponential convergence rate. The analytical methods are based on Lyapunov functions and a perturbation argument. 
Numerical simulations are also provided to illustrate the obtained theoretical convergence guarantees.
\end{abstract}

\begin{keyword}
microscopic car-following model; collision-free; Bando--follow-the-leader model; dynamical systems; steady state
 \end{keyword}

 \begin{MSCcodes}
 93D05,93D23,34A12,34A34
 \end{MSCcodes}

\section{Introduction}
\label{sec:sec1}
\setcounter{equation}{0}

Mathematical traffic models can be classified into three types--\\
microscopic, mesoscopic, and macroscopic--depending on the scale at which they represent vehicular traffic. Comprehensive reviews of car-following models can be found in \cite{wang2023car, he2023microscopic, zhang2024car, ahmed2021review, fosureview}.

Microscopic car-following models describe the dynamics of individual vehicles in traffic flow. First-order models assume that drivers instantly adjust their speed to a desired value determined by the space headway, resulting in simpler dynamics without explicit acceleration. Second-order models assume that each driver adjusts their vehicle's acceleration in response to factors such as the space headway to the preceding vehicle, the driver’s desired velocity, and the relative velocity between adjacent vehicles, leading to more realistic but more complex behavior. Classical examples of car-following models include the optimal velocity model \cite{bando1995dynamical}, the follow-the-leader model \cite{gazis1961nonlinear}, the intelligent driver model \cite{treiber2000congested} and its variations \cite{albeaik2022limitations}, and the Gipps model \cite{gipps1981behavioural}. Rigorous analysis of the collision-free behavior of certain microscopic car-following models can also be found in \cite{gong2023well, 10552791}. To capture driver behavior more realistically, delayed car-following models have been developed by incorporating factors such as reaction delays, finite response times, and systematic sensor delays \cite{bando2000delay, yu2014new, gunter2020commercially, gong2023well}. 

Macroscopic traffic flow models treat traffic as a continuous fluid to describe aggregate features such as congestion, delay, and queue formation. These models are less computationally complex \cite{khan2019macroscopic}; examples include the Lighthill--Whitham--Richards model \cite{lighthill1961ii,richards1956shock}, the Payne--Whitham model \cite{payne1973freeway, whiteman1975linear}, two-phase models \cite{blandin2011general, colombo2003hyperbolic}, and the Aw--Rascle--Zhang model \cite{aw2000resurrection, zhang2002non, fan2017collapsed, chiarello2020micro}. Nonlocal macroscopic traffic flow models allow each vehicle to respond to conditions further downstream by considering an average traffic density over a region ahead, capturing the driver's anticipation of upcoming conditions \cite{keimer2017existence, karafyllis2022analysis, chiarello2020overview, aljamal2018comparison, chiarello2020non, bressan2020entropy}. These models are suited for large-scale, network-wide applications in which macroscopic characteristics such as speed, density, and flow are of primary interest.

Mesoscopic models bridge the gap between microscopic and macroscopic models by capturing the behavior of individual drivers in a probabilistic framework while maintaining a simplified depiction of driving dynamics \cite{de2019mesoscopic, gong2023mean,fornasier2014mean}. In these models, traffic is treated as a series of groups of vehicles or even individual vehicles, yet their evolution is governed by macroscopic flow laws. This approach preserves much of the computational efficiency of macroscopic models while providing a richer level of detail.

In this article, we study the well-posedness of a second-order microscopic car-following model---the Bando follow-the-leader (Bando-FtL) model introduced in \cite{stern2018dissipation}---on a single lane with (potentially) infinitely many vehicles, and we analyze its long-term behavior over an infinite time horizon. The Bando-FtL model combines two classical car-following models: the optimal velocity (Bando) model \cite{bando1995dynamical} and the follow-the-leader (FtL) model \cite{gazis1961nonlinear}. The Bando component captures the driver’s tendency to adjust toward an optimal velocity determined by the vehicle's space headway, while the FtL component captures their tendency to align the velocity of the vehicle with the one directly ahead.

The Bando-FtL model has been shown to reproduce stop-and-go waves on a ring road \cite{delle2019feedback}. Its well-posedness and that of a delayed extension were established in \cite{gong2023well}, including results on the existence of a strictly positive lower bound on vehicle headway and the nonnegativity of velocities over finite time horizons. A stochastic perturbation of the Bando-FtL model was analyzed in \cite{nick2022near}, whose authors proved collision-free behavior when the leading vehicle travels at a constant velocfity and showed that, for small noise levels, collisions become asymptotically unlikely over long time intervals. The linear stability of traffic comprising finitely many human-driven vehicles on a ring road, both without and with a single controlled vehicle, was examined in \cite{hayat2023dissipation} by linearizing around the steady state; the exponential stability of the steady state was proven under a proportional--integral control law. The classical stability and weak string stability of the Bando-FtL model on a ring road were investigated in \cite{giammarino2020traffic} by linearizing the dynamics around the equilibrium, both theoretically and numerically. In \cite{chou2024stability}, the authors developed an ODE model for vehicles on a ring road and evaluated its local stability using string stability criteria and an eigenvalue analysis of the linearized system around its equilibrium.

These studies naturally raise several questions: 
\begin{enumerate}
\item Does the Bando-FtL model remain well-posed when extended to an infinite number of human-driven vehicles over an unbounded time horizon? 
\item What is the steady state of the model for a finite number of vehicles traveling on a single lane?
\item How does this steady state extend as the number of vehicles tends to infinity?
\item What is the rate of convergence toward the steady state in both the finite- and infinite-vehicle settings? In particular, existing stability results are local, ensuring convergence only for initial conditions sufficiently close to equilibrium; can a global stability result be established?
\end{enumerate}

In this paper, we establish the existence of a strictly positive, uniform lower bound on each vehicle's space headway over an infinite time horizon for infinitely many vehicles under broad and general assumptions. Furthermore, we analyze the long-term behavior of the Bando-FtL model for finitely many vehicles by first proving the existence and uniqueness of an equilibrium and then employing classical energy methods, using suitable Lyapunov functions to demonstrate convergence to the steady state. We also provide a rigorous global estimate of the convergence rate under suitable parameter constraints. Via a perturbation argument, we extend the long-term behavior of the Bando-FtL model with finitely many vehicles to the case with infinitely many. 

The obtained well-posedness result for the Bando-FtL model with infinitely many vehicles on an infinite time horizon and the corresponding long-term behavior are also useful when investigating the equilibrium of macroscopic traffic oscillations \cite{nie2010equilibrium, yildirimoglu2014approximating}. Another related topic is the traveling wave solutions in traffic flow modeling \cite{ridder2018traveling, ikeda2024existence}.

The paper is organized as follows. In \cref{sec: model}, we describe the Bando-FtL model and review known results concerning its well-posedness over a finite time horizon. \Cref{sec: well_posedness} extends the well-posedness analysis to an infinite time horizon and establishes the existence and uniqueness of an equilibrium state. In \cref{sec:sec_two_vehicle}, we investigate the model's long-term behavior using energy methods for both finitely and infinitely many human-driven vehicles. \Cref{sec: conclusion} concludes the paper and outlines potential directions for future research.

\section{The Bando-FtL model}
\label{sec: model}
In this section, we recall the Bando-FtL model and its well-posedness over a finite time horizon. Let \(v_{\max} \in \R_{>0}\) be the maximal allowable velocity of each vehicle. We assume that the first vehicle's trajectory is predetermined and that the accelerations of the \(N\) following vehicles (hereafter, ``followers'') \(i \in \{2, \dots, N+1\}\) are governed by the Bando-FtL dynamics, which are defined as follows.

\begin{definition}[Bando-FtL acceleration]\label{defi:bando_acceleration}
Let \((\alpha,\beta)\in\R_{>0}^{2}\), and set
\[
A \coloneqq \big\{(a,b,c)\in\R^{3}:\ a>0\big\}
\]
and
\begin{equation}
    V\in C^{1}(\R_{\geq0};\R_{\geq0})\cap L^{\infty}(\R_{\geq0};\R_{\geq0})\ \text{ satisfying } V' \in (0, L] \text{ with } L>0.
    \label{eq:assumption_V}
\end{equation}
Then, the Bando-FtL acceleration is defined by the following function:
\begin{align}
    \textnormal{Acc}: \begin{cases}
         A&\rightarrow\R\\
    (a,b,c)&\mapsto  \alpha \left(V(a)-b\right)+\beta \tfrac{c-b}{a^{2}}.
        \end{cases}\label{defi:ACC}
\end{align}
We call \(V\) in \cref{defi:ACC} the optimal velocity function of the Bando-FtL acceleration.
\end{definition}
\begin{remark}[some comments on the Bando-FtL acceleration] Let \(v\), \(v_{\ell}\), and \(h\) be the follower's velocity, the leading vehicle's (hereafter, ``leader's'') velocity, and the follower's space headway, respectively. Given \((h, v, v_{\ell}) \in \mathbb{R}^3 \) with \(h>0\) and \((\alpha, \beta) \in \mathbb{R}_{>0}^2\), the Bando-FtL acceleration \(\textnormal{Acc}(h, v, v_{\ell})\) defined in \cref{defi:ACC} contains two terms: the ``Bando term'' \(\alpha (V(h)-v)\) and the ``follow-the-leader'' term \(\beta  \tfrac{v_{\ell}-v}{h^2}\). The Bando term allows the follower to match its velocity \(v\) to its optimal velocity \(V\), which is determined by its space headway \(h\), while the follow-the-leader term forces the follower to match its velocity \(v\) to its leader's velocity \(v_{\ell}\).
\end{remark}
\begin{remark}[an example for the optimal velocity function \(V\)]\label{rem:optimal_velocity} Assume that all vehicles have the same length, denoted by \(l \in \mathbb{R}_{>0}\). Assume also that the maximum value for the optimal velocity function \(V\) is the same as the maximal allowable velocity of all vehicles, \(v_{\max} \in \R_{>0}\). Let \(c , d_{s} \in \R_{>0}\) be given parameters. One example of an optimal velocity function is as follows:
\begin{equation}
    V(\cdot) = v_{\max} \tfrac{\tanh{(c\cdot-d_s)}+\tanh{(l + d_s)}}{1+\tanh{(l + d_s)}}.
\label{eq:optimal_velocity_function_example}
\end{equation}
If we take the follower's space headway \(h\) as the argument for the optimal velocity function \(V\), then \cref{eq:assumption_V} indicates that its optimal velocity \(V\) increases with respect to its space headway \(h\) and that the rate of change of \(V\)
with respect to \(h\) is bounded above by \(L>0\).

\begin{figure}[ht]
\centering
\begin{tikzpicture}[scale=0.75]
 \begin{axis}[
    xmin=0.5, xmax=10,
    ymin=0, ymax=38,
    xlabel={Space headway \(h\)},
    ylabel={Optimal velocity \(V(h)\)},
    grid=both,
    major grid style={dashed,gray!75},
    minor grid style={dotted,gray!75},
]
\addplot[domain=0:10,color=blue, samples=100,smooth] {30*(tanh(x-2.5)+tanh(7))/(1+tanh(7))};
\addplot[domain=10:0,color=red, samples=2,smooth,dotted, very thick]{30} node[]{\(v_{\max}\)};
\legend{\(V(h)\), \(v_{\max}\)};
\end{axis}
\end{tikzpicture}
\caption{One choice for the optimal velocity function \(V\) in \cref{eq:optimal_velocity_function_example} with parameters \(c=1\), \(l=4.5\), \(d_{\textnormal{s}}=2.5\), and \(v_{\max}=30\).}
\end{figure}
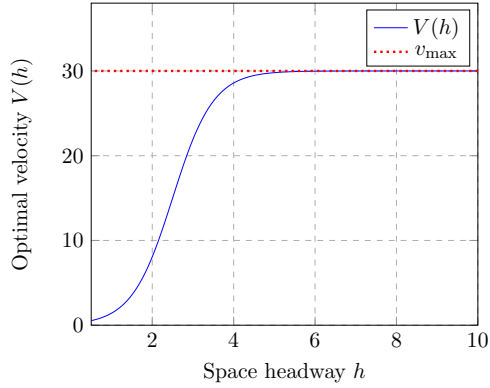 

\end{remark}

\begin{definition}
    The dynamics of the \(N+1\) vehicles are defined by the following coupled system of nonlinear ODEs:
\begin{equation}
\begin{aligned}
    \dot{x_i}(t)&=v_i(t), && i \in \{1, \dots, N+1\},~  t\in\R_{> 0}, \\
    v_1(t)&=v_{\ell}(t),  && t\in\R_{> 0}, \\
\dot{v}_i(t)&=\textnormal{Acc}\big(x_{i-1}(t)-x_i(t)-l,v_i(t),v_{i-1}(t)\big), &&  i \in \{2, \dots, N+1\},~  t \in\R_{> 0}, 
\end{aligned}
\label{system_dyn}
\end{equation}
where \((x_i(t), v_i(t))\) is the position--velocity vector of vehicle \(i \in \{1, \dots, N+1\}\) at time \(t\in \R_{>0}\). 
\end{definition}

\begin{definition}[a solution to the Bando-FtL system \eqref{system_dyn}]\label{defi:solution} Let \(T \in \R_{>0}\cup\{\infty\}\) be the considered time horizon, and suppose we are given the first vehicle's velocity \(v_{\ell} \colon \mathbb{R}_{\geq 0} \to (0, v_{\max}]\), with \( v_{\max} = 
\|V\|_{L^{\infty}(\mathbb{R}_{\ge 0})}
\). Let \(\big(\alpha, \beta, l \big)\in\R_{>0}^{3}\) be the parameters of \(\textnormal{Acc}\) as in \cref{defi:bando_acceleration}, and let the initial values \(({x}_{0},{v}_{0})\in\R^{N+1}\times\R^{N+1}_{\geq 0}\) be such that \[
x_{i-1, 0}-x_{i,0}-l>0,~  i \in \{2, \dots, N+1\}.\]
The mapping \(({x}, {v}) \in W^{2,\infty}((0,T); \R^{N+1}) \times W^{1,\infty}((0, T); \R_{\geq 0}^{N+1})\) is called a  solution of system \cref{system_dyn} if it satisfies the following integral equation:
\begin{align*}
    x_i(t)&=x_{i,0}+\int_0^t v_i(s) \dd s, && \forall  i \in \{1, \dots, N+1\}, \forall  t\in [0,T], \\
    v_i(t)&=v_{i,0}+\!\!\!\int_0^t \textnormal{Acc} \left(x_{i-1}(s)-x_i(s)-l,v_i(s),v_{i-1}(s)\right) \dd s, && \forall  i \in \{2, \dots, N+1\},~  \forall  t \in [0,T],\\
    v_1(t) &= v_{\ell}(t) && t\in[0,T].
\end{align*}

\end{definition}

The well-posedness for the Bando-FtL model over a finite time horizon \(T \in \R_{>0}\) has been established in \cite{gong2023well} recently, as follows.

\begin{theorem}\label{theo:existence_uniqueness_non_delayed}
Let \(T \in \R_{>0}\) be a finite time horizon. Let the assumptions in \cref{defi:solution} hold. Then there exists a unique solution \(({x},{v})\in W^{2,\infty}((0,T); \R^{N+1}) \times W^{1, \infty}((0,T); \R_{\geq 0}^{N+1})\) for system \cref{system_dyn} in the sense of \cref{defi:solution} over the finite time horizon \(T \in \R_{>0}\). In addition, there exists a minimum space headway \( d_{i, \min}(t)>0\) for each vehicle \(i \in \{2, \dots, N+1\}\) at any instant \(t \in [0, T]\). That is,
\begin{equation}
x_{i-1}(t)-x_i(t)-l\geq d_{i, \min}(t), \quad (t, i) \in [0, T] \times \{2, \dots, N+1\}.  \label{eq:d_min}
\end{equation} Furthermore, the solution \(({x},{v})\in W^{2,\infty}((0,T); \R^{N+1}) \times W^{1, \infty}((0,T); \R_{\geq 0}^{N+1})\) to system \cref{system_dyn} has a nonnegative bounded velocity (from above and below) and a bounded acceleration.
\end{theorem}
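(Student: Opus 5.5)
The plan is to run a local Cauchy--Lipschitz argument and extend it to \([0,T]\) with a priori bounds, proceeding vehicle by vehicle. Write \(h_i \coloneqq x_{i-1}-x_i-l\). The right-hand side of \cref{system_dyn} is locally Lipschitz on the open set \(\{h_i>0\ \forall i\}\), since \(V\in C^1\) and \(1/a^2\) is smooth for \(a>0\). The leader velocity \(v_\ell\) enters only as a bounded measurable forcing through \(x_1\) and \(v_1\), so a Carathéodory version of Picard--Lindelöf gives a unique local solution in the sense of \cref{defi:solution}. The solution extends to \([0,T]\) unless it leaves every compact subset of the domain. It therefore suffices to prove, on any interval of existence \([0,\tau)\) with \(\tau\le T\), a uniform upper bound on \(|v_i|\) and a uniform positive lower bound on \(h_i\).

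\textbf{Velocity bounds.} I would induct on \(i\), with hypothesis \(0\le v_{i-1}\le v_{\max}\); this holds for \(i=1\) by assumption. Suppose \(v_i=0\) at some time. Then \(\dot v_i=\alpha V(h_i)+\beta v_{i-1}/h_i^2\ge 0\), because \(V\ge0\). Suppose instead \(v_i=\bar v\ge v_{\max}\), with \(\bar v\coloneqq\max(v_{\max},\max_j v_{j,0})\). Then \(\dot v_i\le \alpha(v_{\max}-\bar v)+\beta(v_{i-1}-\bar v)/h_i^2\le 0\). A standard comparison/invariance argument, applied to \((v_i)_-\) and \((v_i-\bar v)_+\), shows that \([0,\bar v]\) is invariant. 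This yields nonnegativity and boundedness of the velocities.

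\textbf{Headway lower bound (main obstacle).} For each \(i\),
\[
\dot h_i=v_{i-1}-v_i,
\]
and, using \(\beta(v_{i-1}-v_i)/h_i^2=-\beta\,\tfrac{\dd}{\dd t}(1/h_i)\),
\[
\dot v_i=\alpha(V(h_i)-v_i)-\beta\tfrac{\dd}{\dd t}\Big(\tfrac1{h_i}\Big).
\]
Integrating from \(0\) to \(t\) gives
\[
\tfrac{\beta}{h_i(t)}=\tfrac{\beta}{h_i(0)}+v_{i,0}-v_i(t)+\alpha\int_0^t\big(V(h_i)-v_i\big)\dd s\le \tfrac{\beta}{h_i(0)}+v_{i,0}+\alpha v_{\max}t.
\]
The inequality uses only \(0\le v_i\), \(V\le v_{\max}\), and \(v_i\ge 0\). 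Hence
\[
h_i(t)\ge d_{i,\min}(t)\coloneqq \beta\big(\beta/h_i(0)+v_{i,0}+\alpha v_{\max}t\big)^{-1}>0,
\]
which is \cref{eq:d_min}. This identity is the key step: the follow-the-leader term is an exact time derivative, which is what prevents \(h_i\) from hitting zero. The lower bound degrades linearly in \(t\), but that is harmless for finite \(T\).

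\textbf{Acceleration bound and conclusion.} With \(h_i\ge d_{i,\min}(T)>0\) and the velocities bounded, \(|\textnormal{Acc}|\le \alpha(v_{\max}+\bar v)+\beta\bar v/d_{i,\min}(T)^2\), so the acceleration is bounded. The solution therefore stays in a compact subset of the domain and extends to all of \([0,T]\). Uniqueness follows from the local Lipschitz property on that compact set via Gronwall. The regularity \(x\in W^{2,\infty}\) and \(v\in W^{1,\infty}\) follows from the bounds on velocity and acceleration.

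The velocity invariance and the headway estimate are coupled: the headway estimate needs \(v_i\ge 0\), and the velocity argument needs \(h_i>0\). I would handle this by arguing on the maximal existence interval, where \(h_i>0\) holds by definition.
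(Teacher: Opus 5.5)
The paper does not prove this theorem. It imports it from \cite{gong2023well}, noting only (in the proof of \cref{theorem: h_min}) that a Picard--Lindel\"of-type argument is used there, and that the resulting \(d_{i,\min}(t)\) tends to \(0\) as \(t\to\infty\). Your proposal is a correct, self-contained proof consistent with both remarks.

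The useful comparison is with the argument the paper does write out for the uniform-in-time bound in \cref{theorem: h_min}. That argument integrates the follow-the-leader term as an exact derivative, just as you do (compare \eqref{eq:key_inequality}). In addition, in the region \(h\le V^{-1}(v_{\min})\) it uses \(V(h)\le v_\ell\) to bound the Bando term \(\alpha(V(h)-v)\) above by \(\alpha(v_\ell-v)=\alpha\dot h\). This turns \(\alpha\int_0^t(V(h)-v)\,\dd s\) into the time-independent quantity \(\alpha(h(t)-h_0)\), which yields a bound uniform in \(t\). The price is the extra hypotheses \(v_\ell\ge v_{\min}>V(0)\) and \(v_0\le v_{\max}\). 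Your cruder estimate \(\alpha\int_0^t(V(h_i)-v_i)\,\dd s\le\alpha v_{\max}t\) needs only \(v_i\ge0\) and \(V\le v_{\max}\). That is exactly why it works under the weak hypotheses of \cref{defi:solution} (arbitrary \(v_{i,0}\ge0\), any \(v_\ell\) with values in \((0,v_{\max}]\)), and also why your bound decays like \(1/t\).

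Three small repairs are needed.

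\textbf{Induction hypothesis.} State it as \(0\le v_{i-1}\le\bar v\), not \(0\le v_{i-1}\le v_{\max}\). For \(i\ge3\) you have only shown \(v_{i-1}\le\bar v\), and that is all your computation at \(v_i=\bar v\) actually uses. The base case holds because \(v_1=v_\ell\le v_{\max}\le\bar v\).

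\textbf{Regularity of the first vehicle.} Your bounds give \(W^{2,\infty}\times W^{1,\infty}\) regularity and bounded acceleration only for the followers. For the first vehicle these require \(v_\ell\in W^{1,\infty}\). The statement leaves this implicit (the paper later differentiates \(v_\ell\)), so add it as a hypothesis rather than claim to derive it.

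\textbf{Uniqueness and compactness.} Say explicitly that uniqueness holds in the whole class of \cref{defi:solution}. That class imposes \(v\ge0\), so your headway estimate applies to any competing solution, and both solutions stay in a common compact set where Gr\"onwall applies. For that compact set you also need the trivial upper bound \(h_i(t)\le h_i(0)+\bar v\,t\).
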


For the minimum space headway reported in \cref{theo:existence_uniqueness_non_delayed} (extracted from the recent work \cite{gong2023well}), \(d_{i, \min}(t) \to 0\) as \(t \to \infty\) for \(i \in \{2, \dots, N\}\), so this result cannot be used to analyze the long-term behavior of the Bando-FtL model. In the next section, we generalize it and demonstrate the well-posedness of the model over an infinite time horizon.

\section{The well-posedness and equilibrium of the Bando-FtL model on an infinite time horizon}
\label{sec: well_posedness}

In this section, we study the well-posedness of the Bando-FtL model, starting with the two-vehicle case in which the leader’s trajectory is given and the follower evolves according to \cref{defi:bando_acceleration}. We also analyze the equilibrium of the Bando-FtL model with \(N+1\) vehicles when the leader drives at a constant velocity.

\subsection{The well-posedness of the Bando-FtL model on an infinite time horizon for two vehicles}
We impose the following assumptions on the leader's trajectory and initial velocity. 
\begin{assumption}
Assume that the leader's velocity \(v_{\ell}(t) \in [v_{\min}, v_{\max}]\) is predetermined for \(t \in \R_{\geq 0}\), with \(v_{\max} = \|V\|_{L^{\infty}(\mathbb{R}_{\ge 0})} \geq v_{\min} > V(0)\). Denote by \(x_{\ell}(0) = x_{\ell, 0} \in \R\) the leader's initial position and by \(v_{\ell}(0) = v_{\ell ,0} \in [v_{\min}, v_{\max}]\) the leader's initial velocity. 
\label{assum_initial_leading}
\end{assumption}
Note that the leader's trajectory satisfies
\begin{equation}
\begin{aligned}
x_{\ell}(t) & =x_{\ell,0} + \int_0^t v_{\ell}(s) \dd s , & t \in \R_{\geq 0},
\end{aligned}
\label{eqn: leading}
\end{equation}
 where \(x_{\ell}(t)\) is the position of the leader at time \(t \in \R_{\geq 0}\). An assumption is also made for the follower's initial data.

\begin{assumption}
The follower has a nonnegative initial velocity and a strictly positive initial space headway. Specifically, the initial condition
\(
(x(0), v(0)) = (x_0, v_0) \in \mathbb{R} \times \mathbb{R}_{\ge 0}
\)
satisfies
\(
h(0) = h_0 \coloneqq x_{\ell,0} - x_0 - l > 0
\).

\label{assum_initial_following}
\end{assumption}

\begin{definition}
    The dynamics of the follower are governed by the following system of nonlinear ODEs:
\begin{equation}
\begin{aligned}
            \dot{x}(t) &= v(t), & t \in \R_{>0},\\
            \dot{v}(t) &= \alpha \left(V(h(t))-v(t)\right)+\beta \tfrac{v_{\ell}(t)-v(t)}{h^2(t)}, & t \in \R_{> 0}.
\end{aligned}\label{dyn_2_vehicle}
\end{equation}
 Here, \(x(t)\) and \(v(t)\) are the position and velocity of the follower at time \(t \in \R_{\geq 0}\), respectively, \(l \in \R_{>0}\) is the vehicle length, and the space headway of the follower is \(h \coloneqq x_{\ell} -x-l\). 
\end{definition}

The existence of a unique solution to \cref{eqn: leading,dyn_2_vehicle} over any finite time horizon, with the initial conditions given in \cref{defi:solution}, was established in \cite{gong2023well}. To prove the well-posedness of the Bando-FtL model on an infinite time horizon in the two-vehicle case, it suffices to show that the follower's space headway admits a uniform positive lower bound, since the right-hand side of the second equation in \cref{dyn_2_vehicle} is globally Lipschitz whenever the intervehicle distance is bounded away from zero. We therefore need the following result.

\begin{theorem}[existence of a uniform minimum distance over a small time interval \(T\)] Consider the dynamics of the leader and follower governed by  \cref{eqn: leading,dyn_2_vehicle} and suppose that \cref{assum_initial_leading,assum_initial_following} hold. Then the follower's space headway \(h\) is uniformly bounded from below on $[0,T]$ for some small time interval $T > 0$.
To be more precise, we have
\[ h(t) \geq h_{\min} > 0,    \forall t \in [0,T],\]
where
\begin{equation}
    h_{\min} \coloneqq \min\left\{\tfrac{A_0 + \sqrt{A_0^2 + 4 \alpha \beta}}{2 \alpha}, h_0, V^{-1}(v_{\min})\right\} >0,
    \label{eqn_h_min}
\end{equation}
with \(A_0 \coloneqq  -v_{\max} + \alpha h_0 - \tfrac{\beta}{h_0} \).
\label{theorem: h_min}
\end{theorem}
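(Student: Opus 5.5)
The plan is to combine an exact integral identity for the velocity with a first-descent (contradiction) argument. The identity comes from observing that the follow-the-leader term is a total derivative. Since \(\dot h = v_\ell - v\), the second equation in \cref{dyn_2_vehicle} can be written as
\[
\dot v = \alpha\big(V(h)-v\big) - \beta\,\tfrac{\dd}{\dd t}\Big(\tfrac{1}{h}\Big).
\]
We also have \(-\alpha v = -\alpha v_\ell + \alpha \dot h\). Integrating over \([s,t]\) therefore gives the identity
\[
v(t) = v(s) + \alpha\int_s^t \big(V(h(\tau)) - v_\ell(\tau)\big)\dd\tau + \alpha\big(h(t)-h(s)\big) + \tfrac{\beta}{h(s)} - \tfrac{\beta}{h(t)}.
\]
This holds on any interval where the solution exists with \(h>0\), which \cref{theo:existence_uniqueness_non_delayed} provides on every finite horizon. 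By \cref{assum_initial_leading}, \(v_\ell \ge v_{\min} = V(V^{-1}(v_{\min}))\), and \(V\) is increasing. Hence the integrand is nonpositive whenever \(h(\tau)\le V^{-1}(v_{\min})\). This is why \(V^{-1}(v_{\min})\) enters \(h_{\min}\).

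Set \(h^\ast \coloneqq \min\{h_0, V^{-1}(v_{\min})\}\) and argue by contradiction. Suppose \(h(t_1)<h_{\min}\) for some \(t_1\in[0,T]\). Since \(h_{\min}\le h^\ast\), continuity gives a time \(t\le t_1\) at which \(h\) attains its minimum over \([0,t_1]\), with \(h(t)<h_{\min}\). There we have \(\dot h(t)\le 0\), i.e. \(v(t)\ge v_\ell(t)\ge v_{\min}>0\); in particular \(v(t)\ge 0\). Let \(s\) be the last time before \(t\) with \(h(s)=h^\ast\), or \(s=0\). On \([s,t]\) we have \(h\le h^\ast\), so the integral term is nonpositive. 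Using the a priori bound \(v(s)\le v_{\max}\), we obtain
\[
0 \le v_{\max} + \alpha h(t) - \alpha h(s) + \tfrac{\beta}{h(s)} - \tfrac{\beta}{h(t)}.
\]
The bound \(v\le v_{\max}\) follows from a comparison argument: when \(v=v_{\max}\), both \(V(h)-v\le 0\) and \(v_\ell - v \le 0\), so \(\dot v \le 0\). Multiplying by \(h(t)>0\) shows that \(y=h(t)\) satisfies \(\alpha y^2 - A\,y - \beta \ge 0\) with \(A = -v_{\max}+\alpha h(s) - \beta/h(s)\). Since the constant term is \(-\beta<0\), the quadratic has exactly one positive root, and \(y\) must lie at or beyond it. In the case \(s=0\), \(h(s)=h_0\), this says exactly
\[
h(t) \ge \tfrac{A_0+\sqrt{A_0^2+4\alpha\beta}}{2\alpha} \ge h_{\min},
\]
which is the desired contradiction.

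The main obstacle is the case \(s>0\), where the restart value \(h(s)=V^{-1}(v_{\min})<h_0\) replaces \(h_0\). The map \(y\mapsto \alpha y - \beta/y\) is increasing, so a smaller restart value lowers \(A\) and hence lowers the positive root. The resulting bound is then not obviously comparable to the one built from \(A_0\). This is where I would use the smallness of \(T\). On a sufficiently short interval, the velocity bound gives \(|\dot h|\le v_{\max}\), so \(h\) cannot decrease from \(h_0\) by more than \(v_{\max}T\). Choosing \(T\) small therefore keeps \(h\) from ever leaving and re-entering the region below \(h^\ast\) after time \(0\). The only relevant descent then starts at \(s=0\) with \(h(s)=h_0\), and the \(A_0\)-based estimate above applies. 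If a longer horizon were desired, I would instead iterate the argument with the restart value tracked explicitly.
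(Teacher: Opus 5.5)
Your proof is correct. Its analytic core is the same as the paper's. The inequality the paper gets by integrating \(\ddot h\) over a window where \(h\le V^{-1}(v_{\min})\) is your integral identity with the nonpositive term \(\alpha\int_s^t(V(h)-v_\ell)\) dropped. Both arguments end in the same quadratic \(\alpha y^2-A_0y-\beta\).

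What differs is the organization. The paper splits into cases by the sign of \(v_0-v_{\ell,0}\) and by \(h_0\) versus \(V^{-1}(v_{\min})\). It runs a barrier argument (\(\dot h\ge0\) below the positive root) in its third case, and reduces the other two cases to it by restarting the clock at \(t_1\) or \(t_2\). That restart silently replaces \(h_0\) by \(h(t_1)\) or \(h(t_2)\), which is exactly the dependence you flag. Your minimum-point/last-crossing formulation makes this explicit, and it buys more than you claim:
\begin{itemize}
\item If \(h_0\le V^{-1}(v_{\min})\), every last crossing of the level \(h_0\) restarts with the same \(A_0\). Your argument then gives \(h(t)\ge h_{\min}\) for all \(t\ge0\), with no smallness of \(T\). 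Your sentence that the only relevant descent starts at \(s=0\) is not justified by the smallness argument, since \(h\) may rise above \(h_0\) and come back. It is also not needed.
\item If \(h_0>V^{-1}(v_{\min})\), choosing \(T<(h_0-V^{-1}(v_{\min}))/v_{\max}\) gives \(h>V^{-1}(v_{\min})\ge h_{\min}\) on \([0,T]\) directly. The \(A_0\)-estimate is not used there at all.
\end{itemize}
The literal statement already follows from continuity: \(v_{\max}>0\) makes the inequality in the remark after the theorem strict, so \(h_{\min}<h_0\).

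The real payoff of your route is a bound valid for all \(t\ge0\). Run it with restart level \(\min\{h_0,V^{-1}(v_{\min})\}\) (your \(h^\ast\)) at every crossing. You get \(h(t)\) bounded below by the positive root of \(\alpha y^2-A(h^\ast)y-\beta\), where \(A(y)=-v_{\max}+\alpha y-\beta/y\). This equals \(h_{\min}\) when \(h_0\le V^{-1}(v_{\min})\) and is smaller otherwise, but unlike the paper's restart step it is actually proved.

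One caveat is shared with the paper. Your comparison argument for \(v\le v_{\max}\) needs \(v_0\le v_{\max}\), which is not in the stated assumption on the follower's initial data, so state it explicitly.
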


\begin{proof}
As has been shown in \cite{gong2023well} via a Picard--Lindel\"{o}f type argument, for each \(T^{*} \in \mathbb{R}_{>0}\), there is a unique classical solution to \cref{eqn: leading,dyn_2_vehicle} with the initial conditions as in \cref{defi:solution}. As $v_0 \leq v_{\max}$ thanks to our assumptions, it follows that the velocity of the follower has a uniform-in-time upper bound \( v(t) \leq v_{\max}\) for all $t\geq 0$ because of the stated dynamics in \cref{dyn_2_vehicle}.

Let us consider the difference in the change of velocities on the time horizon \([0, T^{*}]\). For any \(t \in [0, T^{*}]\),
\begin{align*}
\dot{v}_{\ell}(t) - \dot{v}(t) & = \dot{v}_{\ell}(t) - \alpha  \left(V(h(t) - v(t)\right) - \beta \tfrac{v_\ell(t) - v(t)}{h^2(t)}.
\end{align*}
From \cref{eq:assumption_V}, the optimal velocity function \(V\) is strictly monotone and thus invertible. In particular, by Taylor expanding the optimal velocity function \(V(h(t))\) at \(h_{\ell}(t) \coloneqq V^{-1}(v_{\ell}(t))\), we have that there exists \(\xi_{v_{\ell}}(t)\) lying in between \(h(t)\) and \(h_\ell(t)\) such that \[V(h(t)) = V(h_\ell(t)) + V'(\xi_{v_{\ell}}(t)) (h(t) - h_\ell(t)) = v_\ell(t) + V'(\xi_{v_{\ell}}(t)) (h(t) - h_\ell(t)).\]
Thus, for any \(t \in [0, T^{*}]\),
\begin{equation}\label{eq:key_equality}
        \begin{aligned}
         \ddot{h}(t) = \dot{v}_{\ell}(t) - \dot{v}(t) & = \dot{v}_{\ell}(t) - \alpha \left(v_\ell(t) + V'(\xi_{v_{\ell}}(t))(h(t) - h_\ell(t))  - v(t)\right) - \beta \tfrac{v_\ell(t) - v(t)}{h^2(t)}\\
        & = \dot{v}_{\ell}(t) - \alpha \left(v_\ell(t) - v(t)\right) - \alpha V'(\xi_{v_{\ell}}(t))(h(t) - h_\ell(t)) - \beta\tfrac{v_\ell(t) - v(t)}{h^2(t)}.
        \end{aligned}
        \end{equation}
We now consider three different cases separately:
\begin{enumerate}

\item If $v_0 < v_{\ell,0}$, then $\dot{h}(0) > 0$. By continuity, there exists some $t_1 > 0$ such that $h(t) \geq \min\{h_0,V^{-1}(v_{\min})\}$ whenever $t \in (0,t_1)$. Now, if $t_1 = \infty$, then there is nothing to prove, so we may assume without loss of generality that $t_1 < \infty$. In this case, we also have $h(t_1) < \min\{h_0,V^{-1}(v_{\min})\} \leq V^{-1}(v_{\min})$ and $v(t_1) \geq v_\ell(t_1)$. 
\label{item:1}

\item If $v_0 \geq v_{\ell,0}$ and $h_0 \geq V^{-1}(v_{\min})$, so that $\min\{h_0,V^{-1}(v_{\min})\} = V^{-1}(v_{\min})$, then by continuity, we can conclude that there exists some $t_2 > 0$ such that $h(t) \in [\tfrac{1}{2}V^{-1}(v_{\min}), h_0]$ whenever $t \in (0,t_2)$. Without loss of generality, we can further assume that $t_2 < \infty$ together with $h(t_2) = \tfrac{1}{2} V^{-1}(v_{\min})$ and $v(t_2) \geq v_\ell(t_2)$. 

\label{item:2}
\item In the scenario where $v_0 \geq v_{\ell,0}$ and $h_0 < V^{-1}(v_{\min})$, we will show the existence of a universal positive constant $\underbar{h} > 0$ depending solely on $\alpha$, $\beta$, $h_0$, and the specific choice of the optimal velocity function $V$ such that $h(t) \geq \underbar{h}$ whenever $t\in  [0,\tau)$, for some $\tau > 0$. In particular, since the uniform-in-time lower bound $\underbar{h}$ derived in this case does not depend on $v_0$, we can simply reuse the starting times from the two cases above (i.e., view $t_1$ or $t_2$ as the initial time $0$) to reduce them to this case. To proceed, we notice that the assumptions $v_0 \geq v_{\ell,0}$ and $h_0 < V^{-1}(v_{\min})$ guarantee the existence of some $\tau > 0$ such that $h(t) \leq V^{-1}(v_{\min})$ for all $t \in [0,\tau)$. Since \(V' > 0\) by assumption \cref{eq:assumption_V}, it follows that
\[
V'\!\bigl(\xi_{v_{\ell}}(t)\bigr)\bigl(h(t) - h_\ell(t)\bigr) \leq 0
\qquad \forall t \in [0, \tau).
\]
Therefore, 
\begin{align}
    \ddot{h}(t) \geq \dot{v}_{\ell}(t) - \alpha \left(v_\ell(t) - v(t)\right)  - \beta\tfrac{v_\ell(t) - v(t)}{h^2(t)}\qquad \forall t \in [0, \tau).
    \label{eq:key_equality_1}
\end{align}
 Consequently, upon integration, we can deduce from \eqref{eq:key_equality_1} that
\begin{equation}
\label{eq:key_inequality}
    \begin{aligned}
    \dot{h}(t) & = v_{\ell}(t) - v(t)    \\
    & \geq v_{\ell, 0} - v_0 + \int_0^t  \dot{v}_{\ell}(s) \dd s  - \alpha \int_0^t \left(v_\ell(s) - v(s)\right) \dd s  -\beta \int_0^t \tfrac{v_\ell(s) - v(s)}{h^2(s)} \dd s \\
    &= v_{\ell}(t) - v_0 - \alpha (h(t)-h_0) + \tfrac{\beta}{h(t)} - \tfrac{\beta}{h_0}\\
     & \geq -v_{\max} - \alpha (h(t)-h_0) + \tfrac{\beta}{h(t)} - \tfrac{\beta}{h_0}\\
     &= \tfrac{1}{h(t)}\left(-\alpha h^2(t) + \left(-v_{\max} + \alpha h_0 - \tfrac{\beta}{h_0}\right) h(t) + \beta\right)
    \end{aligned}
    \end{equation}
    for all $t \in [0,\tau)$. Therefore, it holds that $h(t) \geq \min\{h_0,\underbar{h}\}$ for all $t \in [0,\tau)$, in which 
    \[\underbar{h} \coloneqq \tfrac{1}{2 \alpha} \left(-v_{\max} + \alpha h_0 - \tfrac{\beta}{h_0} + \sqrt{\left(-v_{\max} + \alpha h_0 - \tfrac{\beta}{h_0}\right)^2 + 4 \alpha \beta}\right) > 0.\]
    Indeed, since the right-hand side of \eqref{eq:key_inequality} contains a quadratic polynomial in $h(t)$ with $-\alpha h^2(t) + \left(-v_{\max} + \alpha h_0 - \tfrac{\beta}{h_0}\right) h(t) + \beta \geq 0$ whenever $h(t) \in [0,\underbar{h}]$, we deduce that $\dot{h}(t) \geq 0$ whenever $h(t)\leq \underbar{h}$. Thus, $h(t) \geq \min\{h_0,\underbar{h}\}$ for all $t \in [0,\tau)$.
\label{item:3}
\end{enumerate}
Putting the results from these three cases together ensures the existence of some (a priori finite) time $T=\min\{t_1, t_2, \tau\} > 0$ such that $h(t) \geq \min\{V^{-1}(v_{\min}), h_0, \underbar{h}\}$ for all $t \in [0,T]$. In particular, notice that \(h_{\min} \coloneqq \min\left\{\tfrac{A_0 + \sqrt{A_0^2 + 4 \alpha \beta}}{2 \alpha}, h_0, V^{-1}(v_{\min})\right\} \) is independent of time.

\end{proof}
\begin{remark}[further discussion on \(h_{\min}\)]
We observe that
\[
\tfrac{A_0 + \sqrt{A_0^2 + 4 \alpha \beta}}{2 \alpha} \leq h_0,
\quad \text{where} \quad
A_0 \coloneqq -v_{\max} + \alpha h_0 - \tfrac{\beta}{h_0}.
\]
This directly implies that the uniform lower bound on the follower’s space headway, as given in \cref{eqn_h_min}, satisfies
\begin{equation}
\label{eqn_h_min_simplied}
h_{\min} 
= \min\!\left\{
\tfrac{A_0 + \sqrt{A_0^2 + 4 \alpha \beta}}{2 \alpha},
  V^{-1}(v_{\min})
\right\}
\leq h_0.
\end{equation}
This can be justified by the following straightforward computation:
\begin{align*}
\tfrac{A_0 + \sqrt{A_0^2 + 4 \alpha \beta}}{2 \alpha}
&= \tfrac{-v_{\max} + \alpha h_0 - \tfrac{\beta}{h_0}
+ \sqrt{\!\bigl(-v_{\max} + \alpha  h_0 - \tfrac{\beta}{h_0}\bigr)^2 + 4 \alpha \beta}}{2  \alpha}\\[1ex]
&= \tfrac{-v_{\max} + \alpha h_0 - \tfrac{\beta}{h_0}
+ \sqrt{v_{\max}^2 + \alpha^2 h_0^2 + \tfrac{\beta^2}{h_0^2}
- 2 \alpha v_{\max} h_0 + 2 \beta \tfrac{v_{\max}}{h_0}
+ 2 \alpha  \beta}}{2  \alpha}\\[1ex]
&\leq \tfrac{-v_{\max} + \alpha  h_0 - \tfrac{\beta}{h_0}
+ \sqrt{v_{\max}^2 + \alpha^2  h_0^2 + \tfrac{\beta^2}{h_0^2}
+ 2 \alpha v_{\max} h_0 + 2 \beta \tfrac{v_{\max}}{h_0}
+ 2 \alpha  \beta}}{2 \alpha}\\[1ex]
&= \tfrac{-v_{\max} + \alpha  h_0 - \tfrac{\beta}{h_0}
+ \sqrt{\!\bigl(v_{\max} + \alpha h_0 + \tfrac{\beta}{h_0}\bigr)^2}}{2 \alpha}
= h_0.
\end{align*}
Moreover, we note that this uniform minimum distance \(h_{\min}\) remains constant over time and does not depend on the velocities of either the follower or its leader. Instead, it is entirely determined by the model parameters \(\alpha\), \(\beta\), \(v_{\max}\), and \(v_{\min}\) and the initial space headway \(h_0\) of the follower. This property is crucial for extending the existence of a minimum distance from a small time interval to arbitrarily large intervals.
\label{rem_h_min}
\end{remark}

\begin{corollary}[existence of a uniform minimum distance over an infinite time horizon]
Suppose \cref{assum_initial_leading,assum_initial_following} hold. Then the follower's space headway \(h\) admits a uniform positive lower bound that persists over an infinite time horizon.
\label{coro:cor1}
\end{corollary}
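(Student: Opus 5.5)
Our plan is to extend \cref{theorem: h_min} from a short interval to all of \(\R_{\geq 0}\) by a continuation (restart) argument, exploiting the fact stressed in \cref{rem_h_min}: the bound \(h_{\min}\) depends only on \(\alpha,\beta,v_{\max},v_{\min}\), \(V\), and the headway at the starting time, and not on any velocity. The main risk in a naive iteration is that restarting at a later time \(t_k\) with headway \(h(t_k)\) would produce a new bound built from \(h(t_k)\) instead of \(h_0\), and these bounds could decrease to zero. We avoid this by using a monotonicity property of the lower bound as a function of the initial headway and a barrier-type argument.

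\emph{Step 1 (monotonicity).} Set \(g(s)\coloneqq \tfrac{A(s)+\sqrt{A(s)^2+4\alpha\beta}}{2\alpha}\) with \(A(s)=-v_{\max}+\alpha s-\beta/s\). Then \(g(s)\) is the positive root of \(-\alpha h^2+A(s)h+\beta=0\). Since \(A\) is strictly increasing in \(s\), \(g\) is increasing in \(s\). We also have \(g(s)\le s\) by \cref{rem_h_min}, and \(g(s)>0\) for every \(s>0\).

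\emph{Step 2 (a time-independent candidate).} Define
\[
h_* \coloneqq \min\{g(h_0),\,V^{-1}(v_{\min})\}>0,
\]
which equals \(h_{\min}\) from \cref{eqn_h_min_simplied}. The claim is that \(h(t)\ge h_*\) for all \(t\ge0\). Suppose not, and let \(\bar t\) be a time where \(h(\bar t)<h_*\). Let \(s_0\in[0,\bar t)\) be the last time before \(\bar t\) at which \(h\ge V^{-1}(v_{\min})\); if there is none, take \(s_0=0\).

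\emph{Step 3 (integrated inequality on the final excursion).} On \((s_0,\bar t]\) we have \(h< V^{-1}(v_{\min})\le V^{-1}(v_\ell)\), so the sign argument of case~\ref{item:3} applies, and \cref{eq:key_equality_1} holds there. Integrating from \(s_0\) as in \cref{eq:key_inequality} gives
\[
\dot h(t)\ge -v_{\max}-\alpha(h(t)-h(s_0))+\tfrac{\beta}{h(t)}-\tfrac{\beta}{h(s_0)}
\]
on that interval. This uses \(v(s_0)\le v_{\max}\) and \(v_\ell\ge v_{\min}>0\); the velocity bound \(v\le v_{\max}\) is uniform in time, as shown in the proof of \cref{theorem: h_min}. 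Consequently \(h\) cannot drop below \(\min\{h(s_0),g(h(s_0))\}=g(h(s_0))\) on \((s_0,\bar t]\).

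Now split into the two possible origins of \(s_0\). If \(s_0=0\), then \(h(s_0)=h_0\) and the bound is \(g(h_0)\ge h_*\). If \(s_0>0\), then \(h(s_0)=V^{-1}(v_{\min})\), and the bound is \(g(V^{-1}(v_{\min}))\). Either case contradicts \(h(\bar t)<h_*\), provided the final constant is defined as
\[
\min\{g(h_0),\,g(V^{-1}(v_{\min})),\,V^{-1}(v_{\min})\}.
\]
So we enlarge the minimum to include this extra term. It is still independent of time and velocities.

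\emph{Step 4 (global existence).} With a uniform positive lower bound on \(h\), the right-hand side of \cref{dyn_2_vehicle} is globally Lipschitz along the trajectory. The finite-horizon solutions from \cite{gong2023well} therefore extend uniquely to \(\R_{\geq0}\), and the bound persists for all time.

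The main obstacle is Step~3: we must justify the integrated inequality starting from an arbitrary time \(s_0\) rather than \(0\), with the unknown \(v(s_0)\) controlled only through \(v_{\max}\). This is why the bound has to be independent of the initial velocity. We must also check that the case split of \cref{theorem: h_min} (the case \(v_0<v_{\ell,0}\)) is absorbed by choosing \(s_0\) as a last exit time.
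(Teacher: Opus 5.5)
Your proof is correct, and it takes a different and more careful route than the paper. The paper argues by contradiction at the first time $T$ at which $h$ reaches $h_{\min}=\min\{g(h_0),V^{-1}(v_{\min})\}$. There it invokes the claim from the short-time proof that $\dot h\ge 0$ whenever $h\le g(h_0)$. That claim was obtained by integrating from time $0$ under the standing condition $h\le V^{-1}(v_{\min})$ on the whole interval. The restart step in that proof also keeps $h_0$ in place of the headway at the restart time, which is exactly the ``naive iteration'' risk you flag.

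In fact the paper's constant can fail. Take $v_\ell\equiv v_{\min}$, $v_0=v_{\max}$, and $h_0$ so large that $g(h_0)>V^{-1}(v_{\min})$, so that $h_{\min}=V^{-1}(v_{\min})$. While $h>V^{-1}(v_{\min})$, the velocity cannot drop to $v_{\min}$, since there $\dot v=\alpha(V(h)-v_{\min})>0$. Hence $h$ reaches $V^{-1}(v_{\min})$ in finite time with $v>v_{\min}$ (it cannot hit the equilibrium in finite time), and then falls strictly below $h_{\min}$.

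Your approach handles this correctly. The last-exit restart controls the unknown velocity at $s_0$ only through $v\le v_{\max}$, and the strict barrier ($\dot h>0$ wherever $h<g(h(s_0))$ on the excursion) then prevents further descent. An extra term such as $g(V^{-1}(v_{\min}))$ is genuinely needed. Since $g$ is increasing with $g(s)<s$, your constant is simply $g(\min\{h_0,V^{-1}(v_{\min})\})$. The paper's version is shorter and keeps the same $h_{\min}$ that is reused later (in the $N$-vehicle corollary and the parameter assumptions). With your argument, those statements should use the corrected constant.

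Two small repairs.

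First, state the final constant at the outset and choose $\bar t$ with $h(\bar t)$ below it. Your Step 2 claim $h\ge h_*$ is false in general, by the example above. With this choice, $s_0<\bar t$ and $h(s_0)\in\{h_0,V^{-1}(v_{\min})\}$ are automatic.

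Second, in Step 3 you need neither $\dot v_\ell$ (the leader's velocity is only assumed bounded) nor $V^{-1}(v_\ell)$ (undefined if $v_\ell=\sup V$ is not attained). On the excursion $V(h)\le v_{\min}\le v_\ell$, so $\dot v\le(\alpha+\beta/h^2)\dot h$. Integrating from $s_0$ gives
\[
\dot h(t)\ge v_\ell(t)-v(s_0)-\alpha\bigl(h(t)-h(s_0)\bigr)+\tfrac{\beta}{h(t)}-\tfrac{\beta}{h(s_0)},
\]
which is exactly your inequality.
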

\begin{proof}
Thanks to the content of \cref{theorem: h_min}, there exists some $T > 0$ for which $h(t) \geq h_{\min} > 0,    \forall t \in [0,T]$. Moreover, according to \cref{rem_h_min}, the aforementioned lower bound $h_{\min}$ on $h(t)$ does not depend on $T$ or the initial velocity. Thus, to complete the proof, it suffices to argue that $T = \infty$. Assume to the contrary that $T < \infty$. By continuity, there exists some $\delta > 0$ such that $h(T) = h_{\min}$ and $h(t) < h_{\min}$ for all $t\in (T, T+\delta)$. However, the proof of \cref{theorem: h_min} implies that $\dot{h}(t) \geq 0$ whenever $h(t) \leq \underbar{h}$. Consequently, at time \(t = T\), we must have \(\dot{h}(T) \ge 0\), which contradicts the assumption that \(h(t) < h_{\min} \le \underline{h}\) for all \(t \in (T,  T+\delta)\). As a result, $T = \infty$. 
\end{proof}

A numerical simulation illustrating the above result is shown in \cref{fig:lower_bounds}. 
In the two-vehicle dynamics \cref{dyn_2_vehicle}, we choose the parameters 
\(\alpha = 0.5\), \(\beta = 20\), \(c = 1\), \(l = 4.5\), \(v_{\max} = 30\), 
\(v_{\min} = 3\), and \(d_s = 2.5\). 
To examine the uniform lower bound on the follower’s space headway, we take the initial velocity 
of the follower to be \(v_{0} = v_{\max}\) and that of the leader to be \(v_{\ell, 0} = 0.35\, v_{\max}\), with an initial 
space headway \(h_0 = 10\). 
The leader has acceleration \(u_\ell(t) = -2\sin t, \forall t \in [0, T]\), so that it decelerates initially. 
The simulation is carried out over the time interval \([0,T]\) with \(T = 25\).

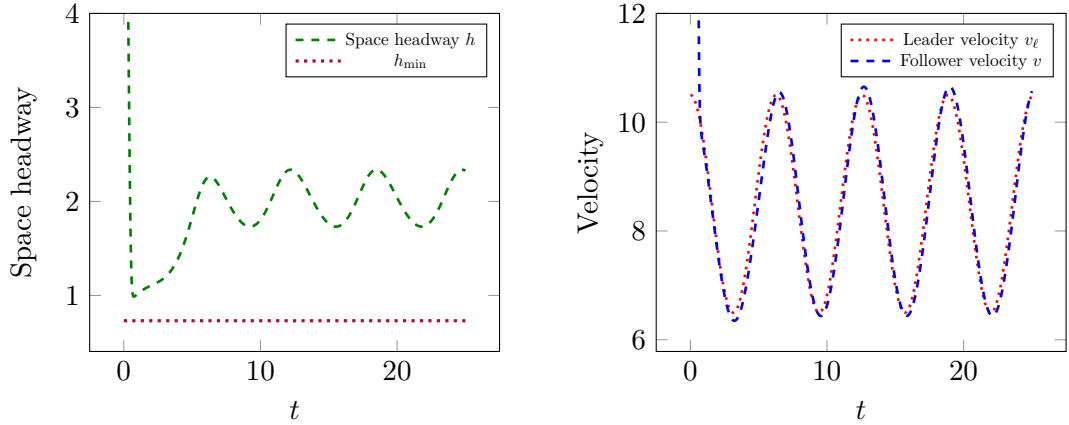
\begin{figure}
\centering
    \begin{tikzpicture}
         \begin{axis}[
             width=0.45\textwidth,
             xlabel={$t$},
             ylabel={Space headway}, 
            ylabel style={yshift=-10pt},
                 legend pos={north east,ymax=4,
                 legend style={nodes={scale=0.6, transform shape}}}
               ]
         \addplot[color=darkgreen, line width=1.0pt, mark=none, dashed]  table [y index=1, x index=0, col sep=comma] {Data/two_vehicle_headway.csv};
         \addplot[color=purple, samples=2,smooth,dotted, very thick]  table [y index=1, x index=0, col sep=comma] {Data/two_vehicle_headway_min.csv};  
           \addlegendentry{Space headway \(h\)};
         \addlegendentry{\(h_{\min}\)};
         \end{axis}
     \end{tikzpicture}
     \qquad
    \begin{tikzpicture}
        \begin{axis}[
            width=0.45\textwidth,
            xlabel={$t$},
            ylabel={Velocity},
            ylabel style={yshift=-10pt},
                legend pos={north east,ymax=12,
                 legend style={nodes={scale=0.6, transform shape}}}
                 ]
        \addplot[color=red,line width=1.0pt, mark=none, dotted]  table [y index=1, x index=0, col sep=comma] {Data/Leader_velocity.csv};
          \addplot[color=blue,line width=1.0pt, mark=none, dashed]  table [y index=1, x index=0, col sep=comma] {Data/Follower_velocity.csv};
         \addlegendentry{Leader velocity \(v_{\ell}\)};
         \addlegendentry{Follower velocity \(v\)};
        \end{axis}
    \end{tikzpicture}
 \caption{{\bf Left:} The follower's space headway, with the uniform minimum distance shown as a dotted line. {\bf Right:} The two vehicles' velocities.}
\label{fig:lower_bounds}
\end{figure}

Now we turn to the existence of a uniform maximum distance between the two vehicles. It can be seen that if the leader always drives at a constant velocity greater than the follower's velocity, then the follower's space headway is not bounded from above. However, we can expect an analogue of \cref{theorem: h_min} to hold under a similar set of assumptions.

\begin{theorem}[existence of a uniform maximum distance]
\label{theorem: h_max}
Consider the dynamics of the leader and follower governed by  \cref{eqn: leading,dyn_2_vehicle}, coupled with the initial datum \\ \((x_{\ell}(0), x(0), v(0),v_{\ell}(0) ) = (x_{\ell, 0}, x_0, v_0, v_{\ell,0}) \in \mathbb{R}^2 \times (0, v_{\max}] \times [v_{\min}, \bar{v}_{\max}]\) such that \(x_{\ell, 0} - x_0 -l >0\) and $\bar{v}_{\max} < v_{\max}$. Assume that the leader's velocity \(v_{\ell} \in L^{\infty}(\R_{\geq 0}; [v_{\min}, \bar{v}_{\max}]) \) and that the optimal velocity function \(V\) satisfies \cref{eq:assumption_V}. Then, the follower's space headway \(h\) is uniformly bounded from above at all times. In fact, we have
\[ h(t) \leq h_{\max},    \forall t \in \mathbb{R}_{\geq 0},\]
where
\begin{equation}
h_{\max} \coloneqq \max\left\{\tfrac{B_0 + \sqrt{B_0^2 + 4 \alpha \beta}}{2 \alpha}, h_0, V^{-1}(\bar{v}_{\max})\right\} < \infty
\label{eqn_h_max}
\end{equation}
with \(B_0 \coloneqq  \bar{v}_{\max} + \alpha h_0 - \tfrac{\beta}{h_0} \).
\end{theorem}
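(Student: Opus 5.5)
We mirror the argument of \cref{theorem: h_min}, with the roles of upper and lower bounds interchanged, and then use the continuation argument of \cref{coro:cor1}. Since the leader's velocity is only in \(L^{\infty}\), we never differentiate \(v_\ell\); we work with the once-integrated identity for \(\dot h\). The key observation comes from \cref{eq:key_equality}: whenever \(h(t)\ge V^{-1}(\bar v_{\max})\), we have \(V(h(t))\ge \bar v_{\max}\ge v_\ell(t)\). Hence the Bando term \(\alpha(V(h)-v)\) is at least \(\alpha(v_\ell - v)\), so the follower accelerates at least as fast as a pure relative-velocity law would dictate.

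Concretely, take any interval \([t_0,t]\) on which \(h\ge V^{-1}(\bar v_{\max})\). From \cref{dyn_2_vehicle} we get
\[
\dot v(s)\ \ge\ \alpha\big(v_\ell(s)-v(s)\big)+\beta\tfrac{v_\ell(s)-v(s)}{h^2(s)}=\alpha\dot h(s)-\beta\tfrac{\dd}{\dd s}\Big(\tfrac{1}{h(s)}\Big).
\]
Integrating from \(t_0\) to \(t\) gives
\[
v(t)\ge v(t_0)+\alpha\big(h(t)-h(t_0)\big)-\tfrac{\beta}{h(t)}+\tfrac{\beta}{h(t_0)}.
\]
Using \(\dot h=v_\ell-v\le \bar v_{\max}-v\), \(v(t_0)\ge0\), and \(h(t_0)\) as the reference headway, we obtain
\[
\dot h(t)\le \bar v_{\max}-\alpha h(t)+\alpha h(t_0)-\tfrac{\beta}{h(t_0)}+\tfrac{\beta}{h(t)}=\tfrac{1}{h(t)}\Big(-\alpha h^2(t)+B\,h(t)+\beta\Big),
\]
with \(B=\bar v_{\max}+\alpha h(t_0)-\beta/h(t_0)\). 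The positive root of the quadratic is \(\tfrac{B+\sqrt{B^2+4\alpha\beta}}{2\alpha}\), and \(\dot h\le 0\) whenever \(h\) exceeds this root. The velocity bounds \(0\le v\le v_{\max}\) needed here are available from \cref{theo:existence_uniqueness_non_delayed}, and the positivity of \(h\) is available from \cref{coro:cor1}; the latter also gives global existence. The hypothesis \(v_{\min}>V(0)\) of \cref{assum_initial_leading} holds here as well.

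We then split into cases as in \cref{theorem: h_min}. If \(h_0\le V^{-1}(\bar v_{\max})\), then \(h\) can only exceed \(V^{-1}(\bar v_{\max})\) after a first crossing time \(t_0\) with \(h(t_0)=V^{-1}(\bar v_{\max})\). If \(h_0> V^{-1}(\bar v_{\max})\), we take \(t_0=0\), which gives exactly \(B=B_0\). In both cases, on the excursion interval \([t_0,t_1)\) where \(h>V^{-1}(\bar v_{\max})\), the headway cannot exceed \(\max\{h(t_0),\text{root}\}\). Later excursions restart at \(h=V^{-1}(\bar v_{\max})\), and the resulting bound depends only on this value, not on \(v\). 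Finally, a continuation argument identical to \cref{coro:cor1} shows that the first time \(h\) would exceed the stated maximum contradicts \(\dot h\le0\) there.

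The main obstacle is making the constant match the stated \(h_{\max}\) exactly. Excursions starting at \(V^{-1}(\bar v_{\max})\) rather than at \(h_0\) produce the root of a quadratic with \(B\) evaluated at \(h(t_0)=V^{-1}(\bar v_{\max})\), not at \(h_0\). To fix this, we first note, as in \cref{rem_h_min}, that \(\tfrac{B+\sqrt{B^2+4\alpha\beta}}{2\alpha}\ge h(t_0)\). We then show that the root is monotone in the reference headway \(h(t_0)\), which holds because \(B\) is increasing in \(h(t_0)\). The bound is therefore controlled by the root evaluated at the larger of \(h_0\) and \(V^{-1}(\bar v_{\max})\).

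If the constant still does not match, there is a fallback. We integrate once from \(t=0\) on the whole time interval, separating out the set where \(h<V^{-1}(\bar v_{\max})\). On that set, \(h\) is already below \(h_{\max}\), and we use \(V\le v_{\max}\) there. This yields either the statement exactly or \(h_{\max}\) with \(V^{-1}(\bar v_{\max})\) in place of \(h_0\) inside \(B_0\). We would settle for the latter if necessary.
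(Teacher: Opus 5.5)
Your excursion argument is sound, and it is essentially the paper's case (iii). Integrating $\dot v$ instead of $\ddot h$ is in fact the cleaner route, since $v_\ell$ is only in $L^\infty$ and the paper's use of $\dot v_\ell$ is formal. The gap is the constant, and your proposed fix does not close it.

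Write $H \coloneqq V^{-1}(\bar v_{\max})$, $B(s) \coloneqq \bar v_{\max} + \alpha s - \beta/s$ and $r(s) \coloneqq \bigl(B(s) + \sqrt{B(s)^2 + 4\alpha\beta}\bigr)/(2\alpha)$. The claimed bound is then $\max\{r(h_0), h_0, H\}$. Because $-\alpha s^2 + B(s)s + \beta = \bar v_{\max} s > 0$, one has $r(s) > s$, and $r$ is increasing. Hence your argument proves $h \le r(\max\{h_0, H\})$, and for $h_0 < H$ this is strictly larger than $\max\{r(h_0), h_0, H\}$. The monotonicity of $r$ therefore confirms the mismatch rather than removing it.

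The paper's proof has the same hole. It restarts case (iii) at $t_3$ or $t_4$ as though $\bar h$ did not depend on the headway at the restart time, but it does, through $B_0$. Your fallback of integrating from $t=0$ cannot help either. On $\{h < H\}$ you would need a lower bound on $V(h) - v_\ell$, which is negative there and accumulates over time, and $V \le v_{\max}$ points in the wrong direction.

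In fact no argument can yield the stated constant, because the statement is false. The mechanism comes from the identity $\tfrac{\dd}{\dd t}\bigl(v - \alpha h + \tfrac{\beta}{h}\bigr) = \alpha\bigl(V(h) - v_\ell\bigr)$:
\begin{itemize}
\item On each excursion above $H$, your barrier is governed by the value of $v - \alpha h + \beta/h$ at the excursion's start; a smaller value gives a higher barrier.
\item $B_0$ uses the bound $-\alpha h_0 + \beta/h_0$ for this value at $t=0$.
\item This quantity decreases while $h < H$ and $v_\ell \approx \bar v_{\max}$. So at the first crossing of $H$ it can be far below $-\alpha h_0 + \beta/h_0$, and the excursion then goes well above $r(h_0)$.
\end{itemize}

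Concretely, take $\alpha = 1$, $\beta = 10^{-3}$, $v_\ell \equiv \bar v_{\max} = 1$, $v_{\min} = \tfrac12$, $h_0 = 0.1$, $v_0 = 0.01$. Take $V$ as in \eqref{eq:assumption_V} with $V \le 0.02$ on $[0, 9.9]$, $V(10) = 1$, and $1 < V < 1.001 = v_{\max}$ on $(10,\infty)$. Then $B_0 = 1.09$ and $r(h_0) \approx 1.09$, so the stated $h_{\max}$ equals $H = 10$. The trajectory nevertheless overshoots:
\begin{itemize}
\item Since $\int_0^t \beta\tfrac{v_\ell - v}{h^2}\dd s = \beta\bigl(\tfrac{1}{h_0} - \tfrac{1}{h(t)}\bigr) \le 10^{-2}$, one finds $v \le 0.03$ until $h$ reaches $9.9$.
\item Consequently $v \le 0.16$ when $h$ first reaches $10$.
\item Afterwards, while $h \ge 10$, $\tfrac{\dd}{\dd t}(1 - v) \ge -(1+10^{-5})(1 - v) - 10^{-3}$, which gives $h > 10.8$ five time units later.
\end{itemize}

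The correct conclusion is therefore the one your argument actually delivers, $h \le r(\max\{h_0, H\})$, and you should state it in that form. Substituting $H$ for $h_0$ inside $B_0$ while keeping the outer maximum would be too small when $h_0 > H$, since the first excursion can then come close to $r(h_0)$.
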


\begin{proof}
The proof closely mirrors the reasoning used in the proof of \cref{theorem: h_min}, but for the sake of completeness, we briefly sketch it. Adopting the same notation, we arrive at the identity in \cref{eq:key_equality}. We then discuss several scenarios separately:
\begin{enumerate}[label=(\roman*)]

\item If $v_0 > v_{\ell,0}$, then $\dot{h}(0) < 0$. Continuity implies that there exists some $t_3 > 0$ for which 
$h(t) \leq \max\{h_0,V^{-1}(\bar{v}_{\max})\}$ whenever $t \in (0,t_3)$. Without loss of generality, we may assume that $t_3 < \infty$ and $h(t_3) > \max\{h_0,V^{-1}(\bar{v}_{\max})\}$, as well as $v(t_3) \leq v_\ell(t_3)$.

\item If $v_0 \leq v_{\ell,0}$ and $h_0 \leq V^{-1}(\bar{v}_{\max})$, then by continuity we can find some $t_4 > 0$ such that $h(t) \in [h_0, \tfrac{3}{2}V^{-1}(\bar{v}_{\max})]$ whenever $t \in [0,t_4)$. Without loss of generality, we may assume that $t_4 < \infty$ and $h(t_4) = \tfrac{3}{2}V^{-1}(\bar{v}_{\max})$, as well as $v(t_4) \leq v_\ell(t_4)$.

\item In the scenario where $v_0 \leq v_{\ell,0}$ and $h_0 > V^{-1}(\bar{v}_{\max})$, we will show the existence of a universal positive constant $\bar{h} > 0$ depending only on $\alpha$, $\beta$, $h_0$, and the specific choice of the optimal velocity function $V$ such that $h(t) \leq \bar{h}$ whenever $t\in [0,\gamma)$, for some $\gamma > 0$. In particular, since the uniform-in-time upper bound $\bar{h}$ derived in this case does not depend on $v_0$, we can simply reuse the starting times in cases (i) and (ii) above (i.e., view $t_3$ or $t_4$ as the initial time $0$) to reduce the previous two scenarios (i) and (ii) to this case. To proceed, we notice that the assumptions $v_0 \leq v_{\ell,0}$ and $h_0 > V^{-1}(\bar{v}_{\max})$ guarantee the existence of some $\gamma > 0$ such that $h(t) \geq V^{-1}(\bar{v}_{\max})$ for all $t \in [0,\gamma)$. Since \(V' > 0\) due to the assumption \cref{eq:assumption_V}, and recalling that $h_\ell(t) = V^{-1}(v_\ell(t))$ and \(v_{\ell}(t) \leq \bar{v}_{\max}\), it follows that
\[
V'\!\bigl(\xi_{v_{\ell}}(t)\bigr)\bigl(h(t) - h_\ell(t)\bigr) \geq 0
\quad \forall t \in [0, \gamma).
\]
Therefore,
\begin{align}
    \ddot{h}(t) \leq \dot{v}_{\ell}(t) - \alpha \left(v_\ell(t) - v(t)\right)  - \beta\tfrac{v_\ell(t) - v(t)}{h^2(t)} \quad \forall t \in [0, \gamma).
    \label{eq:key_equality_2}
\end{align}
 Consequently, upon integration, we can deduce from \eqref{eq:key_equality_2} that
    \begin{equation}\label{eq:key_inequality_2}
    \begin{aligned}
    \dot{h}(t) & = v_{\ell}(t) - v(t) \\
    & \leq v_{\ell, 0} - v_0 + \int_0^t  \dot{v}_{\ell}(s) \dd s  - \alpha \int_0^t \left(v_\ell(s) - v(s)\right) \dd s  -\beta \int_0^t \tfrac{v_\ell(s) - v(s)}{h^2(s)} \dd s \\
    &= v_{\ell}(t) - v_0 - \alpha (h(t)-h_0) + \tfrac{\beta}{h(t)} - \tfrac{\beta}{h_0}\\
     &\leq \bar{v}_{\max} - \alpha (h(t)-h_0) + \tfrac{\beta}{h(t)} - \tfrac{\beta}{h_0}\\
     &= \tfrac{1}{h(t)}\left(-\alpha h^2(t) + \left(\bar{v}_{\max} + \alpha h_0 - \tfrac{\beta}{h_0}\right) h(t) + \beta\right)
    \end{aligned}
    \end{equation}
    for all $t \in [0,\gamma)$. Thus, it holds that $h(t) \leq \max\{h_0,\bar{h}\}$ for all $t \in [0,\gamma)$, in which 
    \[\bar{h} \coloneqq \tfrac{1}{2 \alpha} \left(\bar{v}_{\max} + \alpha h_0 - \tfrac{\beta}{h_0} + \sqrt{\left(\bar{v}_{\max} + \alpha h_0 - \tfrac{\beta}{h_0}\right)^2 + 4 \alpha \beta}\right) > 0.
    \]
    Indeed, since the right-hand side of \eqref{eq:key_inequality_2} contains a quadratic polynomial in $h(t)$ with $-\alpha h^2(t) + \left(-\bar{v}_{\max} + \alpha h_0 - \tfrac{\beta}{h_0}\right) h(t) + \beta \leq 0$ whenever $h(t) \geq \bar{h}$, we deduce that $\dot{h}(t) \leq 0$ whenever $h(t)\geq \bar{h}$. Consequently, $h(t) \leq \max\{h_0,\bar{h}\}$ for all $t \in [0,\gamma)$.
\end{enumerate}
Together, these three cases guarantee the existence of an a priori finite time \\ $\bar{T}=\min\{t_3, t_4, \gamma\} > 0$ such that $h(t) \leq \max\{V^{-1}(\bar{v}_{\max}), h_0, \bar{h}\}$ for all $t \in [0,\bar{T}]$. Finally, arguing as in the proof of \cref{coro:cor1} allows us to conclude that $\bar{T} = \infty$.
\end{proof}

\Cref{fig:upper_bounds} depicts simulations of the two-vehicle dynamics \cref{dyn_2_vehicle} with the same parameters as in \cref{fig:lower_bounds}, 
with \(\bar v_{\max} = 0.99\, v_{\max}\). 
To illustrate the uniform upper bound on the distance, we set the initial velocity of the follower 
to \(v_0 = 0.005\, v_{\max}\) and that of the leader to \(v_{\ell,0} = \bar v_{\max}\), and we assume an 
initial space headway of \(h_0 = 2\). 
The simulation is again carried out over the time interval \([0,T]\) with \(T = 20\).

\begin{figure}
\centering
    \begin{tikzpicture}
         \begin{axis}[
             width=0.45\textwidth,
             xlabel={ $t$ },
             ylabel={Space headway}, 
            ylabel style={yshift=-10pt},
                 legend pos={south east, ymin=30,
                 legend style={nodes={scale=0.6, transform shape}}}
               ]
         \addplot[color=darkgreen,line width=0.5pt, mark=none, dashed]  table [y index=1, x index=0, col sep=comma] {Data/two_vehicle_headway_2.csv};
         \addplot[color=purple, samples=2,smooth,dotted, very thick]  table [y index=1, x index=0, col sep=comma] {Data/two_vehicle_headway_max.csv};  
           \addlegendentry{Space headway \(h\)};
         \addlegendentry{\(h_{\max}\)};
         \end{axis}
     \end{tikzpicture}
     \qquad
    \begin{tikzpicture}
        \begin{axis}[
            width=0.45\textwidth,
            xlabel={$t$},
            ylabel={Velocity},
            ylabel style={yshift=-10pt},
                legend pos={south east, ymin=20,
                 legend style={nodes={scale=0.6, transform shape}}}
                 ]
        \addplot[color=red,line width=0.5pt, mark=none, dashed]  table [y index=1, x index=0, col sep=comma] {Data/Leader_velocity_2.csv};
          \addplot[color=blue,line width=0.5pt, mark=none, dashed]  table [y index=1, x index=0, col sep=comma] {Data/Follower_velocity_2.csv};
         \addlegendentry{Leader velocity \(v_{\ell}\)};
         \addlegendentry{Follower velocity \(v\)};
        \end{axis}
    \end{tikzpicture}
 \caption{{\bf Left:} The follower's space headway with the uniform maximum distance shown as a dotted line. {\bf Right:} The two vehicles' velocities.}
\label{fig:upper_bounds}
\end{figure}
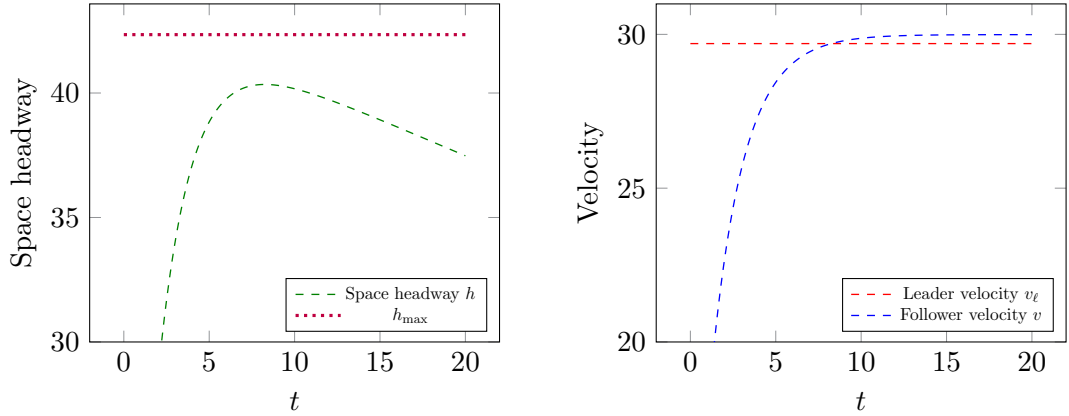

\begin{remark}[uniform bounds for velocities and accelerations]
\label{rem:uniform_bounds}
As an immediate corollary of the uniform bounds on the space headway, we deduce uniform bounds on the velocity and acceleration of the follower over an infinite time horizon. Specifically, under the conditions of \cref{theorem: h_min}, we have 
\[0\leq v(t) \leq v_{\max} \quad \text{and} \quad -\alpha v_{\max} - \beta \tfrac{v_{\max}}{h^2_{\min}} \leq a(t) \coloneqq \dot{v}(t) \leq \alpha v_{\max} + \beta \tfrac{v_{\max}}{h^2_{\min}}\ \forall t\in\R_{\geq0}.
\] 
If we also assume that $v(0) \geq V(h_{\min})$, then $v(t) \geq V(h_{\min})$ for all $t\geq 0$. Indeed, to justify the last statement on the uniform-in-time lower bound $\inf_{t \geq 0} v(t) \geq V(h_{\min})$ for the velocity of the follower, it suffices to notice that whenever $v(\tau) = V(h_{\min})$ for some $\tau \geq 0$, 
\begin{equation}
\label{eq:ref}
\begin{aligned}
\tfrac{\dd}{\dd t} v(t)\big\vert_{t=\tau} & = \alpha \left(V(h(\tau)) - v(\tau)\right) + \beta \tfrac{v_\ell(\tau) - v (\tau)}{h^2(\tau)} \\
& \geq \alpha\big(V(h(\tau)) - V(h_{\min})\big) + \beta \tfrac{v_{\min} - V(h_{\min})}{h^2(\tau)} \geq 0.
\end{aligned}
\end{equation}
\end{remark}

We now extend our result by establishing the well-posedness of the Bando-FtL model for 
\(N+1\) vehicles over an infinite time horizon, for any fixed $N\geq 1$.
\begin{assumption}
     The initial datum $\{(x_{i,0},v_{i,0})\}_{1\leq i\leq N+1}$ is chosen such that the initial space headways are $h_i(0) \coloneqq x_{i-1,0}-x_{i,0}-l = h_{i,0} > 0$ and $v_i(0) \in [v^i_0, v_{\max}]$ for all $2\leq i\leq N+1$, in which $v^i_0 \coloneqq V(h_{i,\min})$ and the (positive) sequence $\{h_{i,\min}\}_{2\leq i \leq N+1}$ is defined recursively via the following relations:
\begin{equation}\label{eq:h_seq}
h_{2,\min} \coloneqq h_{\min} \text{ and } h_{i+1,\min} \coloneqq \min\left\{\tfrac{A_{i,0} + \sqrt{A_{i,0}^2 + 4 \alpha \beta}}{2 \alpha},h_{i,\min}\right\}\quad \text{for $2\leq i\leq N$}, 
\end{equation} 
where 
\begin{equation}
 A_{i,0} = -v_{\max} + \alpha h_{i,0} - \tfrac{\beta}{h_{i,0}},\quad  2 \leq i \leq N.
\end{equation}
\label{assum_N_cars_initial}
\end{assumption}

\begin{corollary}[well-posedness of the Bando-FtL model with finitely many vehicles] 
\label{cor_well_posed_N_cars}
Assume that the dynamics of the first vehicle and the $N \geq 2$ followers are described by system \cref{system_dyn}  and let \cref{assum_initial_leading,assum_N_cars_initial} be satisfied.
Then the space headways of the followers admit a uniform positive lower bound. Specifically, for each $2\leq i\leq N+1$ and for all $t \geq 0$, it holds that $h_i(t) \geq h_{i,\min}$.
\end{corollary}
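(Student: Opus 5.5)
We argue by induction on the vehicle index \(i\in\{2,\dots,N+1\}\), applying at each step the two-vehicle results (\cref{theorem: h_min}, \cref{coro:cor1} and \cref{rem:uniform_bounds}) to the pair formed by vehicle \(i-1\) as leader and vehicle \(i\) as follower. The system \cref{system_dyn} is triangular: the motion of vehicle \(i\) depends only on vehicles \(i-1\) and \(i\). So once the trajectory of vehicle \(i-1\) is known on \(\R_{\geq0}\), the pair \((x_i,v_i)\) solves exactly the two-vehicle system \cref{dyn_2_vehicle} with \(v_\ell=v_{i-1}\). Global existence for the whole system follows once each headway has a uniform positive lower bound, because the right-hand side is then Lipschitz.

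\textbf{Base case \(i=2\).} The leader is vehicle \(1\), which satisfies \cref{assum_initial_leading}. \cref{assum_N_cars_initial} gives \(h_{2,0}>0\) and \(v_{2,0}\ge 0\), so \cref{assum_initial_following} holds. \cref{coro:cor1} then yields \(h_2(t)\ge h_{\min}=h_{2,\min}\) for all \(t\ge0\). Since also \(v_{2,0}\ge V(h_{2,\min})\), \cref{rem:uniform_bounds} gives
\[
V(h_{2,\min})\le v_2(t)\le v_{\max}\quad \text{for all } t\ge 0.
\]

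\textbf{Inductive step.} Assume vehicle \(i\) satisfies \(h_i\ge h_{i,\min}\) and \(v_i(t)\in[V(h_{i,\min}),v_{\max}]\) for all \(t\ge0\). Treat vehicle \(i\) as the leader of vehicle \(i+1\), with effective minimal velocity \(\tilde v_{\min}\coloneqq V(h_{i,\min})\).

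The main obstacle is that \cref{assum_initial_leading} requires \(v_{\min}>V(0)\), and that the bound \(h_{\min}\) from \cref{eqn_h_min} involves \(V^{-1}(v_{\min})\). With \(\tilde v_{\min}=V(h_{i,\min})\) we get \(V^{-1}(\tilde v_{\min})=h_{i,\min}>0\), and strict monotonicity of \(V\) gives \(\tilde v_{\min}>V(0)\). So the assumption is met, and \cref{eqn_h_min} applied to the pair \((i,i+1)\) becomes
\[
\min\left\{\tfrac{A_{i+1,0}+\sqrt{A_{i+1,0}^2+4\alpha\beta}}{2\alpha},\,h_{i,\min}\right\}.
\]
This matches the recursion \cref{eq:h_seq}. (The indexing of \(A_{i,0}\) must be read as built from the initial headway of the follower in the pair; by \cref{rem_h_min} the term \(h_0\) drops out of the minimum.)

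I would verify that the proofs of \cref{theorem: h_min} and \cref{coro:cor1} use only three properties of the leader: \(v_\ell\le v_{\max}\), \(v_\ell\ge \tilde v_{\min}\), and \(v_\ell\) absolutely continuous so that \(\int\dot v_\ell=v_\ell(t)-v_\ell(0)\). All three hold for \(v_i\in W^{1,\infty}\). The bound \(v\le v_{\max}\) on the follower is also needed.

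\cref{coro:cor1} then gives \(h_{i+1}(t)\ge h_{i+1,\min}\) for all \(t\ge0\). Next, \(v_{i+1,0}\ge V(h_{i+1,\min})\) together with the computation \cref{eq:ref} gives \(v_{i+1}\ge V(h_{i+1,\min})\). This uses \(V(h_{i+1,\min})\le V(h_{i,\min})\le v_i\), which holds because \(h_{i+1,\min}\le h_{i,\min}\). This closes the induction up to \(i=N+1\).
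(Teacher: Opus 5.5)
Your proposal is correct and follows the paper's proof: induction along the one-directional coupling, applying \cref{theorem: h_min} (via its infinite-horizon version \cref{coro:cor1}) to the pair $(i,i+1)$ with effective minimal velocity $V(h_{i,\min})>V(0)$, then propagating the velocity lower bound $v_{i+1}\geq V(h_{i+1,\min})$ through the barrier computation \cref{eq:ref}. Your reading of the indices is also the right one: for the pair $(i,i+1)$ the two-vehicle bound is built from the follower's initial headway $h_{i+1,0}$, so the recursion \cref{eq:h_seq} (and the paper's use of $A_{2,0}$ when bounding $h_3$) should have $A_{i+1,0}$ in place of $A_{i,0}$.
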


\begin{proof}
In the system \cref{system_dyn}, the dynamics of $\{(x_k(t),v_k(t))\}_{1 \leq k \leq i}$ evolve independently of $\{(x_m(t),v_m(t))\}_{i < m \leq N+1}$. Specifically, the state $(x_i,v_i)$ of the $i$th vehicle influences the next vehicle $(x_{i+1},v_{i+1})$, but not vice versa. According to \cref{theorem: h_min,rem:uniform_bounds}, we have $h_2(t) \geq h_{\min} > 0$ and $v_2(t) \geq V(h_{\min}) > V(0)$ for all $t\geq 0$. Thus, we may apply \cref{theorem: h_min} once more, this time treating the second vehicle as the leader of the third, which yields
\[h_3(t) \geq \min\left\{\tfrac{A_{2,0} + \sqrt{A_{2,0}^2 + 4 \alpha \beta}}{2 \alpha}, V^{-1}\left(V(h_{\min})\right)\right\} = \min\left\{\tfrac{A_{2,0}+ \sqrt{A_{2,0}^2 + 4 \alpha \beta}}{2 \alpha}, h_{\min}\right\}.\]
Since $v_3(0) \geq v^3_0 = V(h_{3,\min}) > V(0)$ by assumption, an argument analogous to that leading to \eqref{eq:ref} ensures that $v_3(t) \geq v^3_0$ for all $t \geq 0$. Then we can invoke \cref{theorem: h_min} again, this time treating the third vehicle as the leader of the fourth, leading us to 
\[h_4(t) \geq \min\left\{\tfrac{A_{3,0} + \sqrt{A_{3,0}^2 + 4 \alpha \beta}}{2 \alpha},V^{-1}\left(V(h_{3,\min})\right)\right\} = \min\left\{\tfrac{A_{3,0} + \sqrt{A_{3,0}^2 + 4 \alpha \beta}}{2 \alpha}, h_{3,\min}\right\}.\] The proof is then finished via an induction argument. 
\end{proof}
\begin{remark}[well-posedness for infinitely many vehicles]
\label{rem_well_posed_inifitely_cars}
The result of \cref{cor_well_posed_N_cars} can be extended to the case in which the number of vehicles $N$ tends to infinity under the additional assumption that the initial space headway \(h_{i,0} \geq \underbar{h}_0 >0\) for all \(i \geq 2\), where \(\underbar{h}_0\) is independent of \(N\). Indeed, denoting \(A_{0,0} \coloneqq -v_{\max} + \alpha \underbar{h}_0 - \tfrac{\beta}{\underbar{h}_0}\), we have $A_{0,0} \leq A_{i,0}$ and 
\begin{equation}
\label{ineqn_A_s}
\tfrac{A_{0,0} + \sqrt{A_{0,0}^2 + 4 \alpha \beta}}{2 \alpha} \leq \tfrac{A_{i,0} + \sqrt{A_{i,0}^2 + 4 \alpha \beta}}{2 \alpha}
\end{equation}
for all $i\geq 2$. Thus, by \cref{eqn_h_min_simplied,eq:h_seq,ineqn_A_s}, we have
\begin{equation}
    h_i(t) \geq \left\{\tfrac{A_{0,0} + \sqrt{A_{0,0}^2 + 4 \alpha \beta}}{2 \alpha }, V^{-1}(v_{\min})\right\}, \quad \forall~i \geq 2. 
\end{equation}
\end{remark}

This result is illustrated in \cref{fig_5_cars} for a system of five vehicles with initial velocities 
\(v_{1,0} = 10.5\), \(v_{2,0} = 16\), \(v_{3,0} = 22\), \(v_{4,0} = 26\), and \(v_{5,0} = 30\) 
and initial space headways 
\(h_{2,0} = 10\), \(h_{3,0} = 8\), \(h_{4,0} = 6\), and \(h_{5,0} = 5\).

\begin{figure}
\centering
    \begin{tikzpicture}
         \begin{axis}[
             width=0.45\textwidth,
             xlabel={ $t$ },
             ylabel={Space headway}, 
            ylabel style={yshift=-10pt},
                 legend pos={north east, ymax=4,
                 legend style={nodes={scale=0.6, transform shape}}}
               ]
         \addplot[color=blue,line width=0.5pt, mark=none, dashed]  table [y index=1, x index=0, col sep=comma] {Data/Follower_2_headway.csv};
         \addplot[color=yellow,line width=0.5pt, mark=none, dashed]  table [y index=1, x index=0, col sep=comma] {Data/Follower_3_headway.csv};
         \addplot[color=green,line width=0.45pt, mark=none, dashed]  table [y index=1, x index=0, col sep=comma] {Data/Follower_4_headway.csv};
         \addplot[color=purple,line width=0.5pt, mark=none, dashed]  table [y index=1, x index=0, col sep=comma] {Data/Follower_5_headway.csv};
         \addplot[color=red, samples=2,smooth,dotted, very thick]  table [y index=1, x index=0, col sep=comma] {Data/Five_vehicle_headway_min.csv};  
           \addlegendentry{Space headway \(h_2\)};
           \addlegendentry{Space headway \(h_3\)};
           \addlegendentry{Space headway \(h_4\)};
           \addlegendentry{Space headway \(h_5\)};
         \addlegendentry{\(h_{\min}\)};
         \end{axis}
     \end{tikzpicture}
     \qquad
    \begin{tikzpicture}
        \begin{axis}[
            width=0.45\textwidth,
            xlabel={$t$},
            ylabel={Velocity},
            ylabel style={yshift=-10pt},
                legend pos={north east, ymax=14,
                 legend style={nodes={scale=0.6, transform shape}}}
                 ]
        \addplot[color=red,line width=0.5pt, mark=none, dashed]  table [y index=1, x index=0, col sep=comma] {Data/Leader_1_velocity.csv};
        \addplot[color=blue,line width=0.5pt, mark=none, dashed]  table [y index=1, x index=0, col sep=comma] {Data/Follower_2_velocity.csv};
        \addplot[color=yellow,line width=0.5pt, mark=none, dashed]  table [y index=1, x index=0, col sep=comma] {Data/Follower_3_velocity.csv};
        \addplot[color=green,line width=0.5pt, mark=none, dashed]  table [y index=1, x index=0, col sep=comma] {Data/Follower_4_velocity.csv};
        \addplot[color=purple,line width=0.5pt, mark=none, dashed]  table [y index=1, x index=0, col sep=comma] {Data/Follower_5_velocity.csv};
    \addlegendentry{Leader velocity \(v_{1}\)};
         \addlegendentry{\(1\)st Follower velocity \(v_2\)};
         \addlegendentry{\(2\)nd Follower velocity \(v_3\)};
         \addlegendentry{\(3\)rd Follower velocity \(v_4\)};
         \addlegendentry{\(4\)th Follower velocity \(v_5\)};
        \end{axis}
    \end{tikzpicture}
 \caption{{\bf Left:} The four followers' space headways with the uniform minimum distance shown as a dotted line. {\bf Right:} The five vehicles' velocities.}
 \label{fig_5_cars}
\end{figure}
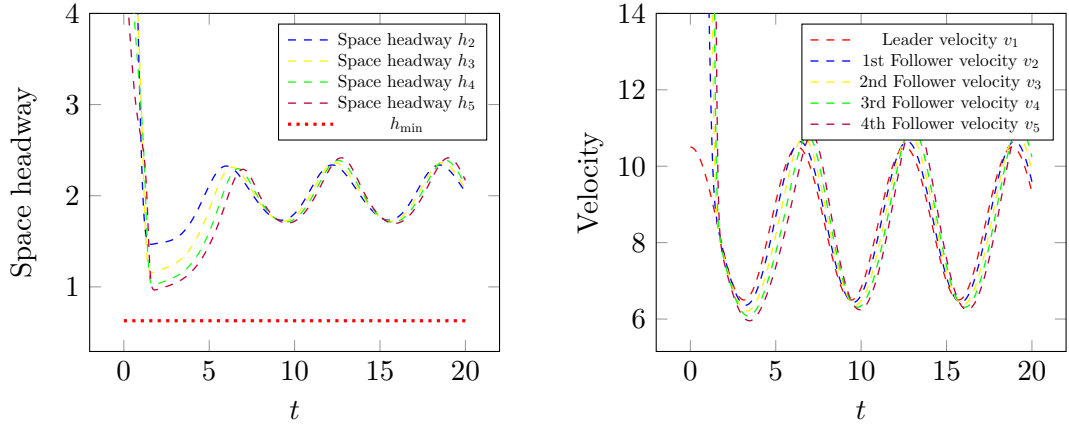

\subsection{The existence and uniqueness of equilibria of the Bando-FtL model}

Now we are ready to rewrite system \eqref{system_dyn} to study the dynamics of the \(N\) followers' space headways and relative velocities, under the following assumption.
\begin{assumption}
    The leader drives at a prescribed constant velocity \(v^{*} \in [v_{\min}, v_{\max}]\) with \(v_{\min}>V(0)\).
    \label{assum_leading_const_v}
\end{assumption}

Let \(h_i(t)=x_{i-1}(t) - x_i(t) -l\) and \(\Delta v_i (t)= v_{i-1}(t)-v_i(t)\) be the space headway and relative velocity of vehicle \(i \in \{2, \dots, N+1\}\) at time \(t \in \R_{\geq 0}\), respectively.  The Bando-FtL system \eqref{system_dyn} can be reformulated in terms of the space headways and relative velocities of vehicles \(i \in \{2,\dots,N+1\}\) as follows.

\begin{definition} The Bando-FtL system \eqref{system_dyn} admits the following equivalent representation: Fot \(t \in \R_{\geq 0}\), 
\begin{subequations}
\begin{align}
    v_1(t)&=v^{*}, \\
    \dot{h}_i(t) &=v_{i-1}(t) - v_i(t), \quad i \in \{2, \dots, N+1\}\label{eq:bando_position}, \\
    \Delta \dot{v}_2(t)&= -\textnormal{Acc}(x_1(t)-x_2(t)-l, v_2(t), v_1(t)), &&  \\
    \Delta \dot{v}_i(t)&=\textnormal{Acc}(x_{i-2}(t) - x_{i-1}(t)-l, v_{i-1}(t), v_{i-2}(t)) \\
    & \quad - \textnormal{Acc}(x_{i-1}(t) - x_i(t)-l, v_i(t), v_{i-1}(t)), \, i \in \{3, \dots, N+1\}, \label{eq:bando_velocity}\\
    (h_i(0), \Delta v_{i}(0))&=(x_{i-1, 0} - x_{i,0}-l, v_{i-1,0} - v_{i,0}), \quad i \in \{2, \dots, N+1\}.
    \label{eq:ego_initial_condition}
\end{align}
\label{system_dyn_new}
\end{subequations}

\end{definition}
\begin{remark}[well-posedness of the dynamics in relative position and velocity]
Under the conditions of \cref{{theo:existence_uniqueness_non_delayed}}, for any finite time horizon \(T \in \R_{>0}\), the mapping \((h, \Delta v) \in W^{2,\infty}((0,T); \R^{N}) \times W^{1,\infty}((0, T); \R^{N})\),
with

\begin{equation}
\begin{aligned}
    h_i & = x_{i-1} - x_{i} -l,  && \forall~ i \in \{2, \dots, N+1\}, \\
    \Delta v_i & = v_{i-1} - v_{i}, && \forall~ i \in \{2, \dots, N+1\},
\end{aligned}
\label{def_h_rv}
\end{equation}
is the unique solution to system \eqref{system_dyn_new} with the initial datum \cref{eq:ego_initial_condition}.
\end{remark}
At equilibrium, it is natural and desirable to have all vehicles driving at the same velocity as the first vehicle with equivalent space headway. This motivates the following observation.
\begin{lemma}[existence and uniqueness of equilibrium]\label{def: equilibrium} Let \(T \in \R_{>0}\cup\{\infty\}\) be given, and let \cref{assum_leading_const_v} be satisfied. Then there exists a unique equilibrium associated with the coupled system of nonlinear ODEs \cref{eq:bando_position,eq:bando_velocity}, which occurs when
$v_{i} = v^{*}$ and $h_i = h^{*}\coloneqq V^{-1}(v^{*})$ for all $i \in \{2, \dots, N+1\}$.
\end{lemma}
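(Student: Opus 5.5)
We set all time derivatives in \cref{system_dyn_new} to zero and solve the resulting algebraic system by induction along the platoon, exploiting the same one-directional coupling already used in the proof of \cref{cor_well_posed_N_cars}. An equilibrium is a state $(h_i,\Delta v_i)_{2\le i\le N+1}$ with $h_i>0$ and $\dot h_i=0$, $\Delta\dot v_i=0$ for all $i$. Since $\dot h_i = v_{i-1}-v_i=\Delta v_i$, the condition $\dot h_i=0$ forces $\Delta v_i=0$, so $v_i=v_{i-1}$ for every $i\in\{2,\dots,N+1\}$. Because $v_1\equiv v^{*}$ by \cref{assum_leading_const_v}, a trivial induction gives $v_i=v^{*}$ for all $i$.

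Next we use the velocity equations. For $i=2$, $\Delta\dot v_2=0$ gives $\textnormal{Acc}(h_2,v_2,v_1)=0$. Inserting $v_2=v_1=v^{*}$ into \cref{defi:ACC} kills the follow-the-leader term, and we are left with $\alpha(V(h_2)-v^{*})=0$, that is, $V(h_2)=v^{*}$. For $i\ge3$, $\Delta\dot v_i=0$ states $\textnormal{Acc}(h_i,v_i,v_{i-1})=\textnormal{Acc}(h_{i-1},v_{i-1},v_{i-2})$. Inducting on $i$, the right-hand side vanishes by the previous step, so again $\alpha(V(h_i)-v^{*})=0$. Since $\alpha>0$, we conclude $V(h_i)=v^{*}$ for all $i$.

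It remains to solve $V(h)=v^{*}$ uniquely for $h>0$; this is the only point requiring care. Uniqueness follows because $V'>0$ by \cref{eq:assumption_V}, so $V$ is strictly increasing and injective on $\R_{\ge0}$. Existence needs $v^{*}$ to lie in the range of $V$. Since $V$ is continuous and increasing, its range on $\R_{\ge0}$ is $[V(0),\sup V)$, with $\sup V=v_{\max}=\|V\|_{L^\infty}$. By \cref{assum_leading_const_v}, $v^{*}\ge v_{\min}>V(0)$, so the intermediate value theorem gives a unique $h^{*}=V^{-1}(v^{*})>0$.

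The obstacle is the endpoint $v^{*}=v_{\max}$. There the supremum may not be attained, since $V'>0$ everywhere means $V$ never reaches its supremum. I would handle this as the paper implicitly does: either restrict to $v^{*}<v_{\max}$, or interpret $V^{-1}(v_{\max})$ in the extended sense, noting that $v^{*}\in V(\R_{>0})$ is needed for $h^{*}$ to be finite.

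Conversely, we check that the state $v_i=v^{*}$, $h_i=h^{*}$ is indeed an equilibrium. Every $\Delta v_i$ vanishes, and every $\textnormal{Acc}(h^{*},v^{*},v^{*})=\alpha(V(h^{*})-v^{*})=0$, so all right-hand sides of \cref{eq:bando_position,eq:bando_velocity} are zero. This gives both existence and uniqueness, independently of $T$.
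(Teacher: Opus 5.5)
Your argument matches the paper's: you zero the right-hand sides of the headway and relative-velocity equations, induct along the platoon to get $v_i=v^{*}$, then use $\textnormal{Acc}(h_i,v^{*},v^{*})=\alpha(V(h_i)-v^{*})=0$ inductively to get $V(h_i)=v^{*}$, with strict monotonicity of $V$ giving uniqueness of $h^{*}$. Your extra observation is also correct and sharper than the paper, which attributes existence of $h^{*}$ to strict monotonicity alone: existence actually requires $v^{*}\in V(\R_{\ge 0})=[V(0),v_{\max})$, so the lemma genuinely fails at the endpoint $v^{*}=v_{\max}$ permitted by the assumption, and it should be stated for $v^{*}<v_{\max}$.
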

\begin{proof}
The strict monotonicity of the optimal velocity function \(V\) guarantees the existence and uniqueness of such an \(h^{*}\). Setting the right-hand side of  \cref{eq:bando_position} to zero, we have that \(v_{2} = v_{1} =v^{*}\), thanks to \(v_{1} \equiv v^{*}\), and inductively that \(v_{i} =v^{*}\ \forall i\in \{1, \dots, N+1\}\). Next, we insert \(v_i = v^{*}~   \forall  i \in\{1, \dots, N+1\}\) into \cref{eq:bando_velocity} to determine the corresponding space headway (at equilibrium). For $i=2$, we have $\textnormal{Acc}(x_{1}-x_2-l,v^{*},v^{*})=0$; thus, $V(x_{1}-x_{2}-l)=v^{*}$ or equivalently $h_2 = h^{*} = V^{-1}(v^{*})$. A straightforward induction argument allows us to deduce that $h_i = h^*$ for all $i \in \{3,\ldots, N+1\}$ as well.
\end{proof}

\begin{remark}
The result of \cref{def: equilibrium} remains valid in the case of infinitely many vehicles (i.e., \(N = \infty\)).
\end{remark}

\section{Long-term behavior of the Bando-FtL model}
\label{sec:sec_two_vehicle}

In this section, we first consider the two-vehicle case, in which the first vehicle drives at a constant velocity \(v^{*} \in [v_{\min}, v_{\max}]\) with \(v_{\min}>V(0)\) starting from the initial time \(t=0\), and the follower is described by the Bando-FtL model given in \cref{defi:bando_acceleration}. The long-term behavior of the Bando-FtL model in the two-vehicle case is illustrated in \cref{fig_2_cars_star}.
We investigate the long-term behavior of the follower by studying suitable energy or Lyapunov functions parameterized or weighted with various parameter assumptions. At the end of this section, we extend our result to the case of finitely many vehicles, with the first vehicle moving at a constant velocity. 

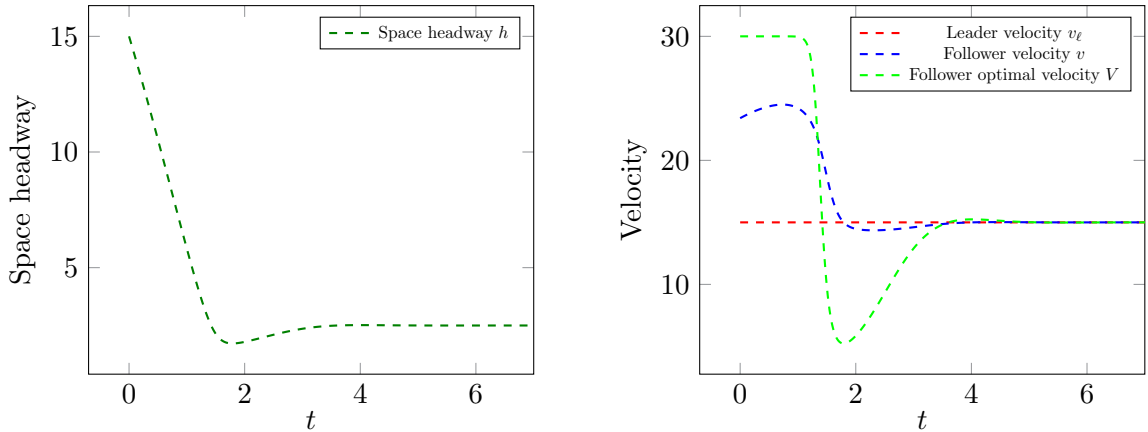
\begin{figure}[!ht]
\centering

\begin{minipage}{0.48\textwidth}
\centering
\begin{tikzpicture}
\begin{axis}[
    width=\textwidth,
    xlabel={$t$},
    ylabel={Space headway},
    legend pos=north east, xmax=7,
    legend style={nodes={scale=0.6, transform shape}},
    xlabel style={yshift=5pt},
    ylabel style={yshift=-10pt},
]
\addplot[color=darkgreen, line width=0.8pt, mark=none, dashed]
table [x index=0, y index=1, col sep=comma]
{Data/two_vehicle_headway_2_star.csv};
\addlegendentry{Space headway $h$}
\end{axis}
\end{tikzpicture}
\end{minipage}
\hfill
\begin{minipage}{0.48\textwidth}
\centering
\begin{tikzpicture}
\begin{axis}[
    width=\textwidth,
    xlabel={$t$},
    ylabel={Velocity},
    legend pos=north east, xmax=7,
    legend style={nodes={scale=0.6, transform shape}},
    xlabel style={yshift=5pt},
    ylabel style={yshift=-10pt},
]
\addplot[color=red,  line width=0.8pt, mark=none, dashed]
table [x index=0, y index=1, col sep=comma]
{Data/Leader_velocity_2_star.csv};
\addplot[color=blue, line width=0.8pt, mark=none, dashed]
table [x index=0, y index=1, col sep=comma]
{Data/Follower_velocity_2_star.csv};
\addplot[color=green, line width=0.8pt, mark=none, dashed]
table [x index=0, y index=1, col sep=comma]
{Data/Follower_opt_velocity_2_star.csv};

\addlegendentry{Leader velocity $v_\ell$}
\addlegendentry{Follower velocity $v$}
\addlegendentry{Follower optimal velocity $V$}
\end{axis}
\end{tikzpicture}
\end{minipage}

\caption{\textbf{Left}: Follower's space headway. \textbf{Right}: Leader and follower velocities with follower optimal velocity.}
\label{fig_2_cars_star}
\end{figure}

The leader's dynamics can be characterized as follows:
\begin{equation}
v_{\ell}(t) = v^{*} ~~\text{and}~~  x_{\ell}(t)=x_{\ell,0} +v^{*} t, \quad \forall t \in \R_{\geq 0}.
\label{eqn: leading_constant_velocity}
\end{equation}
The dynamics of the follower are dictated by the nonlinear ODE system \cref{dyn_2_vehicle} with an initial condition \(\left(x(0),v(0)\right) = (x_0,v_0)\in \R\times\R_{\geq 0}\) chosen such that the follower's initial space headway is \(h(0)=x_{\ell, 0} - x_0 -l > 0\).

\subsection{Asymptotic convergence in the two-vehicle Bando-FtL Model}

We first analyze the local behavior of system \eqref{system_dyn_new} around its equilibrium using linearization \cite{chou2024stability, hayat2023dissipation}. For the two-vehicle case, the system reduces to
\begin{equation}
    \begin{aligned}
        \dot{h}_2(t) & = \Delta v_2(t),\\
        \Delta \dot{v}_2(t) &= -\alpha \bigl(V(h_2(t)) + \Delta v_2(t) - v^{*}\bigr)
                              - \beta \tfrac{\Delta v_2(t)}{h_2^2(t)},
    \end{aligned}
    \label{eqn_new_2}
\end{equation}
where \((\alpha, \beta) \in \mathbb{R}^2_{>0}\) and \(v^{*} \in [v_{\min}, v_{\max}]\). 
By \cref{def: equilibrium}, \((h^{*}, 0) = (V^{-1}(v^{*}), 0)\) is the unique equilibrium of
\cref{eqn_new_2}. The Jacobian matrix \(J\) evaluated at \((h^{*}, 0)\) is
\[
J\big|_{(h^{*},0)} =
\begin{bmatrix}
    0 & 1\\[2pt]
    -\alpha V'(h^{*}) & -\alpha - \tfrac{\beta}{(h^{*})^2}
\end{bmatrix}.
\]
The eigenvalues of $J$ are
\[
\lambda_{1,2}
= \tfrac{
    -\Bigl(\alpha + \tfrac{\beta}{(h^{*})^2}\Bigr)
    \pm 
    \sqrt{
        \Bigl(\alpha + \tfrac{\beta}{(h^{*})^2}\Bigr)^2
        - 4 \alpha V'(h^{*})
    }
}{2}.
\]
Since $\alpha,\beta>0$, the equilibrium $(h^{*},0)$ is locally asymptotically stable if and only if $V'(h^{*})>0$. 

\begin{remark}
   Since this is a local result, it implies asymptotic convergence to the equilibrium \((h^{*}, 0)\) only for initial conditions sufficiently close to equilibrium.
\end{remark}

To investigate globally the long-term behavior of the solution of the ODE system \eqref{system_dyn} with two vehicles, we thus impose the following assumption.
\begin{assumption}
\label{assump_1}
The coefficient \(\beta\) in the Bando-FtL acceleration \cref{defi:ACC} satisfies
\begin{equation}\label{assump:beta}
\beta \geq \max\limits_{h_{\min} \leq h \leq h_{\max}} \big(V'(h) h^2\big), 
\end{equation}
where \(h_{\min}\) and \(h_{\max}\) are as defined in \cref{eqn_h_min,eqn_h_max}, respectively. 
\end{assumption}

We analyze the validity of this assumption by considering the optimal velocity function \(V\) defined in \cref{eq:optimal_velocity_function_example}.

\begin{remark}
    Informally speaking, \cref{assump_1} requires the constant \(\beta\) (measuring the strength of the ``follow-the-leader'' effect) to be moderately large. Define the map \(F\colon \R_{>0} \to \R_{\geq 0}\) with \(F(h) \coloneqq V'(h) h^2\). Considering the optimal velocity function \(V\) given in \cref{eq:optimal_velocity_function_example}, we have for each \(\forall h>0\) that
\begin{align*}
    F(h) = \tfrac{c  v_{\max}}{1+\tanh{(l + d_s)}}\left(1-\tanh^2(ch-d_s)\right) h^2.
\end{align*}
For this particular choice of $V$, the right-hand side of the bound \eqref{assump:beta} can be evaluated explicitly. Since
\begin{align*}
    F'(h) = 2\tfrac{c v_{\max}}{1+\tanh{(l + d_s)}} h \sech^3(d_s-ch) \left(c h \sinh(d_s-ch)+\cosh(d_s-ch)\right),
\end{align*}
the critical points of the function \(F\) located within $\R_{>0}$ must satisfy \(c \hat{h} \tanh(c \hat{h}-d_s) =1\). Moreover, as
\begin{align*}
F''(\hat{h}) & = -\tfrac{2 c v_{\max}}{\tanh(l+d_s)+1} \sech^2(d_s-c\hat{h}) (c \hat{h})^2<0,
\end{align*}
we deduce that \(F\) attains its maximum at \(h=\hat{h}\). Consequently,  \[F(h) \leq  F(\hat{h}) = \tfrac{c v_{\max}}{1+\tanh{(l + d_s)}}\left((\hat{h})^2-(\hat{h} \tanh(c \hat{h}-d_s))^2\right) = \tfrac{c v_{\max}}{1+\tanh{(l + d_s)}}\left((\hat{h})^2-\tfrac{1}{c^2}\right).\]
If we choose a standard set of model parameters \(v_{\max}=10\), \(l =4.5\), \(c=2\), and  \(d_s = 2.5\), then \(\hat{h} \approx 1.432\) and \(F(\hat{h}) \approx 18.01\). Thus, it suffices to impose that \(\beta \geq 18.1\).
\end{remark}

Now we are ready to prove some preliminary lemmas given \cref{assump:beta}. Consider the following six subregions of the region \(\{(v,V) \mid v \in \R_{\geq 0}, V \in \R_{\geq 0}\}\): 
\begin{align}
A \coloneqq & \{(v, V) \in \R_{\geq 0}^2 \mid v^* > v > V\}, ~~~~ B \coloneqq \{(v, V)\in \R_{\geq 0}^2 \mid v^{*} > V > v\},  \label{eqn: A_B} \\
C \coloneqq & \{(v, V) \in \R_{\geq 0}^2 \mid V > v^* > v\},~~~~ D \coloneqq \{(v, V)\in \R_{\geq 0}^2 \mid V > v > v^*\}, \label{eqn: C_D} \\  
E \coloneqq & \{(v, V)\in \R_{\geq 0}^2 \mid v > V >v^*\},~~~~ F \coloneqq \{(v, V)\in \R_{\geq 0}^2 \mid v > v^{*} > V\}. \label{eqn: E_F}
\end{align}
Refer to \cref{fig:phase_regions_and_decomposition} for an illustration of these six subregions. \Cref{fig:regions_trajectory} depicts a trajectory of the pair \((V,v)\) traversing some of the subregions, governed by \cref{dyn_2_vehicle,eqn: leading_constant_velocity}, starting from the green point in region \(D\) and converging to the red point \((v^{*}, v^{*})\). For any admissible initial datum \((x_0, v_0, x_{\ell, 0}) \in \R \times \R_{\geq 0} \times \R\) associated with \cref{eqn: leading,dyn_2_vehicle}, \((v_0, V(h_0))\) lies in one of the above six subregions or on their boundaries with \(h_0 = x_{\ell, 0} - x_0 - \ell\). 

\begin{figure}[!ht]
\centering

\newcommand{\plotsize}{6.2cm}

\def\arrowlen{0.65}

\def\XLabShiftX{-4pt}
\def\XLabShiftY{0pt}
\def\YLabShiftX{0pt}
\def\YLabShiftY{-4pt}
\def\xmaxR{6.2}
\def\ymaxR{6.2}
\def\vstarR{2.657}
\def\xmaxL{35}
\def\ymaxL{35}
\def\vstarL{15.0}

\begin{minipage}{0.48\textwidth}
\centering

\pgfmathsetmacro{\rightscale}{\plotsize/\xmaxR} 

\begin{tikzpicture}[>=Latex][
  width=\plotsize,
  height=\plotsize,
  scale only axis,
  xmin=0, xmax=\xmaxL,
  ymin=0, ymax=\ymaxL,
  axis lines=left,
  axis line style={->, line width=1.1pt},
  xtick=\empty, ytick=\empty,
  clip=false, 
  axis on top, 
xlabel style={at={(axis description cs:1,0.08)}, anchor=west},
ylabel style={
  rotate=270,
  at={(axis description cs:0.2,1)},
  anchor=south, xshift=0, yshift=0pt},
]

\fill[gray!20] (0,0) -- (0,\vstarR) -- (\vstarR,\vstarR) -- cycle;
\fill[gray!35] (0,0) -- (\vstarR,0) -- (\vstarR,\vstarR) -- cycle;
\fill[gray!45] (\vstarR,0) -- (\xmaxR,0) -- (\xmaxR,\vstarR) -- (\vstarR,\vstarR) -- cycle;
\fill[gray!60] (\vstarR,\vstarR) -- (\xmaxR,\vstarR) -- (\xmaxR,\ymaxR) -- cycle;
\fill[gray!30] (0,\vstarR) -- (0,\ymaxR) -- (\xmaxR,\ymaxR) -- (\vstarR,\vstarR) -- cycle;
\fill[gray!50] (0,\vstarR) -- (\vstarR,\vstarR) -- (\vstarR,\ymaxR) -- (0,\ymaxR) -- cycle;

\draw[->, line width=1.1pt] (0,0) -- (\xmaxR,0)
  node[anchor=west] {$V$};

\draw[->, line width=1.1pt] (0,0) -- (0,\ymaxR)
  node[anchor=south] {$v$};

\node[below] at (\vstarR,0) {$v^{*}$};
\node[left]  at (0,\vstarR) {$v^{*}$};

\draw[red, densely dotted, line width=1.2pt] (0,\vstarR) -- (\xmaxR,\vstarR);
\draw[red, densely dotted, line width=1.2pt] (\vstarR,0) -- (\vstarR,\ymaxR);
\draw[red, densely dotted, line width=1.2pt] (0,0) -- (\xmaxR,\ymaxR);

\node at (0.65*\vstarR,0.35*\vstarR) {\Large $B$};
\node at ({\vstarR + 0.55*(\xmaxR-\vstarR)},{0.35*\vstarR}) {\Large$C$};
\node at ({0.45*\vstarR},{\vstarR + 0.70*(\ymaxR-\vstarR)}) {\Large $F$};
\node at ({\vstarR + 0.55*(\xmaxR-\vstarR)},{\vstarR + 0.80*(\ymaxR-\vstarR)}) {\Large $E$};
\node at ({0.35*\vstarR},{0.70*\vstarR}) {\Large $A$};
\node at ({\vstarR + 0.70*(\xmaxR-\vstarR)},{\vstarR + 0.55*(\ymaxR-\vstarR)}) {\Large $D$};

\draw[->, blue] (0.5*\vstarR,\vstarR) -- ($(0.5*\vstarR,\vstarR)!\arrowlen!(0.5*\vstarR,\vstarR-0.5)$);
\draw[->, blue] (1.55,1.55) -- ($(1.55,1.55)!\arrowlen!(1.65,2.05)$);

\draw[->, blue] (4.65,4.65) -- ($(4.65,4.65)!\arrowlen!(4.55,4.05)$);
\draw[->, blue] (3.675,4.55) -- ($(3.675,4.55)!\arrowlen!(3.475,4.15)$);

\draw[->, blue] (2.225,1.55) -- ($(2.425,1.55)!\arrowlen!(2.429,1.908)$);
\draw[->, blue] (\vstarR,0.5*\vstarR) -- ($(\vstarR,0.5*\vstarR)!\arrowlen!(\vstarR+0.5,0.5*\vstarR)$);

\draw[->, blue] (1.5 * \vstarR,0.7 * \vstarR) -- ($(1.5 * \vstarR,0.7 * \vstarR)!\arrowlen!(1.5 * \vstarR + 0.3,0.7 * \vstarR + 0.4)$);
\draw[->, blue] (4.65,\vstarR) -- ($(4.65,\vstarR)!\arrowlen!(4.65,\vstarR+0.5)$);
\draw[->, blue] (\vstarR,4.65) -- ($(\vstarR,4.65)!\arrowlen!(\vstarR-0.5,4.65)$);
\draw[->, blue] (0.51*\vstarR,4.40) -- ($(0.51*\vstarR,4.40)!\arrowlen!(0.51*\vstarR-0.3,4.05)$);

\fill[red] (\vstarR,\vstarR) circle (1.1pt);

\end{tikzpicture}

\caption{Six subregions of the \((V,v)\) plane.}
\label{fig:phase_regions_and_decomposition}
\end{minipage}
\hfill
\begin{minipage}{0.48\textwidth}
\centering

\def\xmaxL{35}
\def\ymaxL{35}
\def\vstarL{15.0}

\begin{tikzpicture}[>=Latex]
\begin{axis}[
  width=\plotsize,
  height=\plotsize,
  scale only axis,
  xmin=0, xmax=\xmaxL,
  ymin=0, ymax=\ymaxL,
  axis lines=left,
  axis line style={->, line width=1.1pt},
  xtick=\empty, ytick=\empty,
  clip=false, 
  axis on top, 
xlabel={$V$},
ylabel={$v$},
xlabel style={at={(axis description cs:1,0.08)}, anchor=west},
ylabel style={
  rotate=270,
  at={(axis description cs:0.2,1)},
  anchor=south, xshift=0, yshift=0pt},
]

 \fill[gray!20] (axis cs:0,0) -- (axis cs:0,\vstarL) -- (axis cs:\vstarL,\vstarL) -- cycle;
 \fill[gray!35] (axis cs:0,0) -- (axis cs:\vstarL,0) -- (axis cs:\vstarL,\vstarL) -- cycle;
 \fill[gray!45] (axis cs:\vstarL,0) -- (axis cs:\xmaxL,0) -- (axis cs:\xmaxL,\vstarL) -- (axis cs:\vstarL,\vstarL) -- cycle;
 \fill[gray!60] (axis cs:\vstarL,\vstarL) -- (axis cs:\xmaxL,\vstarL) -- (axis cs:\xmaxL,\ymaxL) -- cycle;
 \fill[gray!30] (axis cs:0,\vstarL) -- (axis cs:0,\ymaxL) -- (axis cs:\xmaxL,\ymaxL) -- (axis cs:\vstarL,\vstarL) -- cycle;
 \fill[gray!50] (axis cs:0,\vstarL) -- (axis cs:\vstarL,\vstarL) -- (axis cs:\vstarL,\ymaxL) -- (axis cs:0,\ymaxL) -- cycle;


\node[below] at (axis cs:\vstarL,0) {$v^{*}$};
\node[left] at (axis cs:0,\vstarL) {$v^{*}$};

\draw[red, densely dotted, line width=1.2pt] (axis cs:0,\vstarL) -- (axis cs:\xmaxL,\vstarL);
\draw[red, densely dotted, line width=1.2pt] (axis cs:\vstarL,0) -- (axis cs:\vstarL,\ymaxL);
\draw[red, densely dotted, line width=1.2pt] (axis cs:0,0) -- (axis cs:\xmaxL,\ymaxL);

\node at (axis cs:{0.65*\vstarL},{0.35*\vstarL}) {\Large $B$};
\node at (axis cs:{\vstarL + 0.55*(\xmaxL-\vstarL)},{0.35*\vstarL}) {\Large $C$};
\node at (axis cs:{0.45*\vstarL},{\vstarL + 0.70*(\ymaxL-\vstarL)}) {\Large $F$};
\node at (axis cs:{\vstarL + 0.55*(\xmaxL-\vstarL)},{\vstarL + 0.80*(\ymaxL-\vstarL)}) {\Large $E$};
\node at (axis cs:{0.35*\vstarL},{0.70*\vstarL}) {\Large $A$};
\node at (axis cs:{\vstarL + 0.70*(\xmaxL-\vstarL)},{\vstarL + 0.55*(\ymaxL-\vstarL)}) {\Large $D$};

\addplot[blue, thick] table [x=V, y=v, col sep=comma] {Data/PhaseTrajectory_V_v.csv};

\addplot[
  only marks, mark=*, mark size=2.8pt, mark options={draw=green, fill=green}] table [x=V, y=v, col sep=comma] {Data/InitPoint_V_v.csv};

\addplot[only marks, mark=*, mark size=3pt, red] coordinates {(\vstarL,\vstarL)};

\end{axis}
\end{tikzpicture}

\caption{Trajectory and regions in the $(V,v)$ plane.}
\label{fig:regions_trajectory}
\label{fig:decomposition_of_regions}
\end{minipage}

\end{figure}

Suppose the initial datum \((x_0, v_0, x_{\ell,0})\in \R\times\R_{\ge0}\times\R\) associated with \cref{eqn: leading,dyn_2_vehicle} satisfies either
\[
(v_0,V(h_0))\in 
B \coloneqq \{(v,V)\in\R_{\ge0}^2 \mid v^{*}>V>v\}
\]
or
\[
(v_0,V(h_0))\in 
E \coloneqq \{(v,V)\in\R_{\ge0}^2 \mid v>V>v^{*}\}.
\]
Then one of the following two scenarios occurs:
\begin{enumerate}
    \item The pair \((v(t),V(h(t)))\) remains in its initial region for all time,
    \[
    (v(t),V(h(t)))\in B,\ \forall t\ge0
    \quad\text{or}\quad
    (v(t),V(h(t)))\in E,\ \forall t\ge0.
    \]
    \label{case:BE_1}
\item The pair \((v(t),V(h(t)))\) reaches the boundary of its initial region in finite time, that is, there exists a \(\hat{t}>0\) such that
    \[
    (v(\hat{t}),V(h(\hat{t})))\in \partial B
    \quad\text{or}\quad
    (v(\hat{t}),V(h(\hat{t})))\in \partial E,
    \]
    where
    \[
    \partial B
    =
    \{(v,V)\in\R_{\ge0}^2 \mid V=v^{*}\ge v\}
    \cup
    \{(v,V)\in\R_{\ge0}^2 \mid v=V<v^{*}\}
    \]
    and
    \[
    \partial E 
    =
    \{(v,V)\in\R_{\ge0}^2 \mid V=v^{*}\le v\}
    \cup
    \{(v,V)\in\R_{\ge0}^2 \mid v=V>v^{*}\}.
    \]
    \label{case:BE_2}
\end{enumerate}

If the pair \((v,V)\) stays within its initial region \(B\) or \(E\) for any finite time \(t\in \R_{\geq 0}\), as in the first scenario, then we have the following.
\begin{lemma}[{exponential convergence to equilibrium in regions \(B\) and \(E\)}]
Let the optimal velocity function \(V\) satisfy \cref{eq:assumption_V}, let the vehicle length be \(l>0\), and let \cref{assum_leading_const_v} hold. The follower's dynamics are governed by \cref{dyn_2_vehicle}.
 Suppose the initial condition \((x_0,x_{\ell,0},v_0)\in\R^2\times\R_{\ge0}\) satisfies either
\begin{equation}
\label{condition_B}
(v_0, V(h_0))\in B \coloneqq \{(v,V)\in\R_{\ge0}^2 \mid v^{*}>V>v\}
\quad\text{and}\quad
(v(t),V(h(t)))\in B,\ \forall t\ge0
\end{equation}
or
\begin{equation}
\label{condition_E}
(v_0, V(h_0))\in E \coloneqq \{(v,V)\in\R_{\ge0}^2 \mid v>V>v^{*}\}
\quad\text{and}\quad
(v(t),V(h(t)))\in E,\ \forall t\ge0.
\end{equation}
Then 
\begin{align}
    \lim \limits_{t\to \infty} v(t) = \lim \limits_{t \to \infty} V(h(t)) = v^{*}, \text{ and } \lim \limits_{t\to \infty} h(t) = h^{*} \coloneqq V^{-1}(v^{*}). 
    \label{v_h_limits}
\end{align}
Moreover, the convergence is exponential: there exist constants \(C>0\) and
\[
\lambda \coloneqq \min\Bigl\{\alpha+\tfrac{\beta}{h_0^2},\ \tfrac{\beta}{(h^{*})^2}\Bigr\}
\]
such that for all \(t\ge0\),
\begin{align}
    |v(t)-v^{*}| + |V(h(t))-v^{*}| \le C\,\mathrm{e}^{-\lambda t}.
    \label{eq:exp_decay_v_V}
\end{align}
In particular, \(h(t)\to h^{*}\) exponentially as \(t\to\infty\).
\label{lem:BE}
\end{lemma}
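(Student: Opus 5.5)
The plan is to exploit the sign structure that membership in \(B\) or \(E\) forces on the right-hand side of \cref{dyn_2_vehicle} with \(v_\ell\equiv v^{*}\). First I get monotonicity and convergence. Then I derive a scalar differential inequality for the velocity error and apply Gr\"onwall's inequality. Finally I transfer the decay to \(V(h)-v^{*}\) using the ordering of \(v\), \(V(h)\), \(v^{*}\) inside the region. The inequality I derive gives the rate \(\lambda\) in \(B\), but in \(E\) it only gives \(\beta/h_0^2\), which is smaller than the stated \(\lambda\); this is the main obstacle.

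\emph{Region \(B\).} Here \(v<V(h)<v^{*}\) for all \(t\ge 0\). Since \(V\) is strictly increasing, this gives \(h(t)<h^{*}\). Also \(\dot h=v^{*}-v>0\), so \(h\) is increasing and \(h_0\le h(t)<h^{*}\). Moreover
\[
\dot v=\alpha\bigl(V(h)-v\bigr)+\beta\tfrac{v^{*}-v}{h^{2}}>0,
\]
so \(v\) is increasing and bounded by \(v^{*}\). Hence \(h\) and \(v\) have limits, \(h\) and \(v\) are Lipschitz, and \(\dot h,\dot v\) are uniformly continuous. Barbalat's lemma then gives \(\dot h,\dot v\to 0\). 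So the limit is an equilibrium, and by \cref{def: equilibrium} it equals \((h^{*},v^{*})\), which proves \cref{v_h_limits}.

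For the rate in \(B\), set \(w\coloneqq v^{*}-v>0\). Then
\[
\dot w=-\alpha\bigl(V(h)-v\bigr)-\beta\tfrac{w}{h^{2}}\le-\tfrac{\beta}{(h^{*})^{2}}\,w,
\]
using \(V(h)>v\) and \(h<h^{*}\). Gr\"onwall's inequality gives \(w(t)\le w_0\mathrm e^{-\beta t/(h^{*})^{2}}\). Since \(v<V(h)<v^{*}\), we have \(0<v^{*}-V(h)<w\), so \cref{eq:exp_decay_v_V} holds with rate \(\beta/(h^{*})^{2}\ge\lambda\). Exponential convergence of \(h\) follows from the mean value theorem: \(V'\) is bounded below by a positive constant on the compact interval \([h_0,h^{*}]\).

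\emph{Region \(E\).} Here \(v>V(h)>v^{*}\), so \(h>h^{*}\) and \(\dot h<0\). Thus \(h^{*}<h(t)\le h_0\) and \(h\) is decreasing. Also \(\dot v<0\) with \(v>v^{*}\), so \(v\) is decreasing and bounded below. The limit argument is then identical to the one in \(B\). With \(w\coloneqq v-v^{*}>0\),
\[
\dot w=-\alpha\bigl(v-V(h)\bigr)-\beta\tfrac{w}{h^{2}}\le-\tfrac{\beta}{h_0^{2}}\,w,
\]
and again \(0<V(h)-v^{*}<w\). This only yields the rate \(\beta/h_0^{2}\), which is strictly smaller than both entries of \(\lambda\) (note \(h^{*}<h_0\) in \(E\)).

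To reach \(\alpha+\beta/h_0^{2}\) in \(E\), I would write the equation as \(\dot w=-(\alpha+\beta/h^{2})w+\alpha u\), where \(u\coloneqq V(h)-v^{*}\in(0,w)\) satisfies \(\dot u=-V'(h)w\). I would first try a weighted Lyapunov function \(\mathcal E=w^{2}+\kappa u^{2}\). The weight \(\kappa\) would be chosen, via \cref{assump_1} (\(\beta\ge V'(h)h^{2}\)), so that the cross term \(\alpha uw\) is absorbed by \(-(\alpha+\beta/h^{2})w^{2}\), with the remaining negativity coming from the \(u\)-equation. If that fails, I would settle for the rate \(\beta/h_0^{2}\) in \(E\). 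The qualitative statement that \(h\to h^{*}\) exponentially still holds, because \(V'\) is bounded below on \([h^{*},h_0]\).
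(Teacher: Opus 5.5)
\textbf{Region \(B\).} Your argument is correct and is essentially the paper's. The paper uses the same inequality \(\tfrac{\dd}{\dd t}(v^*-v)\le-\tfrac{\beta}{(h^*)^2}(v^*-v)\) together with Gr\"onwall's inequality. Its extra lower bound with rate \(\alpha+\beta/h_0^2\) plays no role, since \(h_0<h^*\) in \(B\) forces \(\lambda=\beta/(h^*)^2\). Your Barbalat step is harmless but redundant, because the exponential bound already gives the limits.

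\textbf{Region \(E\): the gap.} Your diagnosis is right, and it exposes a step the paper glosses over. The paper only says \(E\) is ``analogous,'' and the literal analogue (bounding \(h\le h_0\)) gives exactly your rate \(\beta/h_0^2<\lambda\). But your proposal does not close this gap.
\begin{itemize}
\item Settling for \(\beta/h_0^2\) proves a weaker statement than the lemma.
\item The condition \(\beta\ge V'(h)h^2\) you want to invoke is not a hypothesis of this lemma.
\item The weighted Lyapunov plan aims at the wrong entry of the minimum: decay at rate \(\alpha+\beta/h_0^2\) is false in general.
\end{itemize}
To see the last point, use \(u=V(h)-v^*\) and \(w=v-v^*\). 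Then \(\dot u=-V'(h)w\) and \(\dot w=\alpha u-(\alpha+\beta/h^2)w\). Suppose \(V'(h^*)<\beta/(h^*)^2\). The linearization at the equilibrium then has real eigenvalues \(-\mu_f<-\mu_s<0\) with \(\mu_s+\mu_f=\alpha+\beta/(h^*)^2\). The eigenvector for \(-\mu_f\) satisfies \(0<u/w=V'(h^*)/\mu_f<1\), i.e.\ it points into \(E\). Trajectories on the corresponding strong stable manifold therefore stay in \(E\) and decay only like \(\mathrm{e}^{-\mu_f t}\). Now \(\mu_f=\alpha+\beta/(h^*)^2-\mu_s\), which is strictly less than \(\alpha+\beta/h_0^2\) once \(\beta/(h^*)^2-\beta/h_0^2<\mu_s\), i.e.\ once \(h_0\) is close enough to \(h^*\). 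So no bound \(C\mathrm{e}^{-(\alpha+\beta/h_0^2)t}\) can hold for such trajectories.

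\textbf{The missing idea.} Use the freedom that \(C\) may depend on the trajectory, and keep the time-dependent coefficient in Gr\"onwall's inequality. This lets you reach the other entry of the minimum, \(\beta/(h^*)^2\ge\lambda\).
\begin{enumerate}
\item From \(\dot w\le-\beta w/h^2\) you get \(w(t)\le w_0\exp\bigl(-\beta\int_0^t h(s)^{-2}\dd s\bigr)\).
\item Your first pass gives \(w(t)\le w_0\mathrm{e}^{-\beta t/h_0^2}\).
\item Let \(m\coloneqq\min_{[h^*,h_0]}V'>0\). The mean value theorem and \(0<V(h)-v^*<w\) give \(0<h(t)-h^*<w(t)/m\), so \(h-h^*\) is integrable.
\item For \(h>h^*\) one has \((h^*)^{-2}-h^{-2}\le 2(h-h^*)/(h^*)^3\). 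Hence \(K\coloneqq\int_0^\infty\bigl((h^*)^{-2}-h(s)^{-2}\bigr)\dd s\) is finite.
\item Therefore \(w(t)\le w_0\mathrm{e}^{\beta K}\mathrm{e}^{-\beta t/(h^*)^2}\le w_0\mathrm{e}^{\beta K}\mathrm{e}^{-\lambda t}\).
\end{enumerate}
Combined with \(0<V(h)-v^*<w\), this proves the lemma in \(E\) with \(C=2w_0\mathrm{e}^{\beta K}\).
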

\begin{proof}
The argument for \eqref{condition_E} is analogous to that for \eqref{condition_B}. Hence, we only present the proof under \eqref{condition_B}.
Assume that the initial datum \((x_0, x_{\ell, 0}, v_0) \in \R^2 \times \R_{\geq 0} \) associated with \cref{eqn: leading,dyn_2_vehicle} is such that \((v_0, V(h_0)) \in B\) and \((v(t), V(h(t))) \in B \coloneqq \{(v,V)\in\R_{\ge0}^2 \mid v^{*}>V>v\}, \forall~ t \in \R_{\geq 0}\).  By system \eqref{dyn_2_vehicle} and \cref{eqn: leading_constant_velocity}, we have, for all \(t \in \R_{\geq 0}\), 
\begin{align*}
    \dot{v}(t)
    &= \alpha\big(V(h(t)) - v(t)\big)
      + \beta \tfrac{v^{*}-v(t)}{h^2(t)}. \\
    \intertext{Since \(\alpha >0\) and because \((v(t),V(h(t)))\in B\) implies \(V(h(t))<v^{*}, \forall t \in \R_{\geq 0}\), we obtain}
    \dot{v}(t) &\le \alpha\big(v^{*}-v(t)\big)
      + \beta \tfrac{v^{*}-v(t)}{h^2(t)}. \\
    \intertext{Moreover, in region \(B\), we have \(h'(t)=v^{*}-v(t)>0\) for all \(t\ge0\), and hence \(h(t)\ge h_0\) for all \(t\ge0\). Therefore,}
    \dot{v}(t) &\le \alpha\big(v^{*}-v(t)\big)
      + \beta \tfrac{v^{*}-v(t)}{h_0^2} = \Bigl(\alpha + \tfrac{\beta}{h_0^2}\Bigr)\bigl(v^{*}-v(t)\bigr).
\end{align*}
In addition, \(\alpha >0\) and \((v(t),V(h(t)))\in B\) implies that 
\begin{align*}
  \dot{v}(t) &= \alpha\big(V(h(t))-v(t)\big)+\beta\tfrac{v^*-v(t)}{h^2(t)} \geq \beta\tfrac{v^*-v(t)}{h^2(t)}\\
  \intertext{by the monotonicity of the optimal velocity function \(V\) and \(v^{*}>V\) in region \(B\), we have \(h(t) < h^* \coloneqq V^{-1}(v^{*})\) for all \(t \in \R_{\geq 0}\), 
}
  \dot{v}(t) & \geq  \beta\tfrac{v^*-v(t)}{(h^{*})^2}.
\end{align*}

Therefore, for all \(t\in\R_{\geq0}\), we have
\begin{equation*}
-\big(\alpha + \tfrac{\beta}{h_0^2}\big)(v^* - v(t)) \leq \tfrac{\dd}{\dd t} (v^* - v(t)) \leq -\tfrac{\beta}{(h^*)^2}(v^* - v(t)),
\end{equation*}
leading us to the following two-sided bound on \(v^* - v\) by Gr\"{o}nwall's inequality: 
\begin{equation}\label{eq:two_sided}
(v^* - v_0) \exp\Big(-\big(\alpha + \tfrac{\beta}{h_0^2}\big)t\Big) \leq v^* - v(t) \leq (v^* - v_0)\exp\Big(-\tfrac{\beta}{(h^*)^2}t\Big), \ \forall t \in \R_{\geq 0}. 
\end{equation}
Notice that in region \(B\), 
\begin{equation}0<v^{*}-V(h(t)) < v^{*}-v(t)
\label{eqn_v_star_V}
\end{equation}
because \(v^* > V(h(t))> v(t) \ \forall t \in \R_{\geq 0}\). 
 Let 
\(
\lambda \coloneqq \min\Bigl\{\alpha+\tfrac{\beta}{h_0^{2}},\ \tfrac{\beta}{(h^{*})^{2}}\Bigr\}.
\)
Then \cref{eq:two_sided,eqn_v_star_V} imply that 
\[
|v(t)-v^{*}| \le (v^{*}-v_0)\,\mathrm{e}^{-\lambda t},
\qquad
|V(h(t))-v^{*}|
\le (v^{*}-v_0)\,\mathrm{e}^{-\lambda t}.
\]
Thus, setting \(C = 2\,(v^{*} - v_0)\), we obtain \cref{eq:exp_decay_v_V}, which implies \(v(t) \to v^{*}\) and \(V(h(t)) \to v^{*}\) as \(t \to \infty\), both of which are exponential with convergence rate \(\lambda\). 
Since \(V\) is invertible, it follows that \(h(t)\to V^{-1}(v^{*}) = h^{*}\) exponentially as  \(t\to\infty\) exponentially. 

\end{proof}

Since the boundaries of regions \(B\) and \(E\) share portions with the boundaries of \(A\), \(C\), \(D\), and \(F\), we treat case~\ref{case:BE_2} jointly with the situations in which the pair \((v,V)\) initially lies in \(A\), \(C\), \(D\), or \(F\) or on their respective boundaries.

\begin{lemma}[behavior of \((v,V)\) in regions \(A\), \(C\), \(D\), and \(F\) and on their boundaries]
\label{lem:AD}
Let the conditions of \cref{lem:BE} hold. 
Suppose that the solution of \cref{dyn_2_vehicle} with initial datum 
\((x_0,x_{\ell,0},v_0)\in \R^2\times\R_{\ge0}\) satisfies
\begin{equation}
\label{eqn_v_V_ACDF}
(v(\hat t),V(h(\hat t)))\in A\cup C\cup D\cup F
\quad\text{or}\quad
(v(\hat t),V(h(\hat t)))\in \partial(A\cup C\cup D\cup F)
\end{equation}
for some \(\hat t<\infty\), where the regions are defined in \cref{eqn: A_B,eqn: C_D,eqn: E_F}. 
Then there exists a \(\tilde t\ge \hat t\) such that
\[
(v(t),V(h(t)))\in A\cup D\cup\{(v^{*},v^{*})\},
\qquad \forall\, t\ge \tilde t.
\]
\end{lemma}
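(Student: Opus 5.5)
The plan is a phase-plane analysis in the variables \((v,W)\) with \(W(t)\coloneqq V(h(t))\). By \cref{dyn_2_vehicle,eqn: leading_constant_velocity} these satisfy
\[
\dot W = V'(h)\,(v^{*}-v),\qquad \dot v=\alpha(W-v)+\beta\tfrac{v^{*}-v}{h^{2}} .
\]
By \cref{coro:cor1,theorem: h_max} we have \(h(t)\in[h_{\min},h_{\max}]\) for all \(t\). Hence \cref{assump_1} gives \(\beta/h^{2}\ge V'(h)\) along the whole trajectory, and therefore
\begin{equation*}
\tfrac{\dd}{\dd t}(v-W)=\alpha(W-v)+(v^{*}-v)\Bigl(\tfrac{\beta}{h^{2}}-V'(h)\Bigr).
\end{equation*}
The proof then has three steps: show that \(A\) and \(D\) are forward invariant, show that \(C\) and \(F\) are left in finite time into \(D\) and \(A\), and show that from the boundary pieces the flow enters \(A\cup C\cup D\cup F\), which reduces to the first two steps.

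\textbf{Step 1 (invariance of \(A\) and \(D\)).} For \(A\) (where \(W<v<v^{*}\)) the trajectory can only leave through \(\{v=W<v^{*}\}\) or \(\{v=v^{*}>W\}\); the face \(\{W=v^{*}\}\) cannot be reached without first reaching \(v=v^{*}\).
\begin{itemize}
\item On \(v=W<v^{*}\), the displayed identity gives \(\tfrac{\dd}{\dd t}(v-W)=(v^{*}-v)(\beta/h^{2}-V'(h))\ge0\), so \(v-W\) cannot become negative.
\item On \(v=v^{*}>W\), we get \(\dot v=\alpha(W-v^{*})<0\), so \(v\) is pushed back below \(v^{*}\).
\end{itemize}
The only remaining possibility is that \(v\) and \(W\) reach \(v^{*}\) simultaneously, that is, the trajectory hits the point \((v^{*},v^{*})\). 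That point is the equilibrium (\cref{def: equilibrium}), so by uniqueness it stays there. The same computation with reversed signs shows that \(D\cup\{(v^{*},v^{*})\}\) is forward invariant.

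\textbf{Step 2 (\(C\) and \(F\) are transient).} Suppose \((v,W)\in C\) on \([\hat t,t)\), so \(W>v^{*}>v\). Then \(\dot h=v^{*}-v>0\), so \(W\) is nondecreasing and \(W(s)\ge W(\hat t)>v^{*}\). This gives
\[
\dot v\ge\alpha(W-v)\ge\alpha\bigl(W(\hat t)-v^{*}\bigr)>0 .
\]
Since this lower bound on \(\dot v\) is a fixed positive constant, \(v\) reaches \(v^{*}\) in finite time, while \(W>v^{*}\) persists. At that moment \(\dot v=\alpha(W-v^{*})>0\), so the trajectory enters \(D\) and stays there by Step 1. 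Symmetrically, in \(F\) we have \(W\) nonincreasing and \(\dot v\le\alpha(W(\hat t)-v^{*})<0\), so the trajectory enters \(A\) in finite time. If \(\hat t\) is such that the pair is in \(A\) or \(D\) already, take \(\tilde t=\hat t\).

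\textbf{Step 3 (boundary points).} The boundary points in \cref{eqn_v_V_ACDF} are handled case by case, and this is where I expect the main obstacle.
\begin{itemize}
\item On \(\{W=v^{*}>v\}\), we have \(\dot W=V'(h)(v^{*}-v)>0\), so the trajectory enters \(C\) immediately.
\item On \(\{W=v^{*}<v\}\), we have \(\dot W<0\), so the trajectory enters \(F\).
\item On \(\{v=v^{*}\ne W\}\), the sign of \(\dot v=\alpha(W-v^{*})\) sends the trajectory into \(A\) or \(D\).
\item On the diagonal \(\{v=W\ne v^{*}\}\), the first derivative \(\tfrac{\dd}{\dd t}(v-W)\) may vanish when \(\beta=V'(h)h^{2}\). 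There I would compute the second derivative, or argue directly: \(\dot v=\beta(v^{*}-v)/h^{2}\ne0\) has the sign of \(v^{*}-v\), and \(v-W\) is nondecreasing near the lower part of the diagonal (nonincreasing near the upper part). Together these rule out the trajectory slipping into \(B\) (resp. \(E\)), so it lies in \(\bar A\) (resp. \(\bar D\)).
\end{itemize}
Combining the three steps gives a \(\tilde t\ge\hat t\) with \((v(t),V(h(t)))\in A\cup D\cup\{(v^{*},v^{*})\}\) for all \(t\ge\tilde t\). Here \(A\) and \(D\) should be read as including the diagonal points reached in the degenerate equality case.
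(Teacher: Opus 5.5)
Your proposal is correct and follows essentially the paper's route. The ingredients match: forward invariance of $A$ and $D$, using $\beta\ge V'(h)h^2$ on the diagonal faces and the sign of $\dot v$ on $\{v=v^*\}$; finite-time exit from $C$ and $F$; and a case check of the boundary pieces. For the exit step, your direct linear bound $|\dot v|\ge\alpha\,|V(h(\hat t))-v^*|$, which comes from the monotonicity of $V(h)$ there, is the same bound the paper reaches, but the paper embeds it in a contradiction argument whose exponential-decay part is superfluous.

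Your caveat on the equality case is well founded and more careful than the paper. The paper's claim that $v$ increases ``faster'' than $V$ on the diagonal rests only on a non-strict inequality. Suppose $V'(h)h^2=\beta$ on an interval $I\ni h^*$, and the trajectory starts on the diagonal with $v_0=V(h_0)$, $h_0\in I$. Then $u=v-V(h)$ satisfies $\dot u=-\alpha u$, so the trajectory never leaves the diagonal and the literal conclusion of the lemma fails. One must therefore enlarge $A$ and $D$ by the diagonal, as you do, or make \cref{assump_1} strict. This weaker conclusion still suffices for \cref{thm_E_equilibrium}, since $\dot{\mathcal{E}}\le 0$ on the diagonal as well.
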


\begin{proof}
Without loss of generality, we redefine the initial time to be \(\hat{t}\), the first time at which the pair \((v(t),V(h(t)))\) enters one of the regions \(A\), \(C\), \(D\), \(F\), or their boundaries.
Let \((x_0, v_0, x_{\ell, 0}) \in \R \times \R_{\geq 0} \times \R\) be given with initial space headway \(h_0 = x_{\ell, 0} - x_0 -l\). We consider several cases separately.
\begin{description}[style=nextline]
\item[Step 0: Starting from \(\{(v^{*}, v^{*})\}\) ]

First, if \((v_0, V(h_0)) = (v^{*}, v^{*})\), then 
\begin{equation}
\dot{v}(0) = \alpha\left(V(h_0)-v_0\right)+\beta\tfrac{v^*-v_0}{h_0^{2}} =0 \text{ and } \dot{h}(0) = v^{*} - v_0 =0.
\label{eqn: equi_star}
\end{equation}
Therefore, \(h(t) = h_0 = h^{*}~ \forall  t \in [0, +\infty)\), which implies that \((v_0, V(h_0)) = (v^{*}, v^{*}) \ \forall t \in [0, +\infty)\).

\item[Step 1: Starting from region \(A\)]

We claim that the region \((A\coloneqq \{(v, V) \in \R_{\geq 0}^2 \mid v^* > v > V\} )\cup \{(v^{*}, v^{*})\}\) in \cref{fig:decomposition_of_regions} is (forward) invariant with respect to the dynamics of system \eqref{dyn_2_vehicle}. 
Indeed, it suffices to investigate the behavior of \(v\) and \(V\) along the boundaries \(\partial A\) of region \(A\),
\begin{align*}
    \partial A = \big\{(v, V)\in \R_{\geq 0}^2 \mid v=v^{*}>V \text{ or } v=V<v^{*} \text{ or } V=0 \wedge v < v^{*} \text{ or } v=V=v^{*}\big\}.
\end{align*}
We have three cases:
\begin{enumerate}
    \item If \(v_0 = v^* > V(h_0)\), then the acceleration of the follower at the initial time will be negative since
\begin{align*}
\dot{v}(0) = \alpha\left(V(h_0)-v_0\right)+\beta\tfrac{v^*-v_0}{h_0^{2}} = \alpha\left(V(h_0)-v_0\right) < 0.
\end{align*} 
Thus, starting from the set \(\{(v, V)\in \R_{\geq 0}^2 \mid v=v^{*}>V\}\), the pair \((v, V)\) enters region \(A\).  

\item If \(v_0 = V(h_0) < v^*\), then the acceleration of the follower at the initial time then satisfies 
\begin{align}
\dot{v}(0) &= \alpha\left(V(h_0)-v_0\right)+\beta\tfrac{v^*-v_0}{h_0^{2}} = \tfrac{\beta}{h_0^{2}}\left(v^*-v_0\right)  \geq V'(h_0)(v^{*} - v_0) \label{acc_A_2} =\tfrac{dV}{dt}(h(t))\vert_{t=0}>0,
\end{align}
which follows from assumption \eqref{assump:beta}.
Hence, \(v\) is increasing faster than \(V\) when the pair \((v, V)\) hits the set \(\{(v, V)\in \R_{\geq 0}^2 \mid \ v=V<v^{*} \}\). Therefore, the ODE system \eqref{dyn_2_vehicle} prevents the pair \((v, V)\) leaving region \(A\) through the boundary \(\{(v, V)\in \R_{\geq 0}^2 \mid  v=V<v^{*} \}\).  

\item If \(V(h_0) =0 \wedge v_0 < v^{*}\), then we have
\begin{align*}
V'(h_0)(v^{*} - v_0)>0.
\end{align*}
\end{enumerate}
Thus, the ODE system \eqref{dyn_2_vehicle} drives the pair \((v, V) \in \{(v, V) \mid V=0 \wedge v < v^{*}\}\) back into region \(A\). 
Combining this with Step~0, we conclude that region \(A \cup \{(v^{*}, v^{*})\}\) is invariant.

\item[Step 2: Starting from region \(B\)]

 We consider the case in which \((v_0, V(h_0)) \in B\) and the pair \((v, V)\) hits part of the boundary of region \(B\), \(\{(v,V) \mid V=v<v^{*}\} \cup \{(v,V) \mid V=v^{*}>v\}\) at some finite time. From Step~1, we know that if the pair \((v,V)\) hits the set \(\{(v,V) \mid V=v<v^{*}\}\), then it enters region \(A\). 
In addition, starting from the set \(\{(v,V) \mid V=v^{*}>v\}\), the pair \((v,V)\) enters region \(C\) since 
\[\dot{V}(h)=V'(h)(v^{*}-v)>0\] 
along the set \(\{(v,V) \mid V=v^{*}>v\}\).

\item[Step 3: Starting from region \(C\)]

 We will show that if \(\left(v_0,V(h_0)\right)\) is in region \(C \coloneqq \{(v, V) \in \R_{\geq 0}^2\mid V>v^* > v\}\), then there exists some \(t_1 \in (0,+\infty)\) such that \(\left(v(t_1),V(h(t_1))\right) \in D \coloneqq \{(v, V) \in \R_{\geq 0}^2 \mid V >v > v^*\}\). 
Assume that the pair \((v_0, V(h_0)) \in C\). First, we claim that \(V(t) > \max\{v^{*}, v(t)\}, \forall t \in [0, \infty)\). In fact, along the part of boundary of region \(C\), \(\{(v,V) \in \R_{\geq 0}^2 \mid V=v^{*}>v\}\), 
\[\dot{V}(h) = V'(h)(v^{*}-v)>0,\]
which implies that \(V(t) > v^{*}\)  \(\forall t \in [0, \infty)\). In other words, the pair \((v, V)\) can never enter region \(B\). Therefore, it either stays in region \(C\) or enters region \(D\). We will prove in Step~\(4\) that region \(D \cup \{(v^{*}, v^{*})\}\) is invariant. In either case, the pair \((v,V)\) starting from region \(C\) satisfies \(V(t) > v(t) \ \forall t \in [0, \infty)\).

Now we argue that \(\left(v(t_1),V(h(t_1))\right) \in D \coloneqq \{(v, V) \in \R_{\geq 0}^2 \mid V >v > v^*\}\) at some \(t_1 \in (0, +\infty)\) by contradiction. Assume that \(v(t) < v^*\) for all \(t \geq 0\). Then for all \(t \in [0, \infty)\), 
 \[\tfrac{\dd}{\dd t} (v^* - v(t)) = -\alpha \left(V(h(t))-v(t)\right) - \beta\tfrac{v^* - v(t)}{h^2(t)} \leq -\alpha\left(V(h(t))-v(t)\right) \leq -\alpha(v^* - v(t)),\]
 where the last inequality follows from \(V(t)>v^{*}, \forall t \in [0, \infty)\). From the assumption that \(v(t) < v^*\) for all \(t \geq 0\) and Gr\"{o}nwall's inequality, we have  
\begin{equation}
0 < v^* - v(t) \leq \left(v^* - v_0\right)\exp(-\alpha t)  \ \forall t \geq 0.
\label{inq_v_v}
\end{equation}
Note also that \((\ref{inq_v_v})\) implies that there exists some constant \(C>0\) such that 
\[\int_{0} ^\infty \left(v^* - v(t)\right)\dd t = \int_{0} ^\infty h'(t)\dd t =C. \]
From the assumption that \(v(t) < v^*\) for all \(t \geq 0\), we obtain
\[C=\int_{0} ^\infty h'(s)\dd s > \int_{0} ^t h'(s)\dd s = h(t) - h_0 \geq 0,  \forall t \geq 0,\]
which implies  \(h_0 \leq h(t) \leq h_0 + C\) for all \(t \geq 0\) and
\begin{align*}
\dot{v}(t) = \alpha\left(V(h(t))-v(t)\right)+\beta\tfrac{v^*-v(t)}{h^{2}(t)} > \alpha\left(V(h(t))-v(t)\right) > \alpha\left(V(h(t))-v^{*}\right),  \forall t \geq 0,
\end{align*}
which leads to
\[v(t) - v_0 = \int_{0}^{t} \dot{v}(s)\dd s \geq \alpha\int_{0}^{t} \left(V(h(s)) - v^*\right) \dd s 
 \geq \alpha\left(V(h_0) - v^*\right)(t - 0),  \forall t \geq 0.\]
Consequently, taking \[t_1 = \tfrac{v^* - v_0}{\alpha\left(V(h_0) - v^*\right)}\] leads us to \(v(t_1) \geq v^*\), contradicting the assumption that \(v(t) < v^*\) for all \(t \geq 0\).
 
\item[Step 4: Starting from region \(D\)]

We claim that region \((D \coloneqq \{(v,V) \in \R_{\geq 0}^2 \mid V>v>v^{*}\}) \cup \{(v^{*}, v^{*})\}\) in \cref{fig:decomposition_of_regions}, is (forward) invariant with respect to the dynamics of the system \eqref{dyn_2_vehicle}. 
We start by investigating the behavior of the pair \((v,V)\) along the boundary \(\partial D\) of region \(D\), where 
\begin{align*}
    \partial D = \big\{(v, V)\in \R_{\geq 0}^2 \mid v=v^{*}<V \text{ or } v=V>v^{*} \text{ or } v=V=v^{*}\big\}.
\end{align*}
We consider three cases:
\begin{enumerate}
\item As proved in Step~1, if the pair \((v, V)\) starts at \((v^{*}, v^{*})\), then it stays there because of \cref{eqn: equi_star}. 
\item If \(v_0 = v^{*} < V(h_0)\), then the acceleration of the leader at the initial time is positive. Indeed, 
\begin{align*}
\dot{v}(0) = \alpha\left(V(h_0)-v_0\right)+\beta\tfrac{v^*-v_0}{h_0^2} = \alpha\left(V(h_0)-v_0\right) >0.
\end{align*}
This implies that following the dynamics of system \eqref{dyn_2_vehicle}, the pair \((v, V)\) cannot pass from region \(D\) to region \(C\). 
\item If \(v_0=V(h_0)>v^{*}\), then by assumption \eqref{assump:beta} and the fact that \(\dot{h} = v^{*}-v\),
\begin{align*}
\dot{v}(0)& =  \alpha\left(V(h_0)-v_0\right)+\beta\tfrac{v^*-v_0}{h_0^2} = \beta\tfrac{v^*-v_0}{h_0^2}\\
 &< V'(h_0) (v^{*} - v_0) = \dot{V}(h_0) <0.
\end{align*}
\end{enumerate}
Thus, \(v\) is decreasing faster than \(V\) at the initial time if  \(v_0=V(h_0)>v^{*}\). Therefore, the pair \((v,V) \in \{(v, V) \in \R_{\geq 0}^2 \mid v=V > v^{*}\}\) can only enter region \(D\). 
Hence, region \(D \cup \{(v^{*}, v^{*})\}\) is invariant.

\item[Step 5: Starting from region \(E\)] 

Similar to the argument in Step~2, we consider the case in which \((v_0, V(h_0)) \in E\coloneqq \{(v, V)\in \R_{\geq 0}^2 \mid v > V >v^*\}\) and \((v,V)\) hits the part of the boundary of region \(E\), \(\{(v,V)\in \R_{\geq 0}^2  \mid v=V>v^{*}\} \cup \{(v,V) \in \R_{\geq 0}^2 \mid V=v^{*}<v\}\), at some finite time. 
In Step~4, we have shown that if \((v,V)\) hits the set \(\{(v,V)\in \R_{\geq 0}^2  \mid v=V>v^{*}\), then it enters region \(D\). Furthermore, if \((v,V) \) hits the set \(\{(v,V)\in \R_{\geq 0}^2 \mid V=v^{*}<v\}\), then it enters region \(F\). This is because along the set \(\{(v,V)\in \R_{\geq 0}^2  \mid V=v^{*}<v\)\}, 
\[\dot{V}(h) = V'(h)(v^{*}-v)<0.\]

\item[Step 6: Starting from region \(F\)]

We argue that if \(\left(v_0,V(h_0)\right) \in F \coloneqq \{(v, V) \in \R_{\geq 0}^2\mid v>v^* > V\}\), then there exists some \(t_1 \in (0,+\infty)\) such that \(\left(v(t_1),V(h(t_1))\right) \in A \coloneqq \{(v, V) \in \R_{\geq 0}^2 \mid   v^*>v>V\}\). This 
argument is similar to the proof in Step~3. 
Assume that \(\left(v_0,V(h_0)\right) \in F\). Notice that \(V(h(t)) < \min\{v^{*}, v(t)\} \ \forall t \in [0, \infty)\). Indeed, starting from a part of the boundary of region \(F\), \(\{(v,V) \in \R_{\geq 0}^2 \mid v>V=v^{*}\}\), the pair \((v,V)\) can never enter region \(E\). This is because if \((v(\tilde{t}), V(\tilde{t})) \in \{(v,V) \in \R_{\geq 0}^2 \mid v>V=v^{*}\} \text{ for some } \tilde{t} \in \R_{\geq 0}\), then 
\[\dot{V}(h(\tilde{t})) = V'(h(\tilde{t}))(v^{*}-v(\tilde{t}))<0.\]
Recall from Step~1 that region \(A \cup \{(v^{*}, v^{*})\}\) is invariant. Therefore, any pair \((v,V)\) starting from region \(F\) can never cross either of the lines \(\{(v,V)\in \R_{\geq 0}^2 \mid V=v^{*}\}\) and \(\{(v,V)\in \R_{\geq 0}^2 \mid v=V\}\), which implies \(V(h(t)) < v^{*} \text{ and } V(h(t))<v(t) \ \forall t \in [0, \infty)\). 

Now we need to show that there exists some \(t_1 \in (0, +\infty)\) such that \(v^{*}>v(t_1)\). We will again argue this by contradiction and assume that \(v(t)>v^{*} \text{ for } \forall t \in [0, \infty)\). Then 
\begin{align*}
\tfrac{\dd}{\dd t} (v(t) - v^{*}) = & \alpha \left(V(h(t))-v(t)\right) + \beta\tfrac{v^* - v(t)}{h^2(t)} \\
\intertext{because of the assumption that \(v(t)>v^{*} \ \forall t \in [0, \infty)\) and \(\beta >0\), and}
\tfrac{\dd}{\dd t} (v(t) - v^{*}) \leq & \alpha(V(h(t))-v(t)),\\
\intertext{which follows from the fact that \(V(h(t)) < v^{*} \ \forall t \geq 0\). We then have}
\tfrac{\dd}{\dd t} (v(t) - v^{*}) \leq & \alpha(v^{*}-v(t))= -\alpha (v(t) - v^{*}).
\end{align*}
Using Gr\"{o}nwall's inequality, we obtain 
\begin{equation}
0 < v(t)-v^* \leq (v_0 - v^{*})\exp{(-\alpha t )}  \ \forall t \in\R_{\geq0}.
\end{equation}
Thus, there exists some constant \(C>0\) such that 
\[\int_{0} ^\infty \left(v(t) - v^{*}\right)\dd t = -\int_{0} ^\infty h'(t)\dd t =C. \]
Note that \(h'(t) = v^{*}-v(t) <0\) because of the assumption that \(v(t) > v^* \ \forall t \in \R_{\geq0}\). Thus  
\[-C=\int_{0} ^\infty h'(s)\dd s < \int_{0} ^t h'(s)\dd s = h(t) - h_0 \leq 0,\]
which implies  \(h_0 -C\leq h(t) \leq h_0\) for all \(t \geq 0\) and
\begin{align*}
\dot{v}(t) = \alpha\left(V(h(t))-v(t)\right)+\beta\tfrac{v^*-v(t)}{h^{2}(t)} <\alpha\left(V(h(t))-v(t)\right) < \alpha\left(V(h(t))-v^{*}\right),  \forall t\in\R_{\geq0}.
\end{align*}
Therefore, 
\[v(t) - v_0 = \int_{0}^{t} \dot{v}(s)\dd s \leq  \alpha\int_{0}^{t} \left(V(h(s)) - v^*\right) \dd s 
 \leq \alpha\left(V(h_0) - v^*\right)(t - 0),\quad \forall t \geq 0.\]
 Letting \[t_1 = \tfrac{v^* - v_0}{\alpha\left(V(h_0) - v^*\right)},\] we have \(v(t_1) \leq v^*\), which contradicts the assumption that \(v(t) > v^*\) for all \(t \geq 0\).
\end{description}
\end{proof}

Now we are ready to introduce the following energy function associated with \cref{dyn_2_vehicle}: 
\begin{equation}
\mathcal{E}(t) \coloneqq \tfrac{1}{2}\big(V(h(t)) - v^*\big)^2,\   t\in\R_{\geq0}.
\label{eqn: Energy_1}
\end{equation}
This plays a central role in characterizing the long-term behavior of the follower in the two-vehicle setting. In particular, \(\mathcal{E}(t)\to 0\) as \(t\to\infty\); see \cref{fig:E_F}.
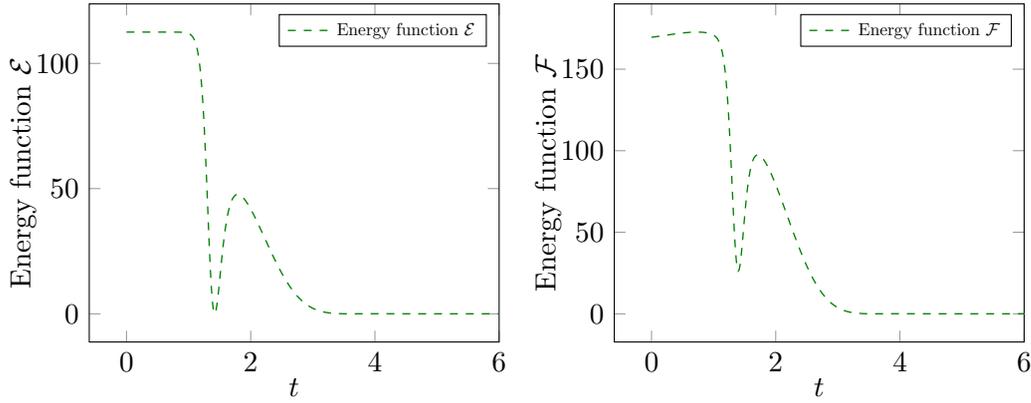
\begin{figure}
\centering
    \begin{tikzpicture}
         \begin{axis}[
             width=0.45\textwidth,
             xlabel={ $t$ },
             ylabel={Energy function \(\mathcal{E}\)}, 
             ylabel style={yshift=-10pt},
             xlabel style={yshift=5pt},
                 legend pos={north east, xmax=6,
                 legend style={nodes={scale=0.6, transform shape}}}
               ]
         \addplot[color=darkgreen,line width=0.5pt, mark=none, dashed]  table [y index=1, x index=0, col sep=comma] {Data/Energy_E_star.csv};
           \addlegendentry{Energy function \(\mathcal{E}\)};
         \end{axis}
     \end{tikzpicture}
     \begin{tikzpicture}
         \begin{axis}[
             width=0.45\textwidth,
             xlabel={ $t$ },
             ylabel={Energy function \(\mathcal{F}\)}, 
            ylabel style={yshift=-10pt},
            xlabel style={yshift=5pt},
                 legend pos={north east, xmax=6,
                 legend style={nodes={scale=0.6, transform shape}}}
               ]
         \addplot[color=darkgreen,line width=0.5pt, mark=none, dashed]  table [y index=1, x index=0, col sep=comma] {Data/Energy_F_star.csv};
           \addlegendentry{Energy function \(\mathcal{F}\)};
         \end{axis}
     \end{tikzpicture}
 \caption{\textbf{Left}: Evolution of the energy $\mathcal E$. \textbf{Right}: Evolution of the energy \(\mathcal{F}\).}
\label{fig:E_F}
\end{figure}

\begin{theorem}[energy decay and convergence to equilibrium]
\label{thm_E_equilibrium}
Let \(\mathcal{E}\) be the energy function defined in \cref{eqn: Energy_1}. Suppose that the dynamics of the leader and follower are governed by \cref{eqn: leading,dyn_2_vehicle}, respectively, under the initial condition \((x(0), x_{\ell}(0), v(0)) = (x_0, x_{\ell, 0}, v_0) \in \R^2\times \R_{\geq 0}\) with \(h_0 \coloneqq x_{\ell,0}-x_0-l>0\). Assume that \cref{assump:beta} is fulfilled. Then, the follower's velocity and space headway approach an equilibrium as \(t \to \infty\):
\begin{align*}
\lim\limits_{t \to \infty} v(t) = v^*\text{ and } \lim\limits_{t \to \infty}h(t) = h^{*},
\end{align*}
where \(v^{*}\) and \(h^{*}\) are as defined in \cref{def: equilibrium}. In addition, 
\[
\lim_{t\to\infty}\mathcal{E}(t)=0.
\]
\end{theorem}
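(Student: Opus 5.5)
We plan to reduce everything to the case analysis already carried out in \cref{lem:BE,lem:AD}, and then use the energy $\mathcal{E}$ as a Lyapunov function on the two invariant regions $A\cup\{(v^*,v^*)\}$ and $D\cup\{(v^*,v^*)\}$. By \cref{coro:cor1,theorem: h_max}, $h(t)\in[h_{\min},h_{\max}]$ for all $t\ge0$, so \cref{assump:beta} applies along the whole trajectory. The pair $(v_0,V(h_0))$ lies in one of the six subregions or on their boundaries. If it starts in $B$ or $E$ and stays there forever, \cref{lem:BE} already yields $v\to v^*$, $h\to h^*$, hence $\mathcal{E}\to0$. Otherwise, after some finite time $\hat t$ we are in the setting of \cref{lem:AD}, which produces $\tilde t$ such that $(v(t),V(h(t)))\in A\cup D\cup\{(v^*,v^*)\}$ for all $t\ge\tilde t$. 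If the pair ever reaches $(v^*,v^*)$ it stays there by Step~0, and we are done. So we may restrict to the pair remaining in $A$ for all $t\ge\tilde t$; the case of $D$ is symmetric.

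In $A$ we have $V(h)<v<v^*$, hence $\dot h=v^*-v>0$ and
\[
\dot{\mathcal{E}}(t)=\big(V(h)-v^*\big)V'(h)\,(v^*-v)<0 .
\]
Thus $\mathcal{E}$ is strictly decreasing and has a limit $\mathcal{E}_\infty\ge0$. Since $h$ is increasing and bounded by $h_{\max}$, it converges to some $h_\infty\le h^*$; here $h\le h^*$ because $V(h)<v^*$. We then use the monotonicity of $h$ and the ODE for $v$ to identify $h_\infty=h^*$. From $\int_{\tilde t}^\infty (v^*-v)\,\dd t=h_\infty-h(\tilde t)<\infty$ and the uniform bound on $\dot v$ from \cref{rem:uniform_bounds}, the integrand $v^*-v$ is uniformly continuous, so Barbalat's lemma gives $v\to v^*$.

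The main obstacle is concluding that $V(h_\infty)=v^*$. Suppose instead $V(h_\infty)<v^*$. Since $v\to v^*$ and $V(h)\to V(h_\infty)<v^*$, eventually $v>V(h)$ with a gap bounded below. But the region $A$ is defined by $v>V$, so this is consistent, and we must look at $\dot v$ more carefully. For large $t$,
\[
\dot v=\alpha\big(V(h)-v\big)+\beta\,\tfrac{v^*-v}{h^2}\le \alpha\big(V(h_\infty)-v^*\big)/2+o(1)<0 ,
\]
while $v<v^*$ and $v\to v^*$ forces $v$ to be increasing on average, which is a contradiction. Hence $V(h_\infty)=v^*$, i.e.\ $h_\infty=h^*$ and $\mathcal{E}_\infty=0$. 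The same argument applies in $D$ with all inequalities reversed: there $h$ is decreasing and bounded below by $h_{\min}$. The points I would check most carefully are this final contradiction step, where a $\limsup$ argument on $\dot v$ suffices, and the bookkeeping ensuring that every starting configuration, including boundary points, is covered by \cref{lem:BE,lem:AD}.
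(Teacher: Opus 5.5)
Your proposal is correct and follows the paper's proof: the same reduction through \cref{lem:BE,lem:AD} to the invariant sets $A\cup\{(v^*,v^*)\}$ and $D\cup\{(v^*,v^*)\}$, monotone convergence of the headway there, and then the observation that $\dot v\to\alpha\bigl(V(h_\infty)-v^*\bigr)$ must vanish. The one difference is that you get $v\to v^*$ by applying Barbalat's lemma to $\dot h=v^*-v$, using that $h$ is monotone and bounded and that $\dot v$ is bounded. This supplies the justification the paper omits when it asserts that convergence of $\mathcal{E}$ forces $\dot{\mathcal{E}}\to 0$, and it also avoids needing a positive lower bound on $V'$ along the trajectory.
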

\begin{proof}
    We consider the following two cases: 
    \begin{enumerate}
        \item If the initial condition \((x_0,x_{\ell,0},v_0)\in\R^2\times\R_{\ge0}\) is such that the pair \((v,V)\) remains in region \(B\) or \(E\) for all time (i.e., \cref{condition_B} or \cref{condition_E} holds), then, by \cref{lem:BE}, we have
    \[
    \lim_{t\to\infty} V(h(t)) = v^{*},
    \]
    with exponential convergence rate
    \[
    \lambda \coloneqq \min\Bigl\{\alpha+\tfrac{\beta}{h_0^2},\ \tfrac{\beta}{(h^{*})^2}\Bigr\},
    \qquad \text{where } h^{*}=V^{-1}(v^{*}).
    \]
    Consequently, \(\mathcal{E}(t)\to 0\) exponentially as \(t\to\infty\).
       \item If the initial condition \((x_0,x_{\ell,0},v_0)\in\R^2\times\R_{\ge0}\) is such that the trajectory \((v(t),V(h(t)))\) reaches one of the regions \(A\), \(C\), \(D\), or \(F\) or their boundaries in finite time (i.e., \cref{eqn_v_V_ACDF} holds), then, by \cref{lem:AD}, there exists a \(\tilde{t}>0\) such that
\[
(v(t),V(h(t)))\in A\cup D\cup\{(v^{*},v^{*})\},
\quad \forall\, t\ge \tilde t.
\]
Notice that for \(\forall t \in \R_{\geq 0}\), 
\begin{align*}
\dot{\mathcal{E}}(t)&=  \big(V(h(t)) - v^{*}\big)V'(h(t))\dot{h}(t)  \\
&=\big(V(h(t))-v^*\big)V'(h(t))(v^{*}-v(t)) \\
&=  \big(V(h(t))-v(t)+v(t) -v^*\big)V'(h(t))(v^{*}-v(t)) \\
&= V'(h(t))\big(v(t)-V(h(t))\big)(v(t)-v^*)-V'(h(t))(v(t)- v^*)^2. 
\end{align*}
Recall that \(V'>0\) and 
\begin{align*}
A = & \{(v, V) \in \R_{\geq 0}^2 \mid v^* > v > V\} ~~\text{and}~~ D =\{(v, V)\in \R_{\geq 0}^2 \mid V > v > v^*\}. 
\end{align*}
Thus,
\begin{align*}
\lim \limits_{t \to \infty} \dot{\mathcal{E}}(t) & =  \lim \limits_{t \to \infty} \big(V'(h(t))\left(v(t)-V(h(t))\right)\left(v(t)-v^*\right)-V'(h(t))(v(t)- v^*)^2 \big)\\
& \leq \lim \limits_{t \to \infty} -V'(h(t))(v(t)- v^*)^2 \leq 0, \quad  \forall t \in \R_{\geq 0}.
\end{align*}
As a result, the limit \(\mathcal E_\infty \coloneqq \lim\limits_{t \to \infty} \mathcal{E}(t) = \lim\limits_{t \to \infty} \tfrac{1}{2} \big(V(h(t)-v^{*})\big)^2\geq 0\) exists because of the monotonicity of  \(\mathcal{E}\) and its boundedness from below by zero. This implies that \(\lim\limits_{t \to \infty} \dot{\mathcal{E}}(t) =0\) and that \(\lim\limits_{t \to \infty} | V(h(t)) - v^{*}| \) exists. Therefore, 
\[\lim\limits_{t \to \infty} v(t) = v^{*}. \]
Furthermore, 
\begin{equation}
\lim\limits_{t \to \infty} | V(h(t)) - v^{*}| = \begin{cases}
\lim\limits_{t \to \infty} V(h(t)) - v^{*}  & \text{ if } \lim \limits_{t \to \infty }(v(t), V(h(t))) \in D \cup \{(v^{*}, v^{*})\} \\
\lim \limits_{t \to \infty} -V(h(t)) + v^{*}   &  \text{ if } \lim \limits_{t \to \infty } (v(t), V(h(t))) \in A \cup \{(v^{*}, v^{*})\} .
\end{cases}
\label{eqn: energy}
\end{equation}
Because of the monotonicity of the optimal velocity function \(V\) as defined in \cref{eq:assumption_V}, we also deduce the existence of \(h_\infty \coloneqq \lim\limits_{t \to \infty} h(t)\).  Note that 
\begin{align*}
    \lim \limits_{t \to \infty } \dot{v}(t)= \lim \limits_{t \to \infty} \Big(\alpha\big(V(h(t))-v(t)\big)+\beta\tfrac{v^*-v(t)}{h^2(t)}\Big)
\end{align*}
exists because \(\lim\limits_{t \to \infty} v(t) = v^{*}\) and \(\lim\limits_{t \to \infty} h(t) = h_{\infty}>0\). Furthermore, 
\begin{align}
  0=\lim\limits_{t \to \infty} \dot{v}(t) & = \lim\limits_{t \to \infty} 
 \Big(\alpha\big(V(h(t))-v(t)\big)+\beta\tfrac{v^*-v(t)}{h^2(t)}\Big) \label{eqn_0_acc}\\
 &=\lim\limits_{t \to \infty} 
 \alpha\big(V(h(t))-v(t)\big). 
 \end{align}
Consequently, \[\lim\limits_{t \to \infty} V(h(t)) = V(h_\infty) =  v^*,\] and hence \(\mathcal{E}(t) \to 0\) as \(t \to \infty\). In addition, 
from the monotonicity of the optimal velocity function \(V\), we have $h_\infty = V^{-1}(v^{*})=h^*$, and thus \(\lim \limits_{t \to \infty}h(t) = h^{*}\).  
\end{enumerate}
\end{proof}
\subsection{Alternative approach to the long-term convergence}

It is also possible to establish a long-term convergence guarantee for the solution of the Bando-FtL system \cref{dyn_2_vehicle} by employing a more general energy function:
\begin{equation}\label{eq:F}
  \mathcal{F}(t) \coloneqq \tfrac{1}{2}\left(V(h(t)) - v^*\right)^2 + \tfrac{1}{2}\left(v(t) - v^*\right)^2 + \tfrac{1}{2}\left(V(h(t)) - v(t)\right)^2 ,\quad \forall t \geq 0.
\end{equation}
As shown in \cref{fig:E_F}, \(\lim \mathcal{F}(t) \to 0\) as \(t \to \infty\).
In addition to the assumption \cref{assump:beta} about the size of the ``follow-the-leader'' effect (measured by $\beta$), we also impose the following assumption on the coefficient $\alpha$ in \cref{dyn_2_vehicle}.
\begin{assumption}
\label{assump_2}
The coefficient $\alpha\in\R_{>0}$ in the Bando-FtL acceleration \cref{defi:ACC} satisfies
\begin{equation}\label{assump:alpha}
\max\left\{V'(h),\tfrac{\beta}{h^2}\right\} < \alpha < \tfrac{V'(h)}{2} + \tfrac{\beta}{h^2} \quad \text{for all $h_{\min} \leq h\leq h_{\max}$},
\end{equation}
where \(h_{\min} \text{ and } h_{\max}\) are as in \cref{eqn_h_min,eqn_h_max}, respectively. 
\end{assumption}

\begin{remark}
The conditions \cref{assump:beta} in \cref{assump_1,assump:alpha} in \cref{assump_2} together imply that
\begin{equation}\label{assump:combined}
\tfrac{\beta}{h^2} < \alpha < \tfrac{V'(h)}{2} + \tfrac{\beta}{h^2}
\quad \text{for all } h_{\min} \le h \le h_{\max}.
\end{equation}
Although the upper and lower bounds in \eqref{assump:combined} impose a restrictive constraint on \(\alpha\), the admissible range is not empty. Indeed, for each \(h\in[h_{\min},h_{\max}]\) the interval
\[
\Bigl(\tfrac{\beta}{h^2},\ \tfrac{V'(h)}{2}+\tfrac{\beta}{h^2}\Bigr)
\]
has positive length \(\tfrac{V'(h)}{2}\), and thus a feasible \(\alpha\) exists whenever \(V'(h)>0\) on \([h_{\min},h_{\max}]\).
\end{remark}

Considering the assumption \cref{assump:alpha} about the size of the coefficient $\alpha$ together with \cref{assump:beta} about the coefficient $\beta$, we can promote the qualitative convergence result reported in \cref{thm_E_equilibrium} to a quantitative exponential convergence guarantee.
\begin{theorem}
[quantitative long-term behavior of the follower]
\label{thm: main2}
Assume that the dynamics of the leader and follower are described by \cref{eqn: leading,dyn_2_vehicle} and that the optimal velocity function \(V\) satisfies \cref{eq:assumption_V}. 
Let \cref{assum_initial_following,assum_leading_const_v,assump_1,assump_2} hold. 
Then \(\mathcal{F}(t)\to 0\) exponentially as \(t\to\infty\). In addition, the follower's velocity and space headway converge exponentially to their equilibrium values \(v^{*}\) and \(h^{*}\) as \(t\to\infty\). 
More precisely, for all \(t\in\R_{\ge0}\),
\begin{equation}\label{eq:v_decay}
|v(t) - v^*|
\leq 
\bigl(|V(h_0) - v^*| + |v_0 - v^*| + |V(h_0) - v_0|\bigr)
\exp\!\left(-\tfrac 12\left(\alpha - \tfrac{\beta}{h^2_{\min}}\right)t\right)
\end{equation}
and
\begin{equation}\label{eq:h_decay}
|h(t) - h^*|
\leq 
2\tfrac{|V(h_0) - v^*| + |v_0 - v^*| + |V(h_0) - v_0|}{\alpha - \beta/h^2_{\min}}
\exp\!\left(-\tfrac 12\left(\alpha - \tfrac{\beta}{h^2_{\min}}\right)t\right).
\end{equation}
\end{theorem}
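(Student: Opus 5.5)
The plan is a direct Lyapunov computation for $\mathcal F$ from \eqref{eq:F} in the shifted variables
\[
a(t)\coloneqq V(h(t))-v^*,\qquad b(t)\coloneqq v(t)-v^*,
\]
so that $V(h)-v=a-b$. In these variables
\[
\mathcal F=\tfrac12\bigl(a^2+b^2+(a-b)^2\bigr)=a^2+b^2-ab,
\]
which is a positive definite quadratic form with $\tfrac12(a^2+b^2)\le \mathcal F\le \tfrac32(a^2+b^2)$.

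First I will place the solution in the region where the assumptions apply. By \cref{coro:cor1} and \cref{theorem: h_max}, applied with the constant leader velocity $v^*$, we have $h(t)\in[h_{\min},h_{\max}]$ for all $t\ge0$. Hence \cref{assump:beta} and \cref{assump:alpha} hold pointwise along the trajectory, with $h=h(t)$.

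Next I differentiate using \eqref{eqn_new_2}. The evolution laws are $\dot h=-b$, $\dot a=-V'(h)\,b$ and $\dot b=\alpha(a-b)-\tfrac{\beta}{h^2}b$. Writing $\kappa\coloneqq \beta/h^2$, this gives
\[
\dot{\mathcal F}=(2a-b)\dot a+(2b-a)\dot b
=-\alpha a^2+\bigl(3\alpha-2V'(h)+\kappa\bigr)ab+\bigl(V'(h)-2\alpha-2\kappa\bigr)b^2 .
\]
The key step is the pointwise inequality $\dot{\mathcal F}\le-(\alpha-\kappa)\mathcal F$. Since $\alpha-\kappa\ge\alpha-\beta/h_{\min}^2>0$ by \eqref{assump:combined}, this yields
\[
\dot{\mathcal F}\le -\Bigl(\alpha-\tfrac{\beta}{h_{\min}^2}\Bigr)\mathcal F .
\]
To prove the pointwise inequality, I will write $\dot{\mathcal F}+(\alpha-\kappa)(a^2+b^2-ab)$ as a quadratic form in $(a,b)$. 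I will then check that it is negative semidefinite: its $a^2$-coefficient is $-\kappa\le0$, and the discriminant condition should reduce to the two-sided bounds $\kappa<\alpha<\tfrac{V'}2+\kappa$ and $V'<\alpha$ from \cref{assump_2}. This verification is the main obstacle. If the direct determinant check does not close with exactly the rate $\alpha-\kappa$, I will first try the completed square in $a$. Failing that, I will accept any rate $c(\alpha,\beta,h_{\min})>0$ allowed by \eqref{assump:combined} and adjust the constants. Grönwall's inequality then gives $\mathcal F(t)\le \mathcal F(0)\,e^{-(\alpha-\beta/h_{\min}^2)t}$.

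Finally I convert the energy bound into the stated estimates. Since $b^2\le 2\mathcal F$, we get $|b(t)|\le\sqrt{2\mathcal F(0)}\,e^{-\frac12(\alpha-\beta/h_{\min}^2)t}$. Moreover, $\sqrt{2\mathcal F(0)}=\sqrt{a_0^2+b_0^2+(a_0-b_0)^2}\le |a_0|+|b_0|+|a_0-b_0|$, which is exactly the prefactor in \eqref{eq:v_decay}. For the headway, $\dot h=-b$ is integrable on $[0,\infty)$, and $h(t)\to h^*$ by \cref{thm_E_equilibrium} (or from $a\to0$ and the invertibility of $V$). Hence
\[
h(t)-h^*=\int_t^\infty b(s)\,\dd s ,
\]
and integrating the exponential bound on $|b|$ produces the factor $2/(\alpha-\beta/h_{\min}^2)$ in \eqref{eq:h_decay}.
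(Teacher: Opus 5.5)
Your proposal is correct. The step you flagged as the main obstacle does close with exactly the rate $\alpha-\kappa$, so the fallback is not needed. The form
$\dot{\mathcal F}+(\alpha-\kappa)\mathcal F=-\kappa a^2+2(\alpha-V'+\kappa)ab+(V'-\alpha-3\kappa)b^2$
has $a^2$-coefficient $-\kappa<0$ and determinant
\[
\kappa\bigl(\alpha-V'+3\kappa\bigr)-\bigl(\alpha-V'+\kappa\bigr)^2=\bigl(2\kappa+\alpha-V'\bigr)\bigl(\kappa-\alpha+V'\bigr)>0,
\]
because $V'<\alpha<\tfrac{V'}{2}+\kappa<V'+\kappa$. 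So it is negative semidefinite. Note that the exact condition is $V'-2\kappa\le\alpha\le V'+\kappa$; it does not reduce literally to the bounds in \cref{assump_2}, as you expected, but it is implied by them.

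The paper proves the same differential inequality by a different route. It keeps $\mathcal F$ as three squares in $V(h)-v^*$, $v-v^*$ and $V(h)-v$. It then isolates the two cross terms $V'\,(V(h)-v^*)(V(h)-v)$ and $(\alpha-V'+\kappa)(V(h)-v)(v-v^*)$ and applies Young's inequality to each. Finally it bounds $\dot{\mathcal F}$ by the largest of the three resulting diagonal coefficients times $\mathcal F$. The ordering $-\alpha+\kappa>\alpha-V'-\kappa>-V'$ of these coefficients is exactly where $\kappa<\alpha<\tfrac{V'}{2}+\kappa$ is used.

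Your exact semidefiniteness check is sharp for this Lyapunov function and this rate. With \cref{assump_1} (which makes $\alpha\ge V'-2\kappa$ automatic), it shows that $\kappa<\alpha\le V'+\kappa$ on $[h_{\min},h_{\max}]$ already suffices. This relaxes the upper bound of \cref{assump_2} from $\tfrac{V'}{2}+\kappa$ to $V'+\kappa$. The paper's Young-inequality bookkeeping loses that factor but reads the rate off directly. The rest of your plan matches the paper: confining $h$ to $[h_{\min},h_{\max}]$, using $b^2\le 2\mathcal F$ and $\sqrt{2\mathcal F(0)}\le|a_0|+|b_0|+|a_0-b_0|$, and integrating $|b|$ over $[t,\infty)$.
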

\begin{proof}
The time evolution of $\mathcal{F}$ along the solution of the system \cref{dyn_2_vehicle} can be evaluated as follows:
\begin{align*}
\tfrac{\dd}{\dd t} \mathcal{F}(t) &=  \left(V(h(t))-v^*\right) V'(h(t)) (v^{*}-v(t)) + (v(t)-v^*) \left(\alpha\left(V(h(t))-v(t)\right) + \beta \tfrac{v^* - v(t)}{h^2(t)}\right) \\
&\qquad + \left(V(h(t))-v(t)\right) \left(V'(h(t)) (v^* - v(t)) - \alpha \left(V(h(t))-v(t)\right) - \beta\tfrac{v^* - v(t)}{h^2(t)}\right)\\
&= \left(V(h(t))-v^*\right) V'(h(t)) \left(v^*-V(h(t))+V(h(t))-v(t)\right) \\
&\qquad -\tfrac{\beta}{h^2(t)} (v^*-v(t))^2 + \alpha \left(V(h(t))-v(t)\right) (v(t)-v^*) \\
&\qquad + \left(V(h(t))-v(t)\right) \left(V'(h(t)) (v^* - v(t)) - \alpha \left(V(h(t))-v(t)\right) - \beta \tfrac{v^* - v(t)}{h^2(t)}\right). \\
\intertext{After rearranging terms, it follows that }
\tfrac{\dd}{\dd t} \mathcal{F}(t) &= -V'(h(t)) \left(V(h(t))-v^*\right)^2 - \tfrac{\beta}{h^2(t)} (v^*-v(t))^2 - \alpha \left(V(h(t))-v(t)\right)^2 \\
&\qquad + V'(h(t)) \left(V(h(t))-v^*\right) \left(V(h(t))-v(t)\right) \\
&\qquad + \left(\alpha - V'(h(t)) + \tfrac{\beta}{h^2(t)}\right) \left(V(h(t))-v(t)\right) (v(t)-v^*). \\
\intertext{Using \(a \leq |a|\) and \(ab \leq \tfrac{a^2 + b^2}{2}, \forall a, b \in \R\), we have}
\tfrac{\dd}{\dd t} \mathcal{F}(t) &\leq -V'(h(t)) \left(V(h(t))-v^*\right)^2 - \tfrac{\beta}{h^2(t)} (v^*-v(t))^2 - \alpha \left(V(h(t))-v(t)\right)^2 \\
&\qquad + V'(h(t)) \tfrac{\left(V(h(t))-v^*\right)^2}{2} + V'(h(t)) \tfrac{\left(V(h(t))-v(t)\right)^2}{2} \\
&\qquad + \left|\alpha - V'(h(t)) + \tfrac{\beta}{h^2(t)}\right| \tfrac{(V(h(t))-v(t))^2}{2} + \left|\alpha - V'(h(t)) + \tfrac{\beta}{h^2(t)}\right| \tfrac{(v(t)-v^*)^2}{2} \\
&= -V'(h(t)) \tfrac{\left(V(h(t))-v^*\right)^2}{2} + \left(-\tfrac{2\beta}{h^2(t)}+\left|\alpha - V'(h(t)) + \tfrac{\beta}{h^2(t)}\right|\right) \tfrac{(v(t)-v^*)^2}{2} \\
&\qquad + \left(-2 \alpha + V'(h(t)) + \left|\alpha - V'(h(t)) + \tfrac{\beta}{h^2(t)}\right| \right) \tfrac{(V(h(t))-v(t))^2}{2},\\
\intertext{where the last equality was obtained by combining like terms. By assumption \cref{assump:alpha}, \(\beta >0\), and \(\alpha - V'(h(t)) + \tfrac{\beta}{h^2(t)}>0\),}
\tfrac{\dd}{\dd t} \mathcal{F}(t) &\leq \max\left\{-V'(h(t)), -\tfrac{2\beta}{h^2(t)}+\left|\alpha - V'(h(t)) + \tfrac{\beta}{h^2(t)}\right|, \right.\\
& \quad \left. -2 \alpha + V'(h(t)) + \left|\alpha - V'(h(t)) + \tfrac{\beta}{h^2(t)}\right|\right\} \mathcal{F}(t) \\
&= \max\left\{-V'(h(t)), \alpha - V'(h(t)) - \tfrac{\beta}{h^2(t)}, -\alpha + \tfrac{\beta}{h^2(t)}\right\}\mathcal{F}(t). \\
\intertext{Because of \cref{assump:alpha}, \(-\alpha + \tfrac{\beta}{h^2(t)} > \alpha - V'(h(t)) - \tfrac{\beta}{h^{2}(t)} > - V'(h(t))\), and thus }
\tfrac{\dd}{\dd t} \mathcal{F}(t) &\leq -\left(\alpha - \tfrac{\beta}{h^2(t)}\right) \mathcal{F}(t) \leq -\left(\alpha - \tfrac{\beta}{h^2_{\min}}\right) \mathcal{F}(t).
\end{align*}
Consequently, we deduce that $\mathcal{F}(t) \leq \mathcal{F}(0) \exp\left\{-\left(\alpha - \tfrac{\beta}{h^2_{\min}}\right)t\right\}$ for all $t\geq 0$, from which the estimate \eqref{eq:v_decay} follows. Now, as $h(t) \xrightarrow{t\to \infty} h^*$ by \cref{thm_E_equilibrium}, we have for \(t\in\R_{\geq0}\) that $h(t) = h_0 + \int_0^t (v^* - v(s)) \dd s$ and $h^* = h_0 + \int_0^\infty (v^* - v(s)) \dd s$. Therefore, the estimate \eqref{eq:h_decay} can be derived by noticing that 
\begin{align*}
|h(t) - h^*| & \leq \int_t^\infty\!\! |v^* - v(s)|  \dd s \\
& \leq \big(|V(h_0) - v^*| + |v_0 - v^*| + |V(h_0) - v_0|\big) \int_t^\infty\!\!\! \exp\Big(-\tfrac 12\left(\alpha - \tfrac{\beta}{h^2_{\min}}\right)s\Big) \dd s. 
\end{align*}
\end{proof}

\begin{remark}
The quantitative estimates \eqref{eq:v_decay} and \eqref{eq:h_decay} established in \cref{thm: main2} do not represent straightforward refinements of the qualitative convergence results reported in \cref{thm_E_equilibrium}, as they rely on the additional assumption \eqref{assump:alpha} imposed on the coefficient $\alpha$.
\end{remark}
\subsection{Long-term behavior of the Bando-FtL model with \texorpdfstring{$N+1$}{\textit{N} + 1} vehicles}

In this subsection, we study the long-term behavior of the Bando-FtL model in the general case, with $N+1$ vehicles for $N \geq 2$.  The long-term behavior in the five-vehicle case is illustrated in \cref{fig_5_cars_star}. The key ingredient is the propagation of uniform bounds on the space headways $h_i = x_{i-1} - x_i - l$ for all $t\geq 0$ and all $i \in \{2,\ldots,N+1\}$ proved in \cref{cor_well_posed_N_cars}. Our main result is the following long-term convergence guarantee.

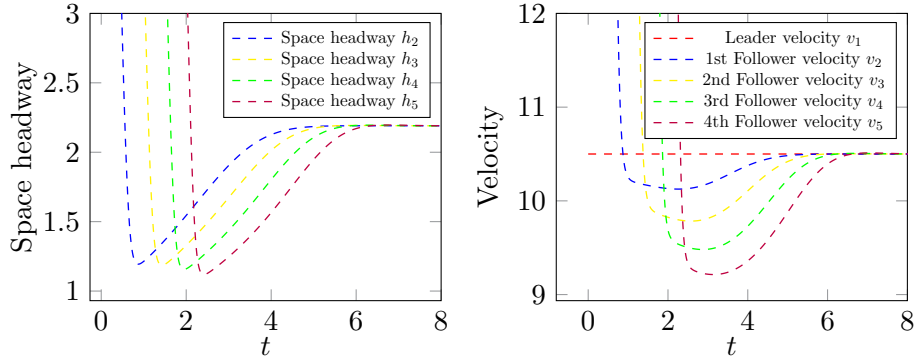
\begin{figure}
\centering
    \begin{tikzpicture}
         \begin{axis}[
             width=0.4\textwidth,
             xlabel={ $t$ },
             ylabel={Space headway}, 
             xlabel style={yshift=5pt},
             ylabel style={yshift=-10pt},
                 legend pos={north east,ymax=3, xmax=8, 
                 legend style={nodes={scale=0.6, transform shape}}}
               ]
         \addplot[color=blue,line width=0.5pt, mark=none, dashed]  table [y index=1, x index=0, col sep=comma] {Data/Follower_2_headway_star.csv};
         \addplot[color=yellow,line width=0.5pt, mark=none, dashed]  table [y index=1, x index=0, col sep=comma] {Data/Follower_3_headway_star.csv};
         \addplot[color=green,line width=0.5pt, mark=none, dashed]  table [y index=1, x index=0, col sep=comma] {Data/Follower_4_headway_star.csv};
         \addplot[color=purple,line width=0.5pt, mark=none, dashed]  table [y index=1, x index=0, col sep=comma] {Data/Follower_5_headway_star.csv};
           \addlegendentry{Space headway \(h_2\)};
           \addlegendentry{Space headway \(h_3\)};
           \addlegendentry{Space headway \(h_4\)};
           \addlegendentry{Space headway \(h_5\)};
         \end{axis}
     \end{tikzpicture}
    \begin{tikzpicture}
        \begin{axis}[
            width=0.4\textwidth,
            xlabel={$t$},
            ylabel={Velocity},
            xlabel style={yshift=5pt},
            ylabel style={yshift=-10pt},
            legend pos={north east,ymax=12, xmax=8,
            legend style={nodes={scale=0.6, transform shape}}}
                 ]
        \addplot[color=red,line width=0.5pt, mark=none, dashed]  table [y index=1, x index=0, col sep=comma] {Data/Leader_1_velocity_star.csv};
        \addplot[color=blue,line width=0.5pt, mark=none, dashed]  table [y index=1, x index=0, col sep=comma] {Data/Follower_2_velocity_star.csv};
        \addplot[color=yellow,line width=0.5pt, mark=none, dashed]  table [y index=1, x index=0, col sep=comma] {Data/Follower_3_velocity_star.csv};
        \addplot[color=green,line width=0.5pt, mark=none, dashed]  table [y index=1, x index=0, col sep=comma] {Data/Follower_4_velocity_star.csv};
        \addplot[color=purple,line width=0.5pt, mark=none, dashed]  table [y index=1, x index=0, col sep=comma] {Data/Follower_5_velocity_star.csv};
    \addlegendentry{Leader velocity \(v_{1}\)};
         \addlegendentry{\(1\)st Follower velocity \(v_2\)};
         \addlegendentry{\(2\)nd Follower velocity \(v_3\)};
         \addlegendentry{\(3\)rd Follower velocity \(v_4\)};
         \addlegendentry{\(4\)th Follower velocity \(v_5\)};
        \end{axis}
    \end{tikzpicture}
 \caption{\textbf{Left}: The four followers' space headways.  \textbf{Right}: The five vehicle's velocities.}
 \label{fig_5_cars_star}
\end{figure}

\begin{theorem}[long-term behavior of the Bando-FtL model for \(N+1\) vehicles]
\label{thm:generalization}
Under the conditions of \cref{cor_well_posed_N_cars}, if assumptions~\eqref{assump:beta} and~\eqref{assump:alpha} hold with 
$h_{N+1,\min} = \min\limits_{ 2 \leq i \leq N+1} h_{i,\min}$ replacing \(h_{\min}\), then the velocities and spatial headways of the followers asymptotically converge to a common equilibrium as \(t \to \infty\). In other words, for each $i \in \{2,\ldots, N+1\}$, it holds that
\begin{align*}
\lim\limits_{t \to \infty} v_i(t) = v^* ~\text{ and }~ \lim\limits_{t \to \infty} h_i(t) = h^{*} \coloneqq V^{-1}(v^*).
\end{align*}
\end{theorem}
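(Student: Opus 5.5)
We argue by induction on the vehicle index $i$, using the cascade structure already exploited in the proof of \cref{cor_well_posed_N_cars}: the pair $(x_i,v_i)$ is driven only by $(x_{i-1},v_{i-1})$ and never feeds back upstream. For $i=2$ the leader drives at the constant velocity $v^*$, so \cref{thm: main2} applies directly. The bounds $h_{\min}\le h_2\le h_{\max}$ hold, and \cref{assump:beta,assump:alpha} hold on this range because $h_{N+1,\min}\le h_{2,\min}$. Hence $v_2\to v^*$ and $h_2\to h^*$ exponentially, with rate $\lambda:=\tfrac12(\alpha-\beta/h_{N+1,\min}^2)>0$.

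For the inductive step, assume $|v_{i-1}(t)-v^*|\le C_{i-1}\mathrm e^{-\lambda_{i-1}t}$ for some $\lambda_{i-1}>0$. Vehicle $i$ is now a follower of a leader whose velocity is a perturbation of $v^*$. We run the energy estimate of \cref{thm: main2} with the same functional
\[
\mathcal F_i(t)=\tfrac12\bigl(V(h_i)-v^*\bigr)^2+\tfrac12\bigl(v_i-v^*\bigr)^2+\tfrac12\bigl(V(h_i)-v_i\bigr)^2 .
\]
Write $\dot h_i=(v^*-v_i)+\varepsilon(t)$ and $\beta(v_{i-1}-v_i)/h_i^2=\beta(v^*-v_i)/h_i^2+\beta\varepsilon(t)/h_i^2$, where $\varepsilon:=v_{i-1}-v^*$. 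The unperturbed part of $\dot{\mathcal F}_i$ is exactly the expression from the proof of \cref{thm: main2}. It is bounded by $-2\lambda\mathcal F_i$, using \cref{assump:alpha} on $[h_{i,\min},h_{i,\max}]$ together with the uniform lower bound $h_i\ge h_{i,\min}\ge h_{N+1,\min}$ from \cref{cor_well_posed_N_cars}. We also need an upper bound $h_{i,\max}$, obtained from \cref{theorem: h_max} applied to vehicle $i-1$ as leader; this requires $v_{i-1}\le\bar v_{\max}<v_{\max}$, which I would check or add as a hypothesis on the parameter range. The perturbation terms are linear in $\varepsilon$, with coefficients bounded by $(L+\beta/h_{N+1,\min}^2)\,C\sqrt{\mathcal F_i}$, since all velocities and $V$ lie in $[0,v_{\max}]$. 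This gives
\[
\dot{\mathcal F}_i\le-2\lambda\mathcal F_i+K|\varepsilon(t)|\sqrt{\mathcal F_i}.
\]
For $y=\sqrt{\mathcal F_i}$ this becomes $\dot y\le-\lambda y+\tfrac K2 C_{i-1}\mathrm e^{-\lambda_{i-1}t}$, and Gr\"onwall yields $y(t)\lesssim \mathrm e^{-\min\{\lambda,\lambda_{i-1}\}t}(1+t)$. Hence $v_i\to v^*$ and $V(h_i)\to v^*$ exponentially, and so $h_i\to h^*$ by invertibility of $V$ and continuity of $V^{-1}$ on the compact range. Because $|v_i-v^*|\le \sqrt{2\mathcal F_i}$, the inductive hypothesis propagates, possibly with a slightly smaller rate, and the induction closes after $N$ steps.

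The main obstacle is making the perturbation step rigorous, namely the bounds on $h_i$ that \cref{assump:alpha} needs. The lower bound comes from \cref{cor_well_posed_N_cars}, but the upper bound $h_{i,\max}$ requires the leader's velocity to stay in $[v_{\min},\bar v_{\max}]$. I would get this from the velocity invariance argument in \cref{rem:uniform_bounds} and interpret $h_{\max}$ in the assumptions as the maximum of these bounds over $i$. If that fails, a fallback is to use only the exponential decay of $\varepsilon$: $\int_0^\infty|\varepsilon|\,\mathrm dt<\infty$ keeps $h_i$ bounded, and the assumption need only be applied on the resulting compact range.
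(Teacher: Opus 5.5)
Your argument is correct. Its skeleton coincides with the paper's: the same cascade induction, the same functional $\mathcal F_i$, and the same splitting of $\dot{\mathcal F}_i$ into the unperturbed expression from \cref{thm: main2} plus a term linear in $\varepsilon=v_{i-1}-v^*$ (the paper's $q_3$).

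The difference is in how you close the estimate. The paper writes $\dot{\mathcal F}_3+p_3\mathcal F_3\le q_3$ and applies Gr\"onwall in integral form followed by l'H\^opital's rule. For this it uses only $\varepsilon(t)\to0$ together with an (implicit) uniform bound on the coefficient of $\varepsilon$, coming from the velocity bounds, $V'\le L$ and $h_3\ge h_{3,\min}$. That yields convergence but no rate. You observe that this coefficient is in fact $O(\sqrt{\mathcal F_i})$. This turns the estimate into a linear inequality for $\sqrt{\mathcal F_i}$ and propagates exponential decay down the chain; for example $\sqrt{\mathcal F_i(t)}\lesssim(1+t)^{i-2}\mathrm{e}^{-\lambda t}$ if you keep every rate equal to $\lambda$. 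This is a real gain. It proves the globally exponential convergence for all $N$ followers that the abstract announces, which the paper's proof of this theorem does not deliver. Nothing is lost either, since $\dot y\le-\lambda y+\tfrac K2|\varepsilon|$ also gives $y\to0$ under the paper's weaker input $\varepsilon\to0$.

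One technicality: $\sqrt{\mathcal F_i}$ need not be differentiable where $\mathcal F_i=0$. You can work with $\sqrt{\mathcal F_i+\delta}$ and let $\delta\to0$. Alternatively, use $K|\varepsilon|\sqrt{\mathcal F_i}\le\lambda\mathcal F_i+\tfrac{K^2}{4\lambda}\varepsilon^2$ to get $\dot{\mathcal F}_i\le-\lambda\mathcal F_i+\tfrac{K^2}{4\lambda}\varepsilon^2$ directly.

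On the upper bound for $h_i$: you are right that \eqref{assump:alpha} is evaluated at $h=h_i(t)$, so every $h_i$ needs an a priori upper bound. The paper's proof uses this for $h_3$ without comment, and the statement only replaces $h_{\min}$.

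Your primary fix works, assuming $v^*<v_{\max}$ and $v_{i,0}<v_{\max}$ for all $i$. Run the barrier argument of \cref{rem:uniform_bounds} from above at the level $\bar v_{i-1}\coloneqq\max\{v_{i-1,0},V(h_{i-1,\max}),\bar v_{i-2}\}$, with $\bar v_1=v^*$. This keeps $v_{i-1}\le\bar v_{i-1}<v_{\max}$, strictly because $V(h)<v_{\max}$ for every finite $h$. Then \cref{theorem: h_max} yields $h_{i,\max}$, and you impose \eqref{assump:alpha} on $[h_{N+1,\min},\max_i h_{i,\max}]$.

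Drop the fallback, however. The bound $\int_0^\infty|\varepsilon|\,\mathrm{d}t<\infty$ does not bound $h_i$ by itself, because $\dot h_i=\varepsilon+(v^*-v_i)$ and integrability of $v^*-v_i$ is exactly what you are trying to prove. Moreover, a bound read off a posteriori from the trajectory cannot be used to verify a hypothesis that must hold on a range fixed in advance.
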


\begin{proof}
As stated in the proof of \cref{cor_well_posed_N_cars}, the causal structure of the coupled dynamical system \cref{system_dyn_new} implies that the evolution of $\left\{(x_k(t),v_k(t))\right\}_{t\geq 0}$ for $1 \leq k \leq i$ does not depend on the evolution of $\left\{(x_m(t),v_m(t))\right\}_{t\geq 0}$ for $i < m \leq N+1$. Therefore, the state $(x_i,v_i)$ of the $i$th vehicle affects the state $(x_{i+1},v_{i+1})$ of the $(i+1)$th vehicle but not vice versa. This allows us to assume without loss of generality that there are $N = 2$ followers since the general scenario with $N \geq 3$ followers can be handled in a similar manner via an induction argument.

Thanks to \cref{thm_E_equilibrium,thm: main2}, we know that
\begin{equation*}
\lim\limits_{t \to \infty} v_2(t) = v^* ~\text{ and }~ \lim\limits_{t \to \infty} h_2(t) = \lim\limits_{t \to \infty} \left(x_1(t) - x_2(t) - l\right) = h^{*}.
\end{equation*}
Therefore, if we define $\sigma_2(t) \coloneqq v_2(t) - v^*$ for all $t\geq 0$, then we can write $v_2(t) = v^* + \sigma(t)$ with $\lim \limits_{t \to \infty} \sigma_2(t) = 0$. To establish the above convergence results for the third vehicle, with velocity and space–headway pair $\left\{(v_3(t),h_3(t))\right\}_{t\geq 0}$, we use the following natural adaptation of the energy function $\mathcal{F}$ from \eqref{eq:F}, which was employed in the proof of \cref{thm: main2} in the case of a single follower:
\begin{equation}\label{eq:F3}
  \mathcal{F}_3(t) \coloneqq \tfrac{1}{2}\left(V(h_3(t)) - v^*\right)^2 + \tfrac{1}{2}\left(v_3(t) - v^*\right)^2 + \tfrac{1}{2}\left(V(h_3(t)) - v_3(t)\right)^2 ,\quad \forall t \geq 0.
\end{equation}
We recall from \cref{cor_well_posed_N_cars} that $\inf_{t \geq 0} h_3(t) \geq h_{3,\min} > 0 $. A straightforward computation analogous to those presented in the proof of \cref{thm: main2} shows that
\begin{equation}\label{eq:evolution_F3}
\tfrac{\dd}{\dd t} \mathcal{F}_3(t) + p_3(t) \mathcal{F}_3(t) \leq q_3(t),
\end{equation}
in which
\begin{align*}
-p_3(t) &= \max\left\{-V'(h_3(t)), -\tfrac{2\beta}{h^2_3(t)}+\left|\alpha - V'(h_3(t)) + \tfrac{\beta}{h^2_3(t)}\right|, \right.\\
& \left.\quad -2 \alpha + V'(h_3(t)) + \left|\alpha - V'(h_3(t)) + \tfrac{\beta}{h^2_3(t)}\right|\right\}  \\
&= \max\left\{-V'(h_3(t)),~ \alpha - V'(h_3(t)) - \tfrac{\beta}{h^2_3(t)},~ -\alpha + \tfrac{\beta}{h^2_3(t)}\right\} \\
&= -\left(\alpha - \tfrac{\beta}{h^2_3(t)}\right) \leq -\left(\alpha - \tfrac{\beta}{h^2_{3,\min}}\right)
\end{align*}
and 
\[
q_3(t) = \left(\left(2 V(h_3(t)) - v^* - v_3(t)\right) V'(h_3(t)) + \tfrac{\beta}{h^2_3(t)} \left(2 v_3(t) - v^* - V(h_3(t))\right) \right)\sigma_2(t).
\]
Now, by Gr\"{o}nwall's inequality, we obtain
\begin{equation}\label{eq:F3_estimate}
\mathcal{F}_3(t) \leq \tfrac{\int_0^t \exp\left(\int_0^s p_3(\tau) \dd \tau\right) q_3(s) \dd s}{\exp\left(\int_0^t p_3(s) \dd s\right)} + \mathcal{F}_3(0) \exp\left(-\int_0^t p_3(s) \dd s\right).
\end{equation}
Since $p_3(t) \geq \alpha - \tfrac{\beta}{h^2_{3,\min}} > 0$ for all $t \geq 0$, we have $\exp\left(-\int_0^t p_3(s) \dd s\right) \xrightarrow{t \to \infty} 0$. For the first term on the right-hand side of \eqref{eq:F3_estimate}, we split the proof into the following cases:
\begin{enumerate}[label=(\roman*)]
  \item If the limit $\lim \limits_{t \to \infty} \int_0^t \exp\left(\int_0^s p_3(\tau) \dd \tau\right) q_3(s) \dd s$ does not blow up to $\pm \infty$, we can see that \[\tfrac{\int_0^t \exp\left(\int_0^s p_3(\tau) \dd \tau\right) q_3(s) \dd s}{\exp\left(\int_0^t p_3(s) \dd s\right)} \xrightarrow{t \to \infty} 0.\]
  \item If the limit $\lim \limits_{t \to \infty} \int_0^t \exp\left\{\int_0^s p_3(\tau) \dd \tau\right\} q_3(s) \dd s$ blows up to $\pm \infty$, then applying l'H\^opital's rule gives
      \begin{equation*}
      \lim_{t \to \infty} \tfrac{\int_0^t \exp\left(\int_0^s p_3(\tau) \dd \tau\right) q_3(s) \dd s}{\exp\left(\int_0^t p_3(s) \dd s\right)} = \lim_{t \to \infty} \tfrac{\exp\left(\int_0^t p_3(s) \dd s\right) q_3(t)}{\exp\left(\int_0^t p_3(s) \dd s\right) p_3(t)} = \lim_{t \to \infty} \tfrac{q_3(t)}{p_3(t)} = 0,
      \end{equation*}
      in which the last identity follows from the observations that $p_3(t) \geq \alpha - \tfrac{\beta}{h^2_{3,\min}} > 0$ for all $t \geq 0$ and $q_3(t) \xrightarrow{t \to \infty} 0$.
\end{enumerate}
Assembling these estimates together, we conclude that $\mathcal{F}_3(t) \xrightarrow{t \to \infty} 0$, from which the required convergence guarantees
\begin{equation*}
\lim\limits_{t \to \infty} v_3(t) = v^* ~\text{ and }~ \lim\limits_{t \to \infty} h_3(t) = \lim\limits_{t \to \infty} \left(x_2(t) - x_3(t) - l\right) = h^{*}.
\end{equation*}
follow immediately.
\end{proof}

\begin{remark}
\label{rem:generalization}
    Analogously to \cref{rem_well_posed_inifitely_cars}, the result of \cref{thm:generalization} can be generalized to the case for infinitely many vehicles. 
\end{remark}
\section{Conclusion and Future Work}
\label{sec: conclusion}
In this work, we established the well-posedness of the Bando-FtL model on an infinite time horizon by proving a uniform positive lower bound on the space headway of all followers, provided that the leader maintains a nonnegative velocity. We further investigated the model’s long-term behavior under the assumption that the leader travels at a constant speed. Our analysis also applies to configurations with infinitely many vehicles. As a next step, we seek to study the long-term behavior of the Bando-FtL model under weaker parameter assumptions. We also plan to examine the system’s asymptotic behavior when the leader's velocity undergoes oscillations rather than remaining constant. Ultimately, we aim to connect our results to equilibrium states or traveling-wave solutions to macroscopic traffic flow models obtained as many-particle limits of second-order microscopic models. 

\section*{Acknowledgements} 
This work was supported by the National Science Foundation under grant DMS-2418971.

\bibliographystyle{siamplain}
\bibliography{references}

\end{document}

%% file: definition.tex
\newcommand{\R}{\mathbb R}
\newcommand{\N}{\mathbb N}

\renewcommand{\phi}{\varphi}

\newcommand{\dd}{\ensuremath{\,\mathrm{d}}}

\renewcommand{\epsilon}{\varepsilon}



%% file: ex_shared.tex
\usepackage{lipsum}
\usepackage{amsfonts}
\usepackage{graphicx}
\usepackage{epstopdf}
\usepackage{algorithmic}
\ifpdf
  \DeclareGraphicsExtensions{.eps,.pdf,.png,.jpg}
\else
  \DeclareGraphicsExtensions{.eps}
\fi

\usepackage{enumitem}
\setlist[enumerate]{leftmargin=.5in}
\setlist[itemize]{leftmargin=.5in}

\newsiamremark{remark}{Remark}
\newsiamremark{hypothesis}{Hypothesis}
\crefname{hypothesis}{Hypothesis}{Hypotheses}
\newsiamthm{claim}{Claim}
\newsiamremark{fact}{Fact}
\crefname{fact}{Fact}{Facts}

\headers{long-term behavior of the ``Bando--follow-the-leader'' model}{F. Cao, X. Gong, and A. Keimer}

\title{An Example Article\thanks{Submitted to the editors DATE.
\funding{This work was funded by the Fog Research Institute under contract no.~FRI-454.}}}

\author{Dianne Doe\thanks{Imagination Corp., Chicago, IL 
  (\email{ddoe@imag.com}, \url{http://www.imag.com/\string~ddoe/}).}
\and Paul T. Frank\thanks{Department of Applied Mathematics, Fictional University, Boise, ID 
  (\email{ptfrank@fictional.edu}, \email{jesmith@fictional.edu}).}
\and Jane E. Smith\footnotemark[3]}

\usepackage{amsopn}
